\newtheorem{definition}{Definition}[section]
\newtheorem{theorem}{Theorem}[section]
\newtheorem{remark}{Remark}[section]
\newtheorem{example}{Example}[section]
\newtheorem{lemma}{Lemma}[section]
\newtheorem{proposition}{Proposition}[section]
\newtheorem{corollary}{Corollary}[section]
\title{On the geometry of punctual Hilbert schemes on singular curves and their motivic zeta functions }
\author{Mounir Hajli}
\address{School of Science, Westlake University\\
Hangzhou, China 310000}
\email{hajli@westlake.edu.cn}
\author{Hussein Mourtada}
\address{Universit\'e Paris Cit\'e, Sorbonne Universit\'e \\CNRS, IMJ-PRG, F-75013 Paris}
\email{hussein.mourtada@imj-prg.fr}
\author{Wenhao Zhu}
\address{Universit\'e Paris Cit\'e,  IMJ-PRG, F-75013 Paris}
\email{wenhao.zhu.alg@gmail.com}
\subjclass[2020]{14H20, 14C05, 14B05} 
\keywords{(Punctual) Hilbert scheme, curve singularities, Motivic Zeta functions}
\begin{document}
\maketitle

\begin{abstract}
Inspired by the work of Soma and Watari, we introduce a tree structure on the subsemimodules of the semigroup $\Gamma$ associated with an irreducible plane curve singularity $(C,O)$. Building on results of Oblomkov, Rasmussen, and Shende, we show that for certain classes of singularities this tree captures essential geometric information about the punctual Hilbert schemes of $(C,O)$. As an application, we derive an explicit formula for the motivic Hilbert zeta function of simple curve singularities.
\vskip 0.1cm
A point of the Hilbert scheme corresponds to an ideal of the local ring $\mathcal{O}_{C,O}$ of the singularity. We investigate the stratification of these Hilbert schemes arising from conditions on the minimal number of generators of the defining ideals, and we describe geometric features of the resulting strata, including their dimension and closure relations. We then study a bivariate motivic zeta function that naturally extends a zeta function with integer coefficients and that, in the case of a plane curve singularity, has recently been related to the topology of the link of the singularity.
\end{abstract}

\tableofcontents

\section{Introduction}
The main objects of study in this article are the punctual Hilbert schemes of irreducible curve singularities defined over an algebraically closed field $k$ of characteristic~$0$. 
Given a curve singularity $(C,O)$ and an integer $\ell \in \mathbb{N}$, the $\ell$-th punctual Hilbert scheme of $(C,O)$, denoted $C^{[\ell]}$, is the moduli space parametrizing $0$-dimensional subschemes of $(C,O)$ of length $\ell$. This is a special case of Grothendieck's Hilbert scheme, which more generally parametrizes subschemes of projective space with fixed Hilbert polynomial.\vskip 0.1cm
From an algebraic viewpoint, let $A = \mathcal{O}_{C,O}$ denote the local ring of $C$ at $O$. Then the $k$-points of $C^{[\ell]}$ are in bijection with ideals of colength $\ell$ in $A$:
\[
C^{[\ell]} := \left\{ I \subset A \mid I \text{ is an ideal and } \dim_k \frac{A}{I} = \ell \right\}.
\]\vskip 0.1cm
Let $\overline{A}$ denote the integral closure of $A$ in its total ring of fractions. Since $C$ is an irreducible curve singularity, $\overline{A}$ is a discrete valuation ring. The associated valuation is denoted by $v \colon \overline{A} \setminus \{0\} \to \mathbb{N}$, and we define the delta invariant of the singularity by
\[
\delta := \dim_k \frac{\overline{A}}{A}.
\]

Pfister and Steenbrink \cite{pfister1992reduced} proved that  
  the punctual Hilbert schemes $C^{[l]}$ can be embedded in a closed linear subvariety of the Grassmannian $Gr(\delta,\overline{A}/I(2\delta)),$ where $I(2\delta)$ is the ideal consisting of elements $h \in \overline{A}$ (or equivalently of $A$) whose valuation $v(h)$ is larger than or equal to $2\delta.$ Moreover, the authors "provided" the defining equations of this embedding. In general, even when the defining equations of punctual Hilbert schemes are known, it remains difficult to understand their geometry or to compute their invariants.\\

Progress in understanding the geometry of punctual Hilbert schemes has been relatively slow, due in part to the complexity of their structure even in simple singular cases. Soma and Watari \cite{soma2015punctual, soma2014punctual} studied the geometry of punctual Hilbert schemes for plane curve singularities of types $A_{2d}$, $E_6$, and $E_8$. In the case of unibranched curve singularities, the punctual Hilbert scheme $C^{[\ell]}$ coincides with the compactified Jacobian $\overline{J C}$ for sufficiently large $\ell$ \cite{piontkowski2007topology}.

For curves defined by the equation $y^m - x^n = 0$ with $\gcd(m,n) = 1$, Lusztig and Smelt \cite{lusztig1991fixed} computed the homology of the associated Hilbert schemes. Piontkowski \cite{piontkowski2007topology} later extended these results to curves with a single Puiseux pair. Subsequently, Gorsky, Mazin, and Oblomkov \cite{gorsky2022generic} derived explicit formulas for the Poincaré polynomials of Hilbert schemes associated with singularities having Puiseux exponents $(nd, md, md+1)$.

A key insight in this line of research is that the intersection of the compactified Jacobian $\overline{J C}$ with certain Schubert cells in the affine Grassmannian has the structure of an affine space. This geometric fact admits rich combinatorial interpretations, particularly in relation to $(q,t)$-Catalan theory \cite{gorsky2016affine, gorsky2013compactified, gorsky2014compactified, hilburn2023bfn}.\\

For curve singularities with a single Puiseux pair, Oblomkov, Rasmussen, and Shende \cite{oblomkov2018hilbert} proved that the punctual Hilbert scheme $C^{[\ell]}$ admits a stratification by affine spaces, a result that plays a foundational role in the motivic study of these moduli spaces.\\

Recently, punctual Hilbert schemes have attracted considerable attention from two distinct but interconnected perspectives, both of which provide key motivations for this work.

On one hand, these schemes are the fundamental building blocks for the \emph{motivic Hilbert zeta function}, introduced by Bejleri, Ranganathan and Vakil ~\cite{bejleri2020motivic}. This zeta function is defined as the generating series of the classes of punctual Hilbert schemes in the Grothendieck ring of varieties:
\[
Z^{\mathrm{Hilb}}_{X}(q) = \sum_{\ell \geq 0} \big[\mathrm{Hilb}^{[\ell]}(X)\big]q^\ell \in 1+q K_0(\mathrm{Var}_k)[[q]].
\]

Compared to the classical motivic zeta function of Kapranov~\cite{kapranov2000elliptic}, the motivic Hilbert zeta function provides a finer invariant, capturing detailed information about the singularities of a variety $X$ defined over an algebraically closed field $k$.

On the other hand, a series of conjectures by Oblomkov, Rasmussen, and Shende~\cite{oblomkov2012hilbert, oblomkov2018hilbert} posits a deep connection between the geometry of punctual Hilbert schemes of plane curve singularities and the topology of associated algebraic links, when working over $\mathbb{C}$. More precisely, they conjecture that the (reduced) Khovanov--Rozansky homology of the link of the singularity can be recovered from the homology of the Hilbert schemes. Maulik~\cite{maulik2016stable} proved the first of these conjectures using an inductive argument based on blow-ups, analyzing the variation of both Hilbert schemes and the HOMFLY polynomial under such transformations.

Very recently, Rossinelli~\cite{rossinelli2024motivic} discovered a new connection between \emph{curvilinear} punctual Hilbert schemes and the Igusa motivic zeta function, further highlighting the role of Hilbert schemes as a unifying object in singularity theory and motivic integration.  Diaconescu, Porta, Sala and Vosoughinia \cite{diaconescu2023flops} made a progress in the case of space curve singularities. Fasola, Graffeo, Lewanski and Ricolfi \cite{fasola2025invariants} computed the motivic classes of punctual nested Hilbert schemes on a smooth pointed surface.  Graffeo,  Monavari, Moschetti, and  Ricolfi \cite{graffeo2024motive} proved a closed formula for the generating function of the motives $[\mathrm{Hilb}^d(\mathbb{A}^n)_0] \in K_0(\mathrm{Var}_{\mathbb{C}})$ of punctual Hilbert schemes, summing over $n$, for fixed $d > 0$.

\vskip 0.5cm

The main goal of this paper is to describe certain geometric aspects of the punctual Hilbert schemes associated with curve singularities. We focus on two classes: first, plane curve singularities defined by equations of the form $y^m - x^n = 0$ with $\gcd(m,n) = 1$; and second, (not necessarily plane) curve singularities whose semigroup $\Gamma := v(A\setminus  \{0\})$ is monomial.

The latter class consists of singularities for which, within their equisingularity class—i.e., the set of curve singularities sharing the same semigroup—there exists a unique representative up to analytic isomorphism (see Section \ref{Appendix} for futher details). This class was originally introduced by Pfister and Steenbrink~\cite{pfister1992reduced}, and plays a distinguished role in the classification of unibranch curve singularities due to its rigidity under deformation.\\

To achieve this, we make extensive use of $\Gamma$-subsemimodules, where $\Gamma$ is the semigroup of an irreducible curve singularity. A \emph{$\Gamma$-subsemimodule} is a subset $\Delta \subset \Gamma$ such that $\Gamma + \Delta \subseteq \Delta$. It follows directly from the properties of ideals and the valuation $v$ that, for any ideal $I \subset A$, the value set 
\[
v(I\setminus \{0\}) = \{ v(f) \mid f \in I \setminus \{0\}\}
\]
is a $\Gamma$-subsemimodule of $\Gamma$. This fundamental observation links the algebraic structure of ideals in $\mathcal{O}_{C,O}$ to the combinatorics of the semigroup $\Gamma$.

Inspired by recent work of Soma and Watari  \cite{soma2014punctual} on the geometry of Hilbert schemes, as well as that of Oblomkov, Rasmussen, and Shende \cite{oblomkov2018hilbert} on knot invariants and curve singularities, we associate to a curve singularity with semigroup $\Gamma$ a leveled graph $G_\Gamma$ defined as follows.

The vertices at level $\ell$ are the elements of the set
\[
\mathscr{D}_\ell := \left\{ \Delta \subset \Gamma \mid \Delta \text{ is a } \Gamma\text{-subsemimodule with } \#(\Gamma \setminus \Delta) = \ell \right\}.
\]
We draw a directed edge from $\Delta \in \mathscr{D}_\ell$ to $\Delta' \in \mathscr{D}_{\ell-1}$ if 
\[
\Delta' = m(\Delta) := \Delta \cup \{\gamma_\Delta\}, \quad \text{where} \quad \gamma_\Delta := \max(\Gamma \setminus \Delta).
\]

We study  the graph $G_\Gamma$ and prove that it has the structure of a tree (Theorem \ref{Tree}): the important thing about this theorem is that it explains how to obtain all $\Gamma-$submodules. Now, each subsemimodule $\Delta \in \mathscr{D}_\ell$ defines a constructible subset $C^{[\Delta]} \subseteq C^{[\ell]}$, consisting of all ideals $I \subset A$ in $C^{[\ell]}$ such that $v(I\setminus \{0\}) = \Delta$. One of the main result of the paper, is that, for certain curve singularities  an edge of the graph induces a peicewise trivial fibration (the proof uses the work in \cite{oblomkov2018hilbert} regarding the defining equations of $C^{[\Delta]}$); More precisely, we have the following theorem:

\begin{theorem}[Main Theorem]\label{thm:fibration}(see also Theorem \ref{piecewise fibration}) 
Let $C$ be either a plane curve singularity defined by $x^p - y^q = 0$ with $\gcd(p,q) = 1$, 
 or a curve singularity with a monomial semigroup $\Gamma$. Let $\Delta$ be a $\Gamma$-subsemimodule of $\Gamma$, and let $\gamma_\Delta = \max(\Gamma \setminus \Delta)$. Then there exists a canonical morphism
\[
\pi_{\Delta} \colon C^{[\Delta]} \longrightarrow C^{[m(\Delta)]},
\]
which is isomorphic to a trivial fibration over its image. The fiber of $\pi$ is isomorphic to an affine space $\mathbb{A}^{B(\Delta)}$, where
\[
B(\Delta) = \# \left\{ \gamma_i \mid \gamma_i < \gamma_\Delta \right\}
\]
i.e. is number of minimal generators below $\gamma_\Delta$,
and the $\gamma_i$ range over the minimal generators of $\Delta$ as a $\Gamma$-subsemimodule.
\end{theorem}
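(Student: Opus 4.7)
The plan is to identify $\pi_\Delta$ with a coordinate projection via the Oblomkov--Rasmussen--Shende normal form for ideals, and then argue that the $B(\Delta)$ coordinates being forgotten are unconstrained. In the cases considered, $A = \mathcal{O}_{C,O}$ admits a $k$-basis $\{t^\gamma : \gamma \in \Gamma\}$, and every ideal $I$ with $v(I\setminus\{0\}) = \Delta$ admits a unique generating system in normal form
\[
f_{\gamma_i} = t^{\gamma_i} + \sum_{\alpha \in \Gamma \setminus \Delta,\ \alpha > \gamma_i} c_{i,\alpha}\, t^\alpha,
\]
indexed by the minimal generators $\gamma_1 < \cdots < \gamma_r$ of $\Delta$. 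The coefficients $\{c_{i,\alpha}\}$ embed $C^{[\Delta]}$ as a locally closed subvariety of an affine space $\mathbb{A}^N$, cut out by polynomial relations coming from the minimal syzygies $\gamma_i + x = \gamma_j + y = \sigma$: the element $t^x f_{\gamma_i} - t^y f_{\gamma_j}$ must lie in $(f_{\gamma_k})_k$, and the Gr\"obner-type reduction of this element yields equations on the coefficients at each $\beta \in \Gamma \setminus \Delta$. The analogous procedure embeds $C^{[m(\Delta)]}$ in $\mathbb{A}^{N - B(\Delta)}$; the missing coordinates are exactly $\{c_{i,\gamma_\Delta} : \gamma_i \text{ a minimal generator of } \Delta,\ \gamma_i < \gamma_\Delta\}$, because $\gamma_\Delta$ is an admissible index $\alpha$ for $\Delta$ but not for $m(\Delta)$, while the lists of minimal generators of $\Delta$ and $m(\Delta)$ below $\gamma_\Delta$ coincide.

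Next, I would construct $\pi_\Delta$ by $\pi_\Delta(I) := I + (u)$ with $u \in A$ a fixed element of valuation $\gamma_\Delta$ (take $u = t^{\gamma_\Delta}$). Since $\gamma_\Delta = \max(\Gamma \setminus \Delta)$, every element of $(\gamma_\Delta + \Gamma) \setminus \{\gamma_\Delta\}$ strictly exceeds $\gamma_\Delta$ and hence lies in $\Delta$, so $v(I + (u)) = \Delta \cup \{\gamma_\Delta\} = m(\Delta)$ and $\pi_\Delta$ indeed lands in $C^{[m(\Delta)]}$. A direct check in normal form shows: for each minimal generator $\gamma_i < \gamma_\Delta$ of $\Delta$, the reduction $g_{\gamma_i} := f_{\gamma_i} - c_{i,\gamma_\Delta}\, u$ is the normal-form generator of $\pi_\Delta(I)$ with coordinates $(c_{i,\alpha})_{\alpha \neq \gamma_\Delta}$; the new generator $g_{\gamma_\Delta} = u$ carries no free coordinates; and any minimal generator $\gamma_j \geq \gamma_\Delta$ of $\Delta$ either survives in $m(\Delta)$ without coordinates or is absorbed into $u\cdot A$. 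Hence, on the ambient affine coordinate spaces, $\pi_\Delta$ is the projection $(c_{i,\alpha}) \mapsto (c_{i,\alpha})_{\alpha \neq \gamma_\Delta}$ that erases exactly the $B(\Delta)$ distinguished coordinates.

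The remaining step, and the principal technical obstacle, is to show that the defining equations of $C^{[\Delta]}$ inside $\mathbb{A}^N$ never involve any of these new coordinates $c_{i,\gamma_\Delta}$; once this is established, the equations depend only on the remaining coordinates, coincide (after relabeling) with those defining $C^{[m(\Delta)]}$, and yield $C^{[\Delta]} \cong Z \times \mathbb{A}^{B(\Delta)}$ with $Z \subseteq C^{[m(\Delta)]}$ the image of $\pi_\Delta$, proving the trivial $\mathbb{A}^{B(\Delta)}$-fibration. The coefficient $c_{i,\gamma_\Delta}$ initially contributes to the syzygy element $t^x f_{\gamma_i} - t^y f_{\gamma_j}$ at the exponent $t^{x+\gamma_\Delta}$; because $x > 0$ for any nontrivial minimal syzygy, $x + \gamma_\Delta > \gamma_\Delta = \max(\Gamma \setminus \Delta)$ lies in $\Delta$ and is reducible rather than constrained. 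Since each reduction step strictly increases the exponent of a term, any subsequent appearance of a factor $c_{i,\gamma_\Delta}$ (including the products $c_{i,\gamma_\Delta}\cdot c_{k,\alpha'}$ produced when the tail of another $f_{\gamma_k}$ is used in reduction) remains at exponents $> \gamma_\Delta$, hence in $\Delta$. The delicate point is to rule out the scenario in which a reduction is forced to use some $f_{\gamma_k}$ with $\gamma_k \leq \gamma_\Delta$ and trivial shift $\lambda = 0$ at a term $t^{\gamma_k}$ (which would produce $c_{k,\gamma_\Delta}$ at the forbidden exponent $\gamma_\Delta$); the hypothesis that $C$ is defined by $y^p = x^q$ or has a monomial semigroup enters precisely here, via the explicit syzygy description due to Oblomkov--Rasmussen--Shende in the $(p,q)$-case and its extension to monomial semigroups in Section~\ref{Appendix}, which guarantees the existence of an alternative reduction path using some $\gamma_\ell < \gamma_k$. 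This yields the trivial fibration structure over the image of $\pi_\Delta$.
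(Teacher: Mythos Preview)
Your overall strategy matches the paper's: both use the Oblomkov--Rasmussen--Shende normal form to realize $C^{[\Delta]}$ inside $\operatorname{Gen}_\Delta$, identify $\pi_\Delta$ with the projection forgetting the $B(\Delta)$ coordinates $\lambda_j^{\gamma_\Delta-\gamma_j}$, and then compare the syzygy equations for $\Delta$ and $m(\Delta)$. However, two points in your write-up do not go through as stated.

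First, the claim that the defining equations of $C^{[\Delta]}$ ``coincide (after relabeling) with those defining $C^{[m(\Delta)]}$'' is false in the $(p,q)$ case. The paper's analysis (Theorem~\ref{piecewise fibration}) shows that $C^{[\Delta]}$ carries strictly \emph{more} linear constraints than $C^{[m(\Delta)]}$: in Case~1 the ideal for $C^{[\Delta]}$ is generated by $L_i^r$ for $1\le r\le c(\Delta)-1-\sigma_i$, whereas for $C^{[m(\Delta)]}$ only $1\le r\le c(\Delta)-2-\sigma_i$ together with $\lambda_j^{\gamma_\Delta-\gamma_j}=0$. The image of $\pi_\Delta$ is thus the \emph{proper} closed subvariety of $C^{[m(\Delta)]}$ cut out by the extra top-degree linear forms $L_i^{c(\Delta)-1-\sigma_i}$; see the example $\Gamma=\langle 4,7\rangle$, $\Delta=(8,11)_\Gamma$, where the image is $\{\lambda_1^6+(\lambda_2^3)^2=0\}\subsetneq \mathbb{A}^2$. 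Your claim is correct only for monomial semigroups, where Corollary~\ref{syz large that c in monomial semigroup} kills all syzygy constraints and both strata are full affine spaces.

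Second, your resolution of the ``delicate point'' is where the real work lies, and the appeal to an ``alternative reduction path'' is not how the paper proceeds. The difficulty is that $\lambda_k^{\gamma_\Delta-\gamma_k}$ can in principle appear \emph{nonlinearly} in an equation $Eq_i^r$ via products with $\nu$-variables. The paper does not argue term-by-term that such contributions vanish; instead it invokes the key structural fact from \cite{oblomkov2018hilbert} that the linear parts $L_i^r$ are linearly independent, so the full zero locus is isomorphic to the linear one. One then checks directly that no $\lambda_j^{\gamma_\Delta-\gamma_j}$ appears in any relevant $L_i^r$ (since $u_i^j\ne 0$ forces $\sigma_i>\gamma_j$, hence $r+\sigma_i>\gamma_\Delta\ge c(\Delta)-1$ when $r=\gamma_\Delta-\gamma_j$). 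The paper also splits into two cases according to whether $c(m(\Delta))<c(\Delta)$ or $c(m(\Delta))=c(\Delta)$, a distinction your sketch does not make but which is needed to align the ambient parameter spaces correctly.
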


We explicitly determine the image of this morphism. In the case of plane curve singularities defined by $x^p - y^q = 0$ with $\gcd(p,q)=1$, or more generally for singularities with monomial semigroups, this allows us to recover a key result from \cite{oblomkov2018hilbert}: $C^{[\Delta]}$ is an affine space, and its dimension is completely determined by the combinatorial invariants of the $\Gamma$-subsemimodule $\Delta$.

We also analyze an example of a plane curve singularity with two Puiseux pairs (see Example \ref{fibration for 6 9 19}). In this case, the morphism \[\pi_{\Delta}: C^{[\Delta]} \longrightarrow C^{[m(\Delta)]}\] still induces a piecewise trivial fibration, but the dimension of the fiber is not solely governed by the number of minimal generators below $\gamma_\Delta$.  New phenomenas arise and must be further studied to fully understand the geometry of the strata and the structure of this morphism; we think that it is a fibration. \\

One important application of Theorem~\ref{thm:fibration} is the development of an algorithm to compute the motivic Hilbert zeta function for irreducible curve singularities, as introduced in \cite{bejleri2020motivic}:
\[
Z_{(C, O)}^{\mathrm{Hilb}}(q) := 1 + \sum_{\ell=1}^{\infty} \big[C^{[\ell]}\big] \, q^{\ell} \in K_0({\mathrm{Var}}_{{\mathbb{C}}})[[q]],
\]
where $K_0({\mathrm{Var}}_{{\mathbb{C}}})$ denotes the Grothendieck ring of varieties over ${\mathbb{C}}$. 
The recursive structure of the tree $G_\Gamma$, combined with the piecewise trivial fibration property established in Theorem~\ref{thm:fibration}, allows us to express each class $\big[C^{[\ell]}\big]$ in terms of classes of strata $C^{[\Delta]}$ and their fibrations, leading to an effective recursive computation.

\begin{theorem}\label{thm:zeta-formulas}(see Theorem \ref{thm:E6E8})
For the simple singularities of types $A_{2d}$, $E_6$, $E_8$, $W_8$, and $Z_{10}$, the motivic Hilbert zeta function is given by the following \textbf{rational} expressions in $K_0({\mathrm{Var}}_{{\mathbb{C}}})[[q]]$:

\[
Z_{(C_{A_{2d}}, O)}^{{\mathrm{Hilb}}}(q) = \frac{1 - (\mathbb{L} q^2)^{d+1}}{(1 - q)(1 - \mathbb{L} q^2)}
\]

\[
Z_{(C_{E_6}, O)}^{{\mathrm{Hilb}}}(q) = \frac{1 + \mathbb{L} q^2 + \mathbb{L}^2 q^3 + \mathbb{L}^2 q^4 + \mathbb{L}^3 q^6}{1 - q}
\]

\[
Z_{(C_{E_8}, O)}^{{\mathrm{Hilb}}}(q) = \frac{1 + \mathbb{L} q^2 + \mathbb{L}^2 q^3 + \mathbb{L}^2 q^4 + \mathbb{L}^3 q^5 + \mathbb{L}^3 q^6 + \mathbb{L}^4 q^8}{1 - q}
\]

\[
Z_{(C_{W_8}, O)}^{{\mathrm{Hilb}}}(q) = \frac{1 + \mathbb{L} q^2 + 2\mathbb{L}^2 q^3 + \mathbb{L}^3 q^4 + \mathbb{L}^3 q^5 + (\mathbb{L}^3 + \mathbb{L}^4) q^6 + \mathbb{L}^4 q^8}{1 - q}
\]

\[
Z_{(C_{Z_{10}}, O)}^{{\mathrm{Hilb}}}(q) = \frac{1 + (\mathbb{L} + \mathbb{L}^2) q^2 + \mathbb{L}^2 q^3 + 2\mathbb{L}^3 q^4 + \mathbb{L}^3 q^5 + 2\mathbb{L}^4 q^6 + (\mathbb{L}^4 + \mathbb{L}^5) q^8 + \mathbb{L}^5 q^{10}}{1 - q}
\]
where $\mathbb{L} = [\mathbb{A}^1]$ 
 denotes the Lefschetz motive in $K_0({\mathrm{Var}}_{{\mathbb{C}}})$. 
\end{theorem}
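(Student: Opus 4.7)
The plan is to apply Theorem \ref{thm:fibration} recursively via the tree $G_\Gamma$ to express each class $[C^{[\ell]}]$ as an $\mathbb{L}$-polynomial, and then to collect these into the stated rational generating series. First I would identify the semigroups: $\Gamma = \langle 2, 2d+1 \rangle$ for $A_{2d}$, $\langle 3, 4 \rangle$ for $E_6$, $\langle 3, 5 \rangle$ for $E_8$, and the analogous semigroups for $W_8$ and $Z_{10}$. The first three cases are defined by equations $x^p - y^q$ with $\gcd(p,q) = 1$ and are therefore governed directly by the Main Theorem.

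Next, for each $\Delta \in \mathscr{D}_\ell$, I would compute $[C^{[\Delta]}]$ by walking up the unique path in $G_\Gamma$ from $\Delta$ to the root $\Gamma$; each edge $\Delta' \to m(\Delta')$ contributes a factor $\mathbb{L}^{B(\Delta')}$ by Theorem \ref{thm:fibration}, so $[C^{[\Delta]}] = \mathbb{L}^{d(\Delta)}$ for an explicit combinatorial exponent $d(\Delta)$. Summing over $\Delta \in \mathscr{D}_\ell$ yields $[C^{[\ell]}]$ as a finite sum of powers of $\mathbb{L}$. The closed-form rational expressions arise from a stabilization phenomenon: once $\ell$ exceeds a threshold tied to the conductor $2\delta$, the strata at level $\ell$ are in a natural class-preserving bijection with those at level $\ell - 1$, so $[C^{[\ell]}] = [C^{[\ell-1]}]$ for all large $\ell$. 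This produces the denominator $1 - q$, and the numerator polynomial in $q$ and $\mathbb{L}$ records the finitely many contributions from the low-level strata, enumerated directly using the tree.

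The main obstacle is the analysis of $W_8$ and $Z_{10}$, whose singularities have two Puiseux pairs and lie just outside the direct scope of the Main Theorem. As illustrated in Example \ref{fibration for 6 9 19}, the fiber of $\pi_\Delta$ need no longer be affine of dimension $B(\Delta)$, so one must work with the defining equations of $C^{[\Delta]}$ after \cite{oblomkov2018hilbert} and split each stratum ``by hand'' into affine pieces. This is precisely where the anomalous numerator coefficients $2\mathbb{L}^2 q^3$ (in $W_8$) and $(\mathbb{L}+\mathbb{L}^2) q^2$, $2\mathbb{L}^3 q^4$, $2\mathbb{L}^4 q^6$ (in $Z_{10}$) arise; verifying them is the combinatorial heart of the theorem. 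A final consistency check would be a sanity test on $A_2$, where one should recover $[C^{[\ell]}] = 1 + \mathbb{L}$ for $\ell \geq 2$, matching the expansion of $\frac{1 + \mathbb{L} q^2}{1-q}$.
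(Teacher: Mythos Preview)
Your overall strategy---enumerate the tree $G_\Gamma$, compute each $[C^{[\Delta]}]$ as a power of $\mathbb{L}$, sum over $\mathscr{D}_\ell$, and observe stabilisation for $\ell\geq c$---is exactly what the paper does. But your diagnosis of the ``main obstacle'' is mistaken, and this matters because it is the only place where you think nontrivial work is required.

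The singularities $W_8$ and $Z_{10}$ do \emph{not} have two Puiseux pairs and are \emph{not} analogous to Example~\ref{fibration for 6 9 19}. Their local rings are $\mathbb{C}[[t^4,t^5,t^6]]$ and $\mathbb{C}[[t^4,t^6,t^7]]$, with semigroups $\Gamma=\langle 4,5,6\rangle=\{0\}\cup[4,6]\cup[8,\infty)$ and $\Gamma=\langle 4,6,7\rangle=\{0,4\}\cup[6,8]\cup[10,\infty)$. These are precisely the monomial semigroups $\Gamma_{4,3}$ and $\Gamma_4$ of Section~\ref{Appendix}, so Corollary~\ref{cor:monomial_case_grothendieck} applies: each $\pi_\Delta$ is a genuinely surjective trivial $\mathbb{A}^{B(\Delta)}$-fibration, and no ad~hoc splitting of strata is needed. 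The paper treats $W_8$ and $Z_{10}$ by the \emph{same} direct enumeration as $E_6$ and $E_8$; the coefficients you flag as ``anomalous'' (e.g.\ $2\mathbb{L}^2q^3$ for $W_8$) simply reflect that $\mathscr{D}_3$ contains two semimodules with $N(\Delta)=2$, nothing deeper.

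A secondary imprecision: in the $(p,q)$ case your edge-by-edge recursion $[C^{[\Delta]}]=\mathbb{L}^{B(\Delta)}[C^{[m(\Delta)]}]$ is not literally what Theorem~\ref{thm:fibration} gives, since $\pi_\Delta$ is only a fibration over its \emph{image}, a possibly proper linear slice of $C^{[m(\Delta)]}$. The correct route is to invoke Theorem~\ref{dim of Hilbert scheme} (i.e.\ $C^{[\Delta]}\cong\mathbb{A}^{N(\Delta)}$) directly and read off $N(\Delta)$ from the combinatorics; the tree is then used only to enumerate $\mathscr{D}_\ell$.
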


For the singularities of types $A_{2d}$, $E_6$, and $E_8$, these formulas were independently obtained by Watari~\cite{watari2024motivic}, using a different approach based on explicit geometric analysis and results from~\cite{soma2014punctual, soma2015punctual}. Our method, based on the tree structure $G_\Gamma$ and piecewise trivial fibrations, provides a unified and algorithmic derivation that extends naturally to more general singularities, including $W_8$ and $Z_{10}$.\\

In Section \ref{Fixed-number of generators}, in relation to the conjectures of Oblomkov, Rasmussen, Shende, and Maulik~\cite{oblomkov2012hilbert, oblomkov2018hilbert, maulik2016stable}, we study the defining equations of certain stratifications of the punctual Hilbert schemes. For a given $\Gamma$-subsemimodule $\Delta$ and an integer $m \in \mathbb{N}$, we consider the  \[
C^{[\Delta], \leq m} \subseteq C^{[\Delta]},
\]
whose closed points correspond to ideals $I \subset A = \mathcal{O}_{C,O}$ such that $v(I\setminus \{0\}) = \Delta$ and $I$ can be minimally generated by at most $m$ elements. Note that $C^{[\Delta], \leq m} $ is a constructible set, see Proposition \ref{decomposition of C Delta m}.

To describe this subset geometrically, we introduce a stratification defined in terms of \emph{syzygies over $\Gamma$} among the minimal generators of $\Delta$. Specifically, for each subset $\underline{i} = \{i_1, \dots, i_m\} \subseteq \{1, \dots, n\}$ of indices corresponding to a choice of $m$ minimal generators of $\Delta$, and for each combinatorial type of syzygy $\underline{j}$ (encoding dependencies among the generators), we define a constructible subset 
\[
Y_{\underline{i}_{\underline{j}}} \subseteq C^{[\Delta], \leq m}.
\]
These subsets are defined by explicit algebraic equations and inequalities derived from the semigroup structure and the valuations of generators and their relations.

We prove the following structure theorem:
\begin{theorem}\label{thm:stratification}
The set $C^{[\Delta], \leq m}$ admits a finite decomposition
\[
C^{[\Delta], \leq m} = \bigcup_{\substack{\underline{i} \subseteq \{1,\dots,n\} \\ |\underline{i}| = m}} \bigcup_{\underline{j}} Y_{\underline{i}_{\underline{j}}},
\]
where each $Y_{\underline{i}_{\underline{j}}}$ is a locally closed constructible subset defined by syzygy conditions on the generators indexed by $\underline{i}$.

Moreover, for an irreducible curve singularity with semigroup $\Gamma = \langle p, q \rangle$ with $\gcd(p,q)=1$, each stratum $Y_{\underline{i}_{\underline{j}}}$ is isomorphic to a fibred product
\[\label{eq:Yij-geometry}
Y_{\underline{i}_{\underline{j}}} \cong ({\mathbb{C}}^*)^{n - m} \times_{\operatorname{Spec}{\mathbb{C}}} \mathbb{A}^{N(\Delta) - n + m},
\]
where $n$ is the number of minimal generators of $\Delta$ as a $\Gamma$-subsemimodule, and $N(\Delta)$ is the dimension of  $C^{[\Delta]}$ which is an affine space  proven in  \parencite[Theorem 13]{oblomkov2018hilbert}.  
\end{theorem}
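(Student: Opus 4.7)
The plan is to translate the generator-count condition for ideals of $\mathcal O_{C,O}$ into explicit combinatorial data on the semigroup $\Gamma$, and then realize each combinatorial type as a locally closed algebraic subset of the affine space $C^{[\Delta]}$ already controlled by \cite{oblomkov2018hilbert}.

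\textbf{Setup and first reductions.} Let $\gamma_1<\gamma_2<\cdots<\gamma_n$ be the minimal generators of $\Delta$ as a $\Gamma$-subsemimodule. For any $I\in C^{[\Delta]}$ there is a canonical standard basis $f_1,\dots,f_n$ with $v(f_i)=\gamma_i$, uniquely determined up to the parameters that coordinatize $C^{[\Delta]}$ in the ORS picture. An ideal $I$ lies in $C^{[\Delta],\leq m}$ precisely when, after discarding $n-m$ of the $f_i$, the remaining $m$ elements still generate $I$. So I would first fix an index set $\underline i\subseteq\{1,\dots,n\}$ with $|\underline i|=m$ and ask: when can each $f_k$ with $k\notin\underline i$ be written as
\[
f_k=\sum_{i\in\underline i} h_{k,i}\,f_i,\qquad h_{k,i}\in A.
\]
Because $v(f_i)=\gamma_i$ and $v(h_{k,i})\in\Gamma$, such a relation forces, at the level of leading valuations, an expression $\gamma_k=\alpha_{k,i}+\gamma_i$ with $\alpha_{k,i}\in\Gamma$ for some $i\in\underline i$ realizing the minimum. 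The combinatorial datum $\underline j$ records, for each $k\notin\underline i$, which subset of the $\gamma_i$ ($i\in\underline i$) participates nontrivially (i.e.\ achieves $v(h_{k,i}f_i)=\gamma_k$) together with the choices of $\alpha_{k,i}$. There are finitely many such $\underline j$ because each $\alpha_{k,i}$ is constrained to lie in the finite set $\{\gamma_k-\gamma_i\}\cap\Gamma$.

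\textbf{Construction of the strata.} For each pair $(\underline i,\underline j)$, I would define $Y_{\underline i_{\underline j}}\subseteq C^{[\Delta],\leq m}$ as the locus where the above syzygy relations hold with exactly the prescribed pattern $\underline j$. Writing $f_i$ in the ORS coordinates on $C^{[\Delta]}$, the existence of a relation of type $\underline j$ becomes an equation in the coefficients: collecting the coefficient of every monomial of valuation $<\gamma_k$ in $f_k-\sum h_{k,i}f_i$ yields a finite system of polynomial equations (vanishing conditions) and open conditions (the coefficients realizing the leading valuation must not vanish). This is manifestly locally closed, and the assertion that the union over $(\underline i,\underline j)$ covers $C^{[\Delta],\leq m}$ is just the reformulation made above. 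Proposition~\ref{decomposition of C Delta m} (already cited) then gives constructibility of the whole stratification.

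\textbf{Geometry in the $\langle p,q\rangle$ case.} Here I would use the Oblomkov--Rasmussen--Shende parametrization, which realizes $C^{[\Delta]}$ as $\mathbb A^{N(\Delta)}$, with coordinates that can be organized according to the $n$ minimal generators. For a fixed $(\underline i,\underline j)$, the syzygy relations expressing each $f_k$ ($k\notin\underline i$) as a combination of the $f_i$ amount, after matching coefficients, to solving for $n-m$ of the ORS coordinates as polynomial functions of the others, subject to $n-m$ nonvanishing conditions (one per eliminated generator, coming from the leading term that implements the syzygy). Thus $n-m$ of the original affine coordinates are eliminated and replaced by $n-m$ free parameters in $\mathbb C^*$, while the remaining $N(\Delta)-(n-m)$ coordinates are unrestricted. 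This yields an explicit isomorphism
\[
Y_{\underline i_{\underline j}}\;\cong\;(\mathbb C^*)^{n-m}\times_{\operatorname{Spec}\mathbb C}\mathbb A^{N(\Delta)-n+m},
\]
proving the second part of the theorem.

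\textbf{Main obstacle.} The routine parts are the enumeration of syzygy types and the extraction of polynomial equations from the leading-term analysis. The genuinely delicate step is to verify, in the $\langle p,q\rangle$ case, that the ORS coordinates can really be split cleanly into the $(n-m)$ ``syzygy-controlling'' and $(N(\Delta)-n+m)$ ``free'' coordinates so that the solving of syzygies produces a triangular (hence invertible) change of variables rather than some implicit variety; this requires exploiting the numerical specificity of $\Gamma=\langle p,q\rangle$ (the unique-Puiseux-pair structure) together with the affine-cell description of \cite{oblomkov2018hilbert}, and is where the proof will have to work hardest.
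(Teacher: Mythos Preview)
Your leading-term analysis is where the argument breaks. You write $f_k=\sum_{i\in\underline i}h_{k,i}f_i$ and then claim this forces $\gamma_k=\alpha_{k,i}+\gamma_i$ for some $i\in\underline i$ ``realizing the minimum''. But $\gamma_k$ is a \emph{minimal} generator of $\Delta$, so by definition $\gamma_k\notin\gamma_i+\Gamma$ for any $i\neq k$; the equality $v(h_{k,i}f_i)=\gamma_k$ you are asking for is therefore impossible. What actually happens on the right-hand side is that the terms of smallest valuation cancel: there must be at least two indices $i,i'\in\underline i$ with $v(h_{k,i})+\gamma_i=v(h_{k,i'})+\gamma_{i'}=:\sigma<\gamma_k$, i.e.\ a genuine syzygy of the sub-semimodule $(\gamma_i:i\in\underline i)_\Gamma$, and one must arrange that after this cancellation the surviving term has valuation exactly $\gamma_k$. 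Consequently your combinatorial datum $\underline j$ is the wrong one: it should not record ``which $\gamma_i$ achieve $v(h_{k,i}f_i)=\gamma_k$'' (an empty set) but rather a choice of syzygy $\sigma$ with $\sigma<\gamma_k$.

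The paper organizes this correctly and, importantly, \emph{sequentially}: order the discarded indices as $i'_1<\cdots<i'_{n-m}$, and at step $e$ choose an element $(\gamma_{i_{j_e}},\widehat{\gamma_{i_{j_e}}},\sigma_{i_{j_e}})$ in the augmented syzygy set of $(\gamma_{i_1},\dots,\gamma_{i_m},\gamma_{i'_1},\dots,\gamma_{i'_{e-1}})_\Gamma$ with $\sigma_{i_{j_e}}<\gamma_{i'_e}$. The conditions defining $Y_{\underline i_{\underline j}}$ are then $\mathcal G_\lambda^{(e)}\circ\mathcal S_\nu^{(e)}\equiv 0\bmod t^{\gamma_{i'_e}}$ together with the single open condition that the coefficient of $\phi_{\gamma_{i'_e}}$ be nonzero. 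For the $\langle p,q\rangle$ geometry, the paper does not simply assert a triangular change of variables; it compares, term by term, the equation $(Eq^{(e)})^r_{i_{j_e}}$ with the corresponding equation $(Eq^{\Delta})^r_{i_{j_e}}$ already cutting out $C^{[\Delta]}$, and checks (using $\Delta_{>\sigma_{i_{j_e}},<\gamma_{i'_e}}\subset(\gamma_{i_1},\dots,\gamma_{i_m},\gamma_{i'_1},\dots,\gamma_{i'_{e-1}})_\Gamma$) that the only new constraint on the $\lambda$-coordinates is the inequality at level $\gamma_{i'_e}$. That is what gives exactly one $\mathbb C^*$-factor per discarded generator and no extra closed conditions. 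Your sketch would need to be rewritten around this cancellation mechanism before the counting in the second part can be justified.
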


Moreover, in the case of an irreducible curve singularity with semigroup $\Gamma = \langle p, q \rangle$ (where $\gcd(p,q) = 1$), we can explicitly determine the intersections of the strata $Y_{\underline{i}_{\underline{j}}}$. These intersections are governed by refinements of syzygy conditions and encode the combinatorial dependencies among different minimal generating sets. This geometric understanding provides a foundation for computing a refined motivic invariant that tracks not only the colength of ideals but also their minimal number of generators.

To this end, for $m, \ell \in \mathbb{N}$, let $C^{[\ell],m} \subseteq C^{[\ell]}$ denote the constructible subset consisting of ideals $I \subset A = \mathcal{O}_{C,O}$ of colength $\ell$ that require exactly $m$ minimal generators. We define the \emph{generalized motivic Hilbert zeta function}:

\begin{align}
Zm^{\mathrm{Hilb}}_{(C,O)}(a^2, q^2) 
&= \sum_{\ell \geq 0} \sum_{m \geq 1} q^{2\ell} (1 - a^2)^{m-1} \big[C^{[\ell],m}\big]\nonumber  \\
&= \sum_{\ell \geq 0} \sum_{\Delta \in \mathscr{D}_\ell} \sum_{m \geq 1} q^{2\ell} (1 - a^2)^{m-1} \big[C^{[\Delta],m}\big], \nonumber
\end{align}
where the second sum runs over all $\Gamma$-subsemimodules $\Delta$ with $\#(\Gamma \setminus \Delta) = \ell$, and $C^{[\Delta],m} = C^{[\Delta]} \cap C^{[\ell],m}$.

This series $Z^{\mathrm{Hilb}}_{(C,O)}(a^2, q^2)$, introduced in this paper, naturally generalizes the zeta function studied in \cite{oblomkov2012hilbert}. There, the Euler characteristic specialization (i.e., the point-counting or topological Euler characteristic) of this series was conjecturally related to the HOMFLY polynomial of the associated algebraic knot. Our construction lifts this invariant to the Grothendieck ring of varieties $K_0({\mathrm{Var}}_{{\mathbb{C}}})$, thereby encoding richer geometric information about the stratification of the Hilbert schemes by the number of generators.

\section{Punctual Hilbert schemes}\label{Punctual Hilbert schemes}
In this section, we recall the definition of the punctual Hilbert schemes of a curve singularity and review some of its fundamental properties. Let $(C,O)$ be the germ of a unibranch curve singularity defined over an algebraically closed field $k$ of characteristic zero, and let $A := \mathcal{O}_{C,O}$ denote its local ring at $O$. Then $A$ is a one-dimensional local domain, and its integral closure $\overline{A}$ in its field of fractions is a discrete valuation ring, isomorphic to $k[[t]]$.

Let $v \colon \overline{A} \setminus \{0\} \to \mathbb{N}$ be the associated discrete valuation, extended by $v(0) = \infty$, which maps a nonzero power series in $k[[t]]$ to its order in $t$. The \emph{value semigroup} of $A$ (or of the singularity $(C,O)$) is defined as
\[
\Gamma := v(A \setminus \{0\}) \subseteq \mathbb{N}.
\]
This is a numerical semigroup: a submonoid of $\mathbb{N}$ with finite complement.

For $n \in \mathbb{N}$, define the ideal
\[
\overline{I}(n) := \{ f \in \overline{A} \mid v(f) \geq n \}
\]
in $\overline{A}$, and set $I(n) := \overline{I}(n) \cap A$, which is an ideal in $A$. The \emph{conductor} of $A$ in $\overline{A}$ is the smallest integer $c$ such that $\overline{I}(c) \subset A$, i.e.,
\[
c := \min\left\{ n \in \mathbb{N} \mid \overline{I}(n) \subset A \right\}.
\]
Equivalently, $c$ is the smallest element such that $t^c k[[t]] \subseteq A$ under a fixed isomorphism $\overline{A} \cong k[[t]]$.

The \emph{$\delta$-invariant} of the singularity is defined as
\[
\delta := \dim_k(\overline{A}/A) = \#(\mathbb{N} \setminus \Gamma).
\]
It measures the arithmetic genus defect of the singularity. We have the inequalities
\[
\delta + 1 \leq c \leq 2\delta,
\]
and $c = 2\delta$ if and only if $A$ is Gorenstein~\cite{serre1959groupes}.

For $\ell \in \mathbb{Z}_{>0}$, the \emph{$\ell$-th punctual Hilbert scheme} of $(C,O)$ is the moduli space
\[
C^{[\ell]} := \left\{ I \subset A \mid I \text{ is an ideal with } \dim_k(A/I) = \ell \right\}.
\]
It parametrizes zero-dimensional subschemes of length $\ell$ supported at $O$.\\

Pfister and Steenbrink~\cite{pfister1992reduced} showed that the punctual Hilbert schemes of a unibranch curve singularity can be embedded as closed subvarieties of a Grassmannian. More precisely, let $\mathscr{M}$ denote the reduced subscheme of the Grassmannian $\operatorname{Gr}(\delta, \overline{A}/I(2\delta))$ defined by
\[
\mathscr{M} := \left\{ W \in \operatorname{Gr}(\delta, \overline{A}/I(2\delta)) \mid W \text{ is an } A\text{-submodule of } \overline{A}/I(2\delta) \right\}.
\]
One can verify that $\mathscr{M}$ is a linear subvariety of the Grassmannian—i.e., it is defined by a system of linear equations in the Plücker coordinates.

\begin{proposition}[\cite{pfister1992reduced}, Theorem~3]\label{prop:pfister-steenbrink}
For every $\ell > 0$, there exists a closed embedding
\[
\phi_\ell \colon C^{[\ell]} \longrightarrow \mathscr{M}.
\]
Moreover, $\phi_\ell$ is bijective (as a map of sets) when $\ell \geq c$, the conductor of the singularity.
\end{proposition}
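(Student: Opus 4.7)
The plan is to construct $\phi_\ell$ at the level of $k$-points, upgrade to a scheme morphism via flat base change, verify the closed embedding property for all $\ell$, and separately handle the bijectivity claim for $\ell \geq c$.

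First, for $I \in C^{[\ell]}$ I would associate a canonical fractional ideal $J(I)$ of $A$ with $I(2\delta) \subseteq J(I) \subseteq \overline{A}$ and $\dim_k \overline{A}/J(I) = \delta$, and then set $W_I := J(I)/I(2\delta)$, viewed as a point of $\operatorname{Gr}(\delta, \overline{A}/I(2\delta))$. The inclusion $c \leq 2\delta$ guarantees that $I(2\delta) = t^{2\delta}\overline{A} \subseteq A$, so $W_I$ is automatically an $A$-submodule and lies in $\mathscr{M}$. The numerical balance is forced by $\dim_k \overline{A}/I = \delta + \ell$: the construction of $J(I)$ from $I$ uses a canonical shift in the valuation (or a colon operation of the form $(I : \mathfrak{m}^{?})$, $I \cdot t^{?}$, or similar) chosen so that the extra $\ell$ dimensions relative to $I$ are precisely absorbed, producing a fractional ideal of the required codimension.

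To promote $\phi_\ell$ to a morphism of schemes I would invoke that $C^{[\ell]}$ represents the functor of flat families of colength-$\ell$ quotients of $A$ and that the assignment $I \mapsto J(I)$ is compatible with flat base change, since all the relevant $k$-dimensions are locally constant in flat families. Upgrading to a closed embedding: $C^{[\ell]}$ is projective (it is the fiber of the Hilbert--Chow morphism $\operatorname{Hilb}^\ell(C) \to \operatorname{Sym}^\ell(C)$ over $\ell[O]$), and $\mathscr{M}$ is separated; hence it suffices to check that $\phi_\ell$ is a proper monomorphism, which reduces to (i) injectivity on $k$-points, proved by reconstructing $I$ from $W_I$ via the inverse of the shift/colon operation, and (ii) injectivity on tangent spaces, obtained by identifying $\operatorname{Hom}_A(I, A/I)$, the tangent space of $C^{[\ell]}$ at $[I]$, with its image in the tangent space of $\mathscr{M}$ at $W_I$.

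For the bijectivity statement when $\ell \geq c$, I would show that every $W \in \mathscr{M}$ arises as $W_I$ for a unique $I \in C^{[\ell]}$. Given $W$, one lifts it to its preimage $\widetilde{W} \subseteq \overline{A}$, a fractional ideal of codimension $\delta$ containing $I(2\delta)$, and applies the inverse of the construction of $J(I)$ to produce a candidate ideal $I$. The hypothesis $\ell \geq c$ enters precisely in checking that this $I$ lies in $A$ and has colength exactly $\ell$: below the conductor there exist $W \in \mathscr{M}$ with no preimage of the correct colength, but for $\ell \geq c$ the fractional ideals parameterized by $\mathscr{M}$ are ``stable enough'' to match honest ideals of $A$ bijectively. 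The main obstacle is the first step: producing a construction $I \mapsto W_I$ whose image has constant dimension $\delta$ and is continuous in families. The naive candidates fail --- $(I + I(2\delta))/I(2\delta)$ has dimension depending on $I$, while $t^{-\min v(I)} I \bmod I(2\delta)$ is discontinuous in flat families because $\min v(I)$ jumps --- so a genuinely canonical uniform shift or colon construction is required, and this is the technical core of Pfister--Steenbrink's argument.
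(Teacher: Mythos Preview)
The paper does not prove this proposition; it is stated with a citation to \cite{pfister1992reduced}, Theorem~3, and used thereafter as a black box. There is no proof in the paper for your proposal to be compared against.

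As for your proposal on its own merits: the overall architecture is reasonable---construct the map on closed points, globalize via flat families, and check that the result is a proper monomorphism. But you explicitly identify the one substantive difficulty (producing a functorial assignment $I \mapsto J(I)$ with $\dim_k \overline{A}/J(I) = \delta$ that varies algebraically) and then leave it open, correctly observing that the naive candidates fail and deferring to ``the technical core of Pfister--Steenbrink's argument.'' Everything else in your sketch is either routine or conditional on this missing construction. So what you have written is an accurate outline of what needs to be checked, not a proof: the actual content of the theorem is precisely the step you have not supplied.
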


Since $c \leq 2\delta$, this implies that for $\ell \geq 2\delta$, the structure of $C^{[\ell]}$ stabilizes in the sense that its points are completely determined by the fixed ambient variety $\mathscr{M}$. Consequently, to understand the classes $[C^{[\ell]}]$ in the Grothendieck ring of varieties $K_0(\mathrm{Var}_k)$, it suffices to study the punctual Hilbert schemes for $\ell \leq 2\delta$.

A powerful method to compute these classes is to stratify each $C^{[\ell]}$ into constructible subsets using the valuation $v$ and the value semigroup $\Gamma = v(A \setminus \{0\})$. To this end, recall that a subset $\Delta \subset \Gamma $ is called a \emph{$\Gamma$-subsemimodule} if
\[
\Delta + \Gamma \subseteq \Delta.
\]
Note that such a $\Delta$ need not be contained in $\Gamma$ a priori, but if $\Delta = v(I \setminus \{0\})$ for some nonzero ideal $I \subset A$, then $\Delta \subset \Gamma$. Indeed, for any $f \in I$, $g \in A$, we have $gf \in I$, and $v(gf) = v(g) + v(f)$, so $v(I \setminus \{0\})$ is closed under addition by elements of $\Gamma$, hence is a $\Gamma$-subsemimodule.

For any $\Gamma$-subsemimodule $\Delta$, we define the stratum
\[
C^{[\Delta]} := \left\{ I \in C^{[\ell]} \mid \ell = \#(\Gamma \setminus \Delta),\ v(I \setminus \{0\}) = \Delta \right\}.
\]

Consider the set
\[
\mathscr{D}_\ell := \left\{ \Delta \subset \Gamma \mid \Delta \text{ is a } \Gamma\text{-subsemimodule with } \#(\Gamma \setminus \Delta) = \ell \right\}.
\]

\begin{lemma}\parencite[Lemme 5.1.24]{pol2016singularites} For positive integer $\ell$, 
an ideal $I$ of $A$ belongs to  $C^{[\ell]}$ if and only if $ v(I \setminus \{0\}) \in \mathscr{D}_\ell $. 
\label{ideal codimension}
\end{lemma}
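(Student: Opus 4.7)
The plan is to prove the dimension identity
\[
\dim_k(A/I) \;=\; \#\bigl(\Gamma \setminus v(I\setminus\{0\})\bigr),
\]
with both sides finite or infinite simultaneously; the lemma follows at once, since $I\in C^{[\ell]}$ means $\dim_k(A/I)=\ell$ and $v(I\setminus\{0\})\in\mathscr{D}_\ell$ means $\#(\Gamma\setminus v(I\setminus\{0\}))=\ell$ (that $v(I\setminus\{0\})$ is automatically a $\Gamma$-subsemimodule of $\Gamma$ was already observed in the paragraph preceding the lemma).

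Set $\Delta:=v(I\setminus\{0\})$ and, for each $\gamma\in\Gamma\setminus\Delta$, choose $a_\gamma\in A$ with $v(a_\gamma)=\gamma$. I would show that the classes $\{a_\gamma+I\}_{\gamma\in\Gamma\setminus\Delta}$ form a $k$-basis of $A/I$. For linear independence, any nonzero $k$-combination $\sum\lambda_\gamma a_\gamma$ has valuation equal to the smallest index $\gamma_0$ with $\lambda_{\gamma_0}\neq 0$; if it lay in $I$, then $\gamma_0\in\Delta$, contradicting $\gamma_0\in\Gamma\setminus\Delta$.

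For spanning, given $a\in A$ I apply a successive approximation that strictly increases the valuation at each step: if $v(a)\in\Delta$, subtract a scalar multiple of an element of $I$ with matching valuation chosen so as to cancel the leading $t$-coefficient; if $v(a)\in\Gamma\setminus\Delta$, subtract a scalar multiple of $a_{v(a)}$ chosen similarly. When $\Gamma\setminus\Delta$ is finite, after finitely many steps the valuation exceeds $\max(\Gamma\setminus\Delta)$ and only the first move can occur thereafter; the accumulated $I$-corrections then form a series whose partial sums have valuations tending to $\infty$, and by completeness of $A$ together with closedness of $I$ this series converges to an element of $I$, yielding $a = \sum_\gamma \lambda_\gamma a_\gamma + b$ with $b\in I$. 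When $\Gamma\setminus\Delta$ is infinite, the linearly independent family $\{a_\gamma+I\}$ already witnesses $\dim_k(A/I)=\infty$, so the identity holds trivially as $\infty=\infty$.

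The main obstacle is the convergence step in the finite case: one must confirm that the valuation filtration $A_n:=\{f\in A : v(f)\geq n\}$ and the $\mathfrak{m}$-adic filtration induce equivalent complete topologies on $A$, so that closedness of $I$ in the $\mathfrak{m}$-adic topology (Krull's intersection theorem) legitimately applies to the limit. One inclusion is immediate from $\mathfrak{m}^n\subseteq A_n$; the other rests on the fact that, for $n$ beyond the conductor, $A_n=t^n\overline{A}$ is a finitely generated $A$-module whose generators can be shown to lie in arbitrarily high powers of $\mathfrak{m}$, e.g.\ via Artin--Rees applied to the inclusion $A\subseteq\overline{A}$.
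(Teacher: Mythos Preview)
The paper does not supply its own proof of this lemma; it simply cites \parencite[Lemme~5.1.24]{pol2016singularites}. Your argument is the standard one and is correct.

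That said, your ``main obstacle'' is a phantom: the convergence step can be avoided entirely. Pick any nonzero $f\in I$ and set $d=v(f)$. Since $c$ is the conductor, $I(c)=t^c k[[t]]\subseteq A$, and hence
\[
I(c+d)\;=\;t^{c+d}k[[t]]\;=\;f\cdot t^c k[[t]]\;=\;f\cdot I(c)\;\subseteq\; f\cdot A\;\subseteq\; I.
\]
Thus any element of valuation at least $c+d$ already lies in $I$. Your successive-approximation process therefore terminates in finitely many steps with a remainder in $I$, and no appeal to completeness, closedness of $I$, or comparison of filtrations is needed. (This also shows, incidentally, that for any nonzero ideal $I$ the complement $\Gamma\setminus\Delta$ is automatically finite, so the infinite case in your write-up never arises for the ideals under consideration.)
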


It follows from the  Lemma \ref{ideal codimension} that we have a finite stratification
\[\label{stratification}
C^{[\ell]} = \bigsqcup_{\Delta \in \mathscr{D}_\ell} C^{[\Delta]}.
\]
Each stratum $C^{[\Delta]}$ consists of ideals $I \subset A$ with $v(I \setminus \{0\}) = \Delta$, and the decomposition reflects the combinatorial classification of ideals via their value semimodules.

It is shown in \parencite[Lemma~5]{pfister1992reduced} that each stratum $C^{[\Delta]}$ arises as the intersection of $C^{[\ell]}$ with a Schubert cell in the Grassmannian $\operatorname{Gr}(\delta, \overline{A}/I(2\delta))$. In particular, this implies that $C^{[\Delta]}$ is a open subset of $C^{[\ell]}$, and its geometry is compatible with the cellular decomposition of the ambient Grassmannian.

\section{The tree structure of subsemimodules}\label{tree}

Let $\Gamma$ be the value semigroup associated with the germ of an irreducible curve singularity $(C,O)$. The goal of this section is to endow the set of $\Gamma$-subsemimodules with a tree structure. We will show in the next section that this combinatorial object plays a crucial role in understanding the geometry of the punctual Hilbert schemes of $C$. This construction is inspired by the work of Soma and Watari~\cite{soma2014punctual, soma2015punctual}.\\

For $\Delta \in \mathscr{D}_\ell$, let $\gamma_1, \dots, \gamma_n$ be a minimal system of generators of $\Delta$ as a $\Gamma$-subsemimodule. Up to reordering, we may assume $\gamma_1 < \cdots < \gamma_n$. We use the notation
\[
\Delta = (\gamma_1, \dots, \gamma_n)_{\Gamma} := \bigcup_{i=1}^n (\gamma_i + \Gamma),
\]
where the sum denotes the smallest $\Gamma$-subsemimodule containing all $\gamma_i + \Gamma$. By minimality, we have
\[
\Delta \supsetneq \bigcup_{\substack{j=1 \\ j \neq i}}^n (\gamma_j + \Gamma) \quad \text{for all } i \in \{1, \dots, n\}.
\]

The following result appears in \cite{soma2014punctual}:

\begin{lemma}\label{lem:del}
Let $\Delta = (\gamma_1, \dots, \gamma_n)_{\Gamma}  \in \mathscr{D}_\ell$, where $\{\gamma_1, \dots, \gamma_n\}$ is a minimal system of generators. Then for every $i \in \{1, \dots, n\}$, the set
\[
\Delta \setminus \{\gamma_i\}
\]
belongs to $\mathscr{D}_{\ell+1}$.
\end{lemma}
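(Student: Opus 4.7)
The plan is to verify the three defining requirements for membership in $\mathscr{D}_{\ell+1}$ directly. Set $\Delta' := \Delta \setminus \{\gamma_i\}$. First, I would note that $\Delta' \subset \Delta \subset \Gamma$, so the inclusion condition is automatic. Second, to count the complement, observe that since $\gamma_i \in \Delta$ we have $\gamma_i \notin \Gamma \setminus \Delta$, so
\[
\Gamma \setminus \Delta' = (\Gamma \setminus \Delta) \sqcup \{\gamma_i\},
\]
a disjoint union, giving $\#(\Gamma \setminus \Delta') = \ell + 1$. This is purely bookkeeping.

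The only substantive step is verifying that $\Delta'$ is still closed under the $\Gamma$-action, i.e. that $\delta + \gamma \in \Delta'$ whenever $\delta \in \Delta'$ and $\gamma \in \Gamma$. Since $\Delta$ is a $\Gamma$-subsemimodule we automatically have $\delta + \gamma \in \Delta$, so the issue reduces to showing $\delta + \gamma \neq \gamma_i$. I would argue by contradiction: suppose $\delta + \gamma = \gamma_i$. The case $\gamma = 0$ is immediate, since it forces $\delta = \gamma_i$, contradicting $\delta \in \Delta'$. For $\gamma > 0$, I would use the generator decomposition $\Delta = \bigcup_{k=1}^n (\gamma_k + \Gamma)$ to write $\delta = \gamma_j + \eta$ for some index $j$ and some $\eta \in \Gamma$ (recall $0 \in \Gamma$). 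Then
\[
\gamma_i \;=\; \delta + \gamma \;=\; \gamma_j + (\eta + \gamma) \;\in\; \gamma_j + \Gamma.
\]

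The key observation is that $\gamma > 0$ forces $\delta < \gamma_i$ and hence $\gamma_j \le \delta < \gamma_i$, so $j \neq i$. Consequently $\gamma_i \in \bigcup_{k \neq i}(\gamma_k + \Gamma)$, which contradicts the minimality of the generating system $\{\gamma_1,\dots,\gamma_n\}$. This is the one place where the minimality hypothesis is essential — without it, $\gamma_i$ could be redundantly expressible in terms of the other generators, and removing it from $\Delta$ would not preserve closure. I expect this minimality step to be the only real content of the argument; everything else is essentially a cardinality count.
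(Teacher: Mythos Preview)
Your proof is correct and follows essentially the same approach as the paper's. Both argue closure under the $\Gamma$-action by contradiction, writing the hypothetical relation $\gamma_i = \delta + \gamma$ with $\delta$ expressed via some generator $\gamma_j$, and then invoking minimality once $j \neq i$ is established; your justification for $j \neq i$ (via the inequality $\gamma_j \le \delta < \gamma_i$) is slightly more explicit than the paper's, but the content is the same.
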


\begin{proof}
We must show that $\Delta \setminus \{\gamma_i\}$ is a $\Gamma$-subsemimodule and that $\#(\Gamma \setminus (\Delta \setminus \{\gamma_i\})) = \ell + 1$.

First, we verify that $\Delta \setminus \{\gamma_i\}$ is closed under addition by $\Gamma$. Let $x \in \Gamma$ and $y \in \Delta \setminus \{\gamma_i\}$. Since $\Delta$ is a $\Gamma$-subsemimodule, $x + y \in \Delta$. Suppose, for contradiction, that $x + y = \gamma_i$. Then $x \ne 0$ (since $y \ne \gamma_i$), and thus $\gamma_i = x + y \in \gamma_j + \Gamma$ for some $j \ne i$ (as $y \in (\gamma_j + \Gamma)$ for some $j \ne i$). This implies that $\gamma_i$ is in the $\Gamma$-subsemimodule generated by $\{\gamma_j \mid j \ne i\}$, contradicting the minimality of the generating set. Therefore, $x + y \ne \gamma_i$, so $x + y \in \Delta \setminus \{\gamma_i\}$.

Hence, $\Delta \setminus \{\gamma_i\}$ is a $\Gamma$-subsemimodule. Since $\Gamma \setminus (\Delta \setminus \{\gamma_i\}) = (\Gamma \setminus \Delta) \cup \{\gamma_i\}$ and $\gamma_i \notin \Gamma \setminus \Delta$, we have
\[
\#(\Gamma \setminus (\Delta \setminus \{\gamma_i\})) = \#(\Gamma \setminus \Delta) + 1 = \ell + 1.
\]
Thus, $\Delta \setminus \{\gamma_i\} \in \mathscr{D}_{\ell+1}$.
\end{proof}

For $n \in \mathbb{N}$, define
\[
\mathscr{D}_{\ell,n} := \left\{ \Delta \in \mathscr{D}_\ell \mid \Delta \text{ has exactly } n \text{ minimal generators as a } \Gamma\text{-subsemimodule} \right\}.
\]
Note that we have the disjoint decomposition
\[
\mathscr{D}_\ell = \bigsqcup_{n \geq 1} \mathscr{D}_{\ell,n}.
\]

It follows from Lemma~\ref{lem:del} that for each $i \in \{1, \dots, n\}$, there is a canonical \emph{deletion map}
\[
d_{\ell,i} \colon \mathscr{D}_{\ell,n} \to \mathscr{D}_{\ell+1}, \quad \Delta \mapsto \Delta \setminus \{\gamma_i\},
\]
where $\gamma_i$ is one of the minimal generators of $\Delta$.

A natural and important question is whether every element of $\mathscr{D}_{\ell+1}$ arises in this way—i.e., can all $\Gamma$-subsemimodules of codimension $\ell+1$ be obtained by removing a minimal generator from some $\Gamma$-subsemimodule of codimension $\ell$ ? In other words:
\[
\mathscr{D}_{\ell+1} \overset{?}{=} \bigcup_{n \geq 1} \bigcup_{i=1}^n d_{\ell,i}(\mathscr{D}_{\ell,n}).
\]
The answer is affirmative, as will follow from Proposition~\ref{prop:surjective}, which shows that the deletion maps are jointly surjective. To prepare for this result, we first establish two lemmas.

\begin{definition}
For a  $\Gamma-$subsemimodule $\Delta$. We define the \emph{Frobenius element} of $\Delta$ by
\[
\gamma_{\Delta} := \max(\Gamma \setminus \Delta).
\]

\end{definition}

\begin{lemma}\label{lem:delta_union}
Let $\ell \in \mathbb{N}$ and $\Delta \in \mathscr{D}_{\ell+1}$. Then $\Delta \cup \{\gamma_{\Delta}\} \in \mathscr{D}_\ell$.
\end{lemma}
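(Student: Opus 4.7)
The plan is to verify the two defining conditions for membership in $\mathscr{D}_\ell$: namely, that $\Delta \cup \{\gamma_\Delta\}$ is a $\Gamma$-subsemimodule, and that its complement in $\Gamma$ has exactly $\ell$ elements. Before doing either, I will observe that $\gamma_\Delta = \max(\Gamma\setminus \Delta)$ is well-defined: since $\Delta \in \mathscr{D}_{\ell+1}$, the set $\Gamma \setminus \Delta$ has cardinality $\ell + 1 \geq 1$, and it is finite because $\Gamma \setminus \Delta \subseteq \Gamma$ and $\#(\mathbb{N}\setminus\Gamma) = \delta < \infty$ forces $\Gamma\setminus\Delta$ to be contained in a bounded interval (indeed $\Delta$ contains $\gamma + \Gamma$ for any of its elements, hence all sufficiently large integers in $\Gamma$).

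For the subsemimodule property, I will check that $\Gamma + (\Delta \cup \{\gamma_\Delta\}) \subseteq \Delta \cup \{\gamma_\Delta\}$. Since $\Gamma + \Delta \subseteq \Delta$ holds by hypothesis, it suffices to analyze $x + \gamma_\Delta$ for $x \in \Gamma$. The case $x = 0$ is trivial. For $x > 0$, the sum $x + \gamma_\Delta$ lies in $\Gamma$ (because $\gamma_\Delta \in \Gamma$ and $\Gamma$ is closed under addition) and strictly exceeds $\gamma_\Delta$; by the maximality of $\gamma_\Delta$ in $\Gamma \setminus \Delta$, any element of $\Gamma$ strictly greater than $\gamma_\Delta$ must lie in $\Delta$. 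Hence $x + \gamma_\Delta \in \Delta$, and the closure condition is established.

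For the cardinality count, I will simply note the set-theoretic identity
\[
\Gamma \setminus (\Delta \cup \{\gamma_\Delta\}) = (\Gamma \setminus \Delta) \setminus \{\gamma_\Delta\},
\]
and since $\gamma_\Delta \in \Gamma \setminus \Delta$, the right-hand side has cardinality $\#(\Gamma \setminus \Delta) - 1 = \ell$. Combining the two steps yields $\Delta \cup \{\gamma_\Delta\} \in \mathscr{D}_\ell$.

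There is no real obstacle here; the argument is essentially bookkeeping that exploits the maximality of $\gamma_\Delta$. The only point requiring minor care is ensuring the Frobenius element is well-defined, which follows from the finiteness of $\Gamma \setminus \Delta$ built into the definition of $\mathscr{D}_{\ell+1}$.
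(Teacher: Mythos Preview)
Your proof is correct and follows essentially the same approach as the paper's: both verify the $\Gamma$-subsemimodule property by splitting into the cases $y\in\Delta$ and $y=\gamma_\Delta$ (using maximality of $\gamma_\Delta$ for the latter), and both dispatch the cardinality count by the obvious set-theoretic identity. Your additional remark that $\gamma_\Delta$ is well-defined is a harmless clarification not present in the paper.
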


\begin{proof}
We must show that $\Delta \cup \{\gamma_{\Delta}\}$ is a $\Gamma$-subsemimodule and that its complement in $\Gamma$ has cardinality $\ell$.

First, we verify that $\Delta \cup \{\gamma_{\Delta}\}$ is closed under addition by $\Gamma$. Let $x \in \Gamma$ and $y \in \Delta \cup \{\gamma_{\Delta}\}$. We consider two cases:

\begin{enumerate}
    \item \textbf{Case $y \in \Delta$:}  
    Since $\Delta$ is a $\Gamma$-subsemimodule, $x + y \in \Delta \subset \Delta \cup \{\gamma_{\Delta}\}$.

    \item \textbf{Case $y = \gamma_{\Delta}$:}  
    If $x = 0$, then $x + y = \gamma_{\Delta} \in \Delta \cup \{\gamma_{\Delta}\}$.  
    If $x > 0$, then $x + \gamma_{\Delta} > \gamma_{\Delta} = \max(\Gamma \setminus \Delta)$. Since $x + \gamma_{\Delta} \in \Gamma$, it cannot lie in $\Gamma \setminus \Delta$, so $x + \gamma_{\Delta} \in \Delta \subset \Delta \cup \{\gamma_{\Delta}\}$.
\end{enumerate}

Thus, $\Delta \cup \{\gamma_{\Delta}\}$ is a $\Gamma$-subsemimodule. Moreover, since $\gamma_{\Delta} \notin \Delta$, we have
\[
\#(\Gamma \setminus (\Delta \cup \{\gamma_{\Delta}\})) = \#(\Gamma \setminus \Delta) - 1 = (\ell + 1) - 1 = \ell.
\]
Therefore, $\Delta \cup \{\gamma_{\Delta}\} \in \mathscr{D}_\ell$.
\end{proof}

\begin{lemma}\label{lem:gamma_generator}
Let $\Delta \in \mathscr{D}_{\ell+1}$. Then the Frobenius element $\gamma_\Delta = \max(\Gamma \setminus \Delta)$ is part of every minimal generating set of the $\Gamma$-subsemimodule $\Delta \cup \{\gamma_\Delta\}$.
\end{lemma}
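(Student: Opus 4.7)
The plan is to argue by contradiction, exploiting the maximality of $\gamma_\Delta$ in the gap set $\Gamma \setminus \Delta$. For any $\Gamma$-subsemimodule $M \subseteq \Gamma$, an element $\gamma \in M$ belongs to every minimal generating set if and only if it cannot be written as $\gamma = \gamma' + x$ with $\gamma' \in M$ and $x \in \Gamma$, $x > 0$; this is the standard intrinsic characterization of minimal generators of a $\Gamma$-subsemimodule of $\mathbb{N}$. Consequently, it suffices to show that $\gamma_\Delta$ is intrinsically a minimal generator of $\Delta' := \Delta \cup \{\gamma_\Delta\}$, and the conclusion of the lemma follows automatically.

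Suppose, toward a contradiction, that there exist $\gamma' \in \Delta'$ and $x \in \Gamma$ with $x > 0$ such that $\gamma_\Delta = \gamma' + x$. The condition $x > 0$ forces $\gamma' < \gamma_\Delta$, so $\gamma' \neq \gamma_\Delta$ and hence $\gamma' \in \Delta$. But $\Delta$ is itself a $\Gamma$-subsemimodule, so $\gamma_\Delta = \gamma' + x \in \Delta + \Gamma \subseteq \Delta$, contradicting $\gamma_\Delta \in \Gamma \setminus \Delta$. This contradiction establishes the intrinsic minimality of $\gamma_\Delta$.

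There is essentially no technical obstacle here: the whole argument rests on two elementary facts, namely the defining property $\Delta + \Gamma \subseteq \Delta$ of a $\Gamma$-subsemimodule and the defining property of $\gamma_\Delta$ as the \emph{maximum} of $\Gamma \setminus \Delta$ (rather than merely an element of it). The only point one should flag explicitly when writing up the proof is the intrinsic characterization of minimal generators recalled above, since it is this characterization that lets one translate ``belongs to every minimal generating set'' into the purely combinatorial condition that one then contradicts.
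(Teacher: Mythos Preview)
Your proof is correct and follows essentially the same route as the paper's: both argue by contradiction, writing $\gamma_\Delta = \gamma' + x$ with $\gamma' \in \Delta \cup \{\gamma_\Delta\}$ and $x \in \Gamma_{>0}$, then observing that $\gamma' < \gamma_\Delta$ forces $\gamma' \in \Delta$, whence the subsemimodule property gives $\gamma_\Delta \in \Delta$, a contradiction. Your write-up is in fact slightly cleaner in that it makes explicit the step $\gamma' \in \Delta' \Rightarrow \gamma' \in \Delta$ (via $\gamma' < \gamma_\Delta$), which the paper's version elides.
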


\begin{proof}
Suppose, for contradiction, that $\gamma_\Delta$ is not a minimal generator of $\Delta \cup \{\gamma_\Delta\}$. Then it can be written as
\[
\gamma_\Delta = x + y
\]
for some $x \in \Delta$ and $y \in \Gamma \setminus \{0\}$. Since $y > 0$, we have $x < \gamma_\Delta$. But $x \in \Delta$, so $x \in \Gamma$, and thus $\gamma_\Delta = x + y \in \Gamma$. Moreover, since $\Delta$ is a $\Gamma$-subsemimodule and $x \in \Delta$, $y \in \Gamma$, we have $\gamma_\Delta = x + y \in \Delta$. This contradicts the fact that $\gamma_\Delta \notin \Delta$.

Therefore, $\gamma_\Delta$ cannot be expressed as a sum of an element in $\Delta$ and a nonzero element of $\Gamma$, so it must belong to every minimal generating set of $\Delta \cup \{\gamma_\Delta\}$.
\end{proof}

\begin{proposition}\label{prop:surjective}
The deletion maps cover all of $\mathscr{D}_{\ell+1}$, i.e.,
\[
\mathscr{D}_{\ell+1} = \bigcup_{n \geq 1} \bigcup_{i=1}^n d_{\ell,i}(\mathscr{D}_{\ell,n}).
\]
In other words, every element of $\mathscr{D}_{\ell+1}$ arises as $\Delta' \setminus \{\gamma_i\}$ for some $\Delta' \in \mathscr{D}_\ell$ and some minimal generator $\gamma_i$ of $\Delta'$.
\end{proposition}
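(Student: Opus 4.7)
The plan is to observe that Proposition~\ref{prop:surjective} follows almost immediately by combining Lemmas~\ref{lem:delta_union} and~\ref{lem:gamma_generator}. Given $\Delta \in \mathscr{D}_{\ell+1}$, I would produce an explicit preimage under some deletion map, namely $\Delta' := \Delta \cup \{\gamma_\Delta\}$, where $\gamma_\Delta = \max(\Gamma \setminus \Delta)$. Note that this maximum exists because $\#(\Gamma \setminus \Delta) = \ell + 1 \geq 1$.

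First, I would invoke Lemma~\ref{lem:delta_union} to conclude that $\Delta' \in \mathscr{D}_\ell$. Next, writing $n$ for the number of minimal generators of $\Delta'$ as a $\Gamma$-subsemimodule, so that $\Delta' \in \mathscr{D}_{\ell,n}$, I would apply Lemma~\ref{lem:gamma_generator} to see that $\gamma_\Delta$ must appear in every minimal generating set of $\Delta'$. Thus we may list the minimal generators of $\Delta'$ in increasing order as $\gamma_1 < \cdots < \gamma_n$ and identify $\gamma_\Delta = \gamma_i$ for some index $i \in \{1,\dots,n\}$. By the definition of the deletion map in Lemma~\ref{lem:del},
\[
d_{\ell,i}(\Delta') \;=\; \Delta' \setminus \{\gamma_i\} \;=\; \bigl(\Delta \cup \{\gamma_\Delta\}\bigr) \setminus \{\gamma_\Delta\} \;=\; \Delta,
\]
since $\gamma_\Delta \notin \Delta$. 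This exhibits $\Delta$ as an element of $d_{\ell,i}(\mathscr{D}_{\ell,n})$, which proves the claimed surjectivity.

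There is essentially no obstacle here: the hard combinatorial content has already been packaged into the two preceding lemmas. The only point requiring a moment of care is the well-definedness of $\gamma_\Delta$, which is guaranteed by $\ell + 1 \geq 1$, and the boundary case $\ell = 0$, $\Delta = \Gamma \setminus \{0\}$, where $\gamma_\Delta = 0$ and $\Delta' = \Gamma \in \mathscr{D}_0$ has $0$ as its unique minimal generator. Once this short argument is written out, the reverse inclusion $\bigcup_{n,i} d_{\ell,i}(\mathscr{D}_{\ell,n}) \subseteq \mathscr{D}_{\ell+1}$ is exactly the content of Lemma~\ref{lem:del}, so the two inclusions together yield the equality asserted in the proposition.
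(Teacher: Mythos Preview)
Your proof is correct and follows essentially the same approach as the paper: construct $\Delta' = \Delta \cup \{\gamma_\Delta\}$, invoke Lemma~\ref{lem:delta_union} to place it in $\mathscr{D}_\ell$, invoke Lemma~\ref{lem:gamma_generator} to see that $\gamma_\Delta$ is a minimal generator, and then apply the appropriate deletion map. Your additional remarks on the reverse inclusion and the boundary case $\ell = 0$ are fine but not strictly needed for the statement as formulated.
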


\begin{proof}
Let $\Delta \in \mathscr{D}_{\ell+1}$. By Lemma~\ref{lem:delta_union}, the set $\Delta \cup \{\gamma_\Delta\}$ belongs to $\mathscr{D}_\ell$. By Lemma~\ref{lem:gamma_generator}, $\gamma_\Delta$ is a minimal generator of $\Delta \cup \{\gamma_\Delta\}$.

Let $\{\gamma_1, \dots, \gamma_n\}$ be a minimal generating set of $\Delta \cup \{\gamma_\Delta\}$, with $\gamma_i = \gamma_\Delta$ for some $i$. Then, by definition of the deletion map $d_{\ell,i}$, we have
\[
d_{\ell,i}(\Delta \cup \{\gamma_\Delta\}) = (\Delta \cup \{\gamma_\Delta\}) \setminus \{\gamma_i\} = (\Delta \cup \{\gamma_\Delta\}) \setminus \{\gamma_\Delta\} = \Delta.
\]
Hence, $\Delta$ lies in the image of $d_{\ell,i}$, and the union of the deletion maps is surjective.
\end{proof}

It follows from Lemma~\ref{lem:delta_union} that we can define a key map for the sequel: for $\ell \in \mathbb{N}$ with $1 \leq \ell < c$, set
\[
m_{\ell+1} \colon \mathscr{D}_{\ell+1} \to \mathscr{D}_{\ell}, \quad \Delta \mapsto \Delta \cup \{\gamma_{\Delta}\},
\]
where $\gamma_{\Delta} = \max(\Gamma \setminus \Delta)$ is the Frobenius element of $\Delta$. This map, which "adds back" the largest gap in $\Gamma \setminus \Delta$, will play a central role in the recursive study of strata in the punctual Hilbert schemes.

The following definition introduces one of the main objects of this article. Recall that $c$ denotes the conductor of the curve singularity $(C,O)$.
\begin{definition}\label{def:G_Gamma}
The \emph{$\Gamma$-subsemimodule graph}, denoted $G_\Gamma = (V, E)$, is the directed, levelled graph defined as follows:
\begin{itemize}
    \item \textbf{Vertices:} For each $\ell$ with $1 \leq \ell \leq c$, we denote the set of the vertices at level  $V_\ell:=\mathscr{D}_\ell$, so that
    \[
    V = \bigsqcup_{\ell=1}^{c} V_\ell.
    \]

    \item \textbf{Edges:} For $2 \leq \ell \leq c$, there is a directed edge from a vertex corresponding to $\Delta \in \mathscr{D}_\ell$ to a vertex corresponding to $\Delta' \in \mathscr{D}_{\ell-1}$ if and only if
    \[
    m_\ell(\Delta) = \Delta'.
    \]
    We denote by $E_\ell$ the set of such edges from level $\ell$ to level $\ell-1$, so that
    \[
    E = \bigsqcup_{\ell=2}^{c} E_\ell.
    \]
\end{itemize}
We refer to $G_\Gamma$ as the \emph{value tree} or \emph{semimodule tree} associated with the singularity $(C,O)$.
\end{definition}

\begin{theorem}\label{Tree}
The $\Gamma$-subsemimodule graph $G_{\Gamma}$ admits a canonical tree structure. The set $\mathscr{D}_1$ consists of a single element, $\Gamma \setminus \{0\}$, which we designate as the root of the graph $G_{\Gamma}$.
\end{theorem}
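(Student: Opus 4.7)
The plan is to establish the tree structure by verifying the three defining properties: (i) the root set $\mathscr{D}_1$ is a singleton, (ii) every non-root vertex has a unique outgoing edge, i.e.\ a unique parent, and (iii) repeated application of the parent map $m_\ell$ terminates at the root, which yields connectivity.

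First I would identify the root. Suppose $\Delta \in \mathscr{D}_1$, so $\Delta \subseteq \Gamma$ and $\Gamma \setminus \Delta = \{a\}$ for a unique element $a$. If $0 \in \Delta$, then the closure condition $\Delta + \Gamma \subseteq \Delta$ forces $\Gamma = 0 + \Gamma \subseteq \Delta$, contradicting $\Delta \subsetneq \Gamma$. Hence $0 \notin \Delta$, which, combined with $0 \in \Gamma$, gives $a = 0$ and $\Delta = \Gamma \setminus \{0\}$. Conversely, $\Gamma \setminus \{0\}$ is easily checked to be a $\Gamma$-subsemimodule of complement of size $1$: any sum $x + y$ with $x \in \Gamma \setminus \{0\}$ and $y \in \Gamma$ satisfies $x + y \geq x > 0$, so lies in $\Gamma \setminus \{0\}$. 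Therefore $\mathscr{D}_1 = \{\Gamma \setminus \{0\}\}$.

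Next I would verify the unique-parent property. Fix $\ell$ with $2 \leq \ell \leq c$ and $\Delta \in \mathscr{D}_\ell$. Since $\Gamma \setminus \Delta$ is a nonempty finite set of nonnegative integers, the Frobenius element $\gamma_\Delta = \max(\Gamma \setminus \Delta)$ is well defined and unique. By Lemma~\ref{lem:delta_union} the set $m_\ell(\Delta) = \Delta \cup \{\gamma_\Delta\}$ belongs to $\mathscr{D}_{\ell-1}$, so there is at least one outgoing edge from $\Delta$. Uniqueness is immediate from the definition of the edge set $E_\ell$: by Definition~\ref{def:G_Gamma} an edge from $\Delta$ goes only to $m_\ell(\Delta)$, and $m_\ell(\Delta)$ is a well defined single element. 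Thus every vertex at level $\ell \geq 2$ has exactly one outgoing edge.

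Finally I would prove connectivity to the root by iterating $m$. Starting from any $\Delta \in \mathscr{D}_\ell$, successive applications
\[
\Delta \;\longmapsto\; m_\ell(\Delta) \;\longmapsto\; m_{\ell-1}(m_\ell(\Delta)) \;\longmapsto\; \cdots
\]
yield a unique directed path descending one level at each step. After $\ell - 1$ steps the path reaches a vertex in $\mathscr{D}_1$, which by step one is the unique element $\Gamma \setminus \{0\}$. Combining the three points: the underlying undirected graph is connected (every vertex admits a path to $\Gamma \setminus \{0\}$) and acyclic (each non-root vertex has a unique parent and levels strictly decrease along edges), hence it is a tree rooted at $\Gamma \setminus \{0\}$, as claimed. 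There is no genuine obstacle here beyond the singleton identification at level $1$; once that is established, the tree structure follows mechanically from the fact that $\gamma_\Delta$ is uniquely defined and Lemma~\ref{lem:delta_union} guarantees that $m_\ell$ sends each $\mathscr{D}_\ell$ into $\mathscr{D}_{\ell-1}$.
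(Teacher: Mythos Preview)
Your proof is correct and follows essentially the same approach as the paper: connectivity via iterating $m_\ell$ down to the root, and acyclicity from the uniqueness of the Frobenius element (equivalently, the unique-parent property together with the level grading). Your argument is in fact slightly more complete than the paper's, since you explicitly verify that $\mathscr{D}_1 = \{\Gamma\setminus\{0\}\}$, a point the paper asserts in the theorem statement but does not spell out in the proof.
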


\begin{proof}
We establish that $G_{\Gamma}$ is a tree by verifying the two defining properties of a tree in graph theory:

\begin{enumerate}
    \item \textbf{Connectedness:} Let $\Delta_1 \in \mathscr{D}_{\ell_1}$ and $\Delta_2 \in \mathscr{D}_{\ell_2}$ be arbitrary vertices in $G_{\Gamma}$. Then there exists a finite sequence of maps
    \[
    \Delta_1 \to m_{\ell_1}(\Delta_1) \to \cdots \to \Gamma \setminus \{0\} \leftarrow \cdots \leftarrow m_{\ell_2}(\Delta_2) \leftarrow \Delta_2,
    \]
    where each arrow corresponds to an edge in $G_{\Gamma}$, and $\Gamma \setminus \{0\} \in \mathscr{D}_1$ is the root. This shows that any two vertices are connected by a path, so $G_{\Gamma}$ is connected.

    \item \textbf{Acyclicity:} it follows from the uniqueness of Frobenius element of a $\Gamma$-subsemimodule.

\end{enumerate}

Thus, $G_{\Gamma}$ is connected and acyclic, and hence has the structure of a tree, with root $\Gamma \setminus \{0\}$.
\end{proof}

We can identify the edges $E_\ell = \{m_{\ell|_{\Delta}} : \Delta \to m_\ell(\Delta) \mid \Delta \in \mathscr{D}_\ell\}_{2\leq \ell \leq c}$ to $\{d_{\ell-1,i \mid m_\ell(\Delta)}:    m_\ell(\Delta) \rightarrow \Delta \mid \Delta \in \mathscr{D}_\ell\}_{2\leq \ell \leq c}$ or to $\{d_{\ell,i\mid \Delta}:\Delta\rightarrow d_{\ell,i}(\Delta)\mid m_{\ell+1}d_{\ell,i}(\Delta)=\Delta\}_{1\leq \ell< c}$. \\

In the sequel, for a fixed parameter $\ell$, we adopt the following simplified notations when there is no ambiguity: set $d_i := d_{\ell,i}$ and $m := m_\ell$.

For a $\Gamma$-subsemimodule $\Delta$, we define its \emph{conductor} as
\[
c(\Delta) := \min\left\{ i \in \Delta \mid i + \mathbb{N} \subseteq  \Delta \right\}.
\]

The tree $G_\Gamma$ has the following properties:

\begin{proposition}\label{property_of_tree}
Let $\Gamma$ be a numerical semigroup. We consider the following construction:

\begin{enumerate}
    \item[(i)] Define a sequence of $\Gamma$-subsemimodules $\{\Delta^{(\ell)}\}_{1 \leq \ell \leq c}$ recursively by:
    \begin{itemize}
        \item $\Delta^{(1)} \coloneqq \Gamma \setminus \{0\}$,
        \item $\Delta^{(\ell)} \coloneqq d_1(\Delta^{(\ell-1)}) \in \mathscr{D}_\ell$ for $1 < \ell \leq c$.
    \end{itemize}
    For each $1 < \ell \leq c$, we have $m \circ d_1(\Delta^{(\ell-1)}) = \Delta^{(\ell-1)}$, so the map 
     $d_1$  induces an edge 
    \[
    \Delta^{(\ell-1)} \to \Delta^{(\ell)}
    \]
    in the tree $G_\Gamma$. We refer to $\Delta^{(\ell)}$ as the \emph{level-$\ell$ root vertex} of $G_\Gamma$.

    \item[(ii)] For any $\Gamma$-subsemimodule $\Delta \in \mathscr{D}_\ell$, the following dichotomy holds:
    \begin{itemize}
        \item If $\gamma_\Delta < \min(\Delta)$, then $\Delta = \Delta^{(\ell)}$ for some $\ell \geq 1$. In this case, $\Delta$ lies on the root path of the tree.
        \item If $\gamma_\Delta > \min(\Delta)$, then $m \circ d_1(\Delta) \neq \Delta$, and hence the deletion $d_1(\Delta)$ does not correspond to an edge pointing back to $\Delta$ in $G_\Gamma$. Such $\Delta$ corresponds to a non-root vertex off the main branch.
    \end{itemize}
\end{enumerate}
\end{proposition}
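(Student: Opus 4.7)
The plan begins with a key observation that drives both parts: for any $\Delta \in \mathscr{D}_\ell$, the smallest element $\min(\Delta)$ is always a minimal generator of $\Delta$ as a $\Gamma$-subsemimodule. Indeed, if we had $\min(\Delta) = a + b$ with $a \in \Delta$ and $b \in \Gamma \setminus \{0\}$, then $a \geq \min(\Delta)$ and $b \geq 1$ would force $a + b > \min(\Delta)$, a contradiction. Hence in the ordered minimal system $\gamma_1 < \cdots < \gamma_n$, we always have $\gamma_1 = \min(\Delta)$, so the map $d_1$ acts explicitly as $d_1(\Delta) = \Delta \setminus \{\min(\Delta)\}$.

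For part (i), I would prove by induction on $\ell$ the stronger assertion that
\[
\Delta^{(\ell)} = \{\gamma \in \Gamma \mid \gamma > \gamma_{\Delta^{(\ell)}}\} \quad \text{and} \quad \gamma_{\Delta^{(\ell)}} = \min(\Delta^{(\ell-1)}).
\]
The base case $\ell=1$ is immediate with $\gamma_{\Delta^{(1)}}=0$. For the inductive step, the observation above yields $\Delta^{(\ell)} = \Delta^{(\ell-1)} \setminus \{\min(\Delta^{(\ell-1)})\}$, and consequently $\Gamma \setminus \Delta^{(\ell)} = (\Gamma \setminus \Delta^{(\ell-1)}) \cup \{\min(\Delta^{(\ell-1)})\}$. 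Since $\min(\Delta^{(\ell-1)}) > \gamma_{\Delta^{(\ell-1)}}$ by the induction hypothesis, the new maximum is $\min(\Delta^{(\ell-1)})$, giving $\gamma_{\Delta^{(\ell)}} = \min(\Delta^{(\ell-1)})$, and the explicit form of $\Delta^{(\ell)}$ follows. The relation $m \circ d_1(\Delta^{(\ell-1)}) = m(\Delta^{(\ell)}) = \Delta^{(\ell)} \cup \{\gamma_{\Delta^{(\ell)}}\} = \Delta^{(\ell-1)}$ is then automatic, so $d_1$ induces the desired edge $\Delta^{(\ell-1)} \to \Delta^{(\ell)}$ in $G_\Gamma$.

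For part (ii), suppose first that $\gamma_\Delta < \min(\Delta)$. Then every gap of $\Delta$ in $\Gamma$ lies in $[0,\gamma_\Delta]$, while every element of $\Gamma$ above $\gamma_\Delta$ must lie in $\Delta$ (by the definition of $\gamma_\Delta$), so $\Delta = \{\gamma \in \Gamma \mid \gamma > \gamma_\Delta\}$. The inductive description from part (i) shows that $\{\gamma_{\Delta^{(\ell')}}\}_{\ell' \geq 1}$ enumerates the elements of $\Gamma$ in increasing order starting from $0$, so exactly one $\Delta^{(\ell')}$ satisfies $\gamma_{\Delta^{(\ell')}} = \gamma_\Delta$; the equality $\#(\Gamma \setminus \Delta) = \ell$ forces $\ell' = \ell$, hence $\Delta = \Delta^{(\ell)}$. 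In the opposite case $\gamma_\Delta > \min(\Delta)$, applying $d_1$ yields $d_1(\Delta) = \Delta \setminus \{\min(\Delta)\}$, and the new Frobenius element is
\[
\gamma_{d_1(\Delta)} = \max\bigl((\Gamma \setminus \Delta) \cup \{\min(\Delta)\}\bigr) = \gamma_\Delta,
\]
since $\gamma_\Delta > \min(\Delta)$ by hypothesis. Therefore $m \circ d_1(\Delta) = (\Delta \setminus \{\min(\Delta)\}) \cup \{\gamma_\Delta\}$, which contains $\gamma_\Delta \notin \Delta$ and excludes $\min(\Delta) \in \Delta$, and so differs from $\Delta$.

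No step is genuinely deep; the only conceptual point requiring care is the identification $\gamma_1 = \min(\Delta)$, which is what makes the map $d_1$ amenable to the direct gap-set bookkeeping underlying both parts. Once this is in hand, everything reduces to tracking how $\Gamma \setminus \Delta$ and its maximum $\gamma_\Delta$ evolve under $d_1$ and $m$.
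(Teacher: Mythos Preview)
Your proof is correct and, for part~(i), essentially identical to the paper's induction; you make explicit the observation $\gamma_1=\min(\Delta)$ and the description $\Delta^{(\ell)}=\{\gamma\in\Gamma:\gamma>\gamma_{\Delta^{(\ell)}}\}$, which the paper uses implicitly.

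For part~(ii) the two arguments diverge slightly. In the first case ($\gamma_\Delta<\min(\Delta)$) you use the explicit form of the $\Delta^{(\ell)}$ to identify $\Delta$ directly as one of them, whereas the paper argues indirectly by showing $d_1\circ m(\Delta)=\Delta$ and then iterating $m$ with a conductor-decrease argument until reaching the root. Your route is shorter and avoids the somewhat vague termination step in the paper. In the second case ($\gamma_\Delta>\min(\Delta)$) you prove exactly the claim as stated, namely $m\circ d_1(\Delta)\neq\Delta$, by computing $\gamma_{d_1(\Delta)}=\gamma_\Delta$; the paper instead verifies the dual relation $d_1\circ m(\Delta)\neq\Delta$. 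Both conclusions support the intended interpretation that $\Delta$ is off the main $d_1$-branch, but your computation matches the statement more literally.
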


\begin{proof}
We prove each part in turn.

\medskip

{(i)} We proceed by induction to show that $ m_\ell(\Delta^{(\ell)}) = \Delta^{(\ell-1)} $ for all $ 2 \leq \ell \leq c $.

For the base case $ \ell = 2 $, recall that $ \Delta^{(1)} = \Gamma \setminus \{0\} $, so $ \min(\Delta^{(1)}) $ is the smallest positive element of $ \Gamma $. By construction, $ \Delta^{(2)} = d_1(\Delta^{(1)}) = \Delta^{(1)} \setminus \{\min(\Delta^{(1)})\} $. Thus,
\[
\Gamma \setminus \Delta^{(2)} = \{0\} \cup (\Gamma \setminus \Delta^{(1)}) = \{0, \min(\Delta^{(1)})\},
\]
and since $ \min(\Delta^{(1)}) > 0 $, we have
\[
\gamma_{\Delta^{(2)}} = \max(\Gamma \setminus \Delta^{(2)}) = \min(\Delta^{(1)}).
\]
Therefore,
\[
m_2(\Delta^{(2)}) = \Delta^{(2)} \cup \{\gamma_{\Delta^{(2)}}\} = \Delta^{(2)} \cup \{\min(\Delta^{(1)})\} = \Delta^{(1)},
\]
which establishes the base case.

Now suppose the claim holds for some $ \ell - 1 $, i.e., $ m_{\ell-1}(\Delta^{(\ell-1)}) = \Delta^{(\ell-2)} $. By definition, $ \Delta^{(\ell)} = d_1(\Delta^{(\ell-1)}) = \Delta^{(\ell-1)} \setminus \{\min(\Delta^{(\ell-1)})\} $. Then
\[
\Gamma \setminus \Delta^{(\ell)} = (\Gamma \setminus \Delta^{(\ell-1)}) \cup \{\min(\Delta^{(\ell-1)})\},
\]
and since $ \min(\Delta^{(\ell-1)}) > \max(\Gamma \setminus \Delta^{(\ell-1)}) $ (by the inductive structure), we have
\[
\gamma_{\Delta^{(\ell)}} = \max(\Gamma \setminus \Delta^{(\ell)}) = \min(\Delta^{(\ell-1)}).
\]
Thus,
\[
m_\ell(\Delta^{(\ell)}) = \Delta^{(\ell)} \cup \{\gamma_{\Delta^{(\ell)}}\} = \Delta^{(\ell)} \cup \{\min(\Delta^{(\ell-1)})\} = \Delta^{(\ell-1)},
\]
completing the induction.

\medskip

{(ii)} Suppose $ \gamma_\Delta < \min(\Delta) $. Then $ \min(m(\Delta)) = \min(\Delta \cup \{\gamma_\Delta\}) = \gamma_\Delta $, since $ \gamma_\Delta \notin \Delta $ and $ \gamma_\Delta < \min(\Delta) $. In particular, $ \gamma_\Delta $ is strictly smaller than all elements of $ \Delta $, so it becomes the minimal element of $ m(\Delta) $. Therefore,
\[
d_1(m(\Delta)) = m(\Delta) \setminus \{\min(m(\Delta))\} = (\Delta \cup \{\gamma_\Delta\}) \setminus \{\gamma_\Delta\} = \Delta,
\]
which shows that $ \Delta $ lies on the image of the $ d_1 $-path starting from $ \Delta^{(1)} $.

Moreover, since $ c(m(\Delta)) \leq \gamma_\Delta < c(\Delta) $, the conductor strictly decreases along this process. Hence, repeated application of $ m $ eventually yields a subsemimodule $ m^k(\Delta) $ such that $ \gamma_{m^k(\Delta)} < \min(m^k(\Delta)) $ and $ c(m^k(\Delta)) $ is minimal. This process terminates at $ \Delta^{(1)} = \Gamma \setminus \{0\} $, so $ \Delta = \Delta^{(\ell)} $ for some $ \ell $.

Conversely, if $ \gamma_\Delta > \min(\Delta) $, then $ \min(m(\Delta)) = \min(\Delta) $, and $ \gamma_\Delta $ is not the smallest element of $ m(\Delta) $. Therefore, $ d_1(m(\Delta)) $ removes $ \min(\Delta) $, not $ \gamma_\Delta $, so
\[
d_1(m(\Delta)) \neq \Delta,
\]
and thus the edge $ \Delta \to m(\Delta) $ is not reversible via $ d_1 $. This means $ \Delta $ does not lie on the main $ d_1 $-branch of the tree.
\end{proof}

\begin{example}\label{ex:34}
Let $A = k[[t^3, t^4]]$, the local ring of the plane curve singularity defined by $y^3 - x^4 = 0$. The associated value semigroup is
\[
\Gamma = \langle 3,4 \rangle = \{0, 3, 4, 6, 7, 8, 9, \dots\},
\]
which contains all integers $n \geq 6$ except 5.

Consider the subsemimodule $\Delta = (3,4) = \Gamma \setminus \{0\} \in \mathscr{D}_1$, which is the root of the tree $G_\Gamma$. Applying the deletion maps $d_{1,i}$ corresponding to each minimal generator, we obtain:
\begin{itemize}
    \item $d_{1,1}((3,4)_{\Gamma} ) = (3,4)_{\Gamma}  \setminus \{3\} = \{4,6,7,8,9,\dots\} = (4,6)_{\Gamma}  \in \mathscr{D}_2$,
    \item $d_{1,2}((3,4)_{\Gamma} ) = (3,4)_{\Gamma}  \setminus \{4\} = \{3,6,7,8,9,\dots\} = (3,8)_{\Gamma}  \in \mathscr{D}_2$.
    \item $d_{2,1}((4,6)_{\Gamma} ) = (4,6)_{\Gamma}  \setminus \{4\} = \{6,7,8,9,\dots\} = (6,7,8)_{\Gamma}  \in \mathscr{D}_3. $
\end{itemize}
Note that both $(4,6)$ and $(3,8)$ are distinct elements in $\mathscr{D}_2$, corresponding to two different branches from the root.
\begin{figure}[h]
\centering
\begin{tikzpicture}[level distance=0.1cm,
   level 1/.style={sibling distance=5cm, level distance=1.5cm},
   level 2/.style={sibling distance=6cm, level distance=2cm},
    level 3/.style={sibling distance=3cm, level distance=2cm},
    level 4/.style={sibling distance=2.3cm, level distance=2cm},
    level 5/.style={sibling distance=2cm, level distance=2cm},
   grow'=up]
 
\node {$(3,4)_{\Gamma} $} 
   child {node {$(4,6)_{\Gamma} $} 
    child {node {$(6,7,8)_{\Gamma} $}  
        child {node {$(7,8,9)_{\Gamma} $}
            child {node {$(8,9,10)_{\Gamma} $}
                child{node {$(9,10,11)_{\Gamma} $}edge from parent node[left]{$d_{1}$}}
                    child {node {$(8,10)_{\Gamma} $}edge from parent node[right]{$d_{2}$}}
                        child {node {$(8,9)_{\Gamma} $}edge from parent node[right]{$d_{3}$}}
                        edge from parent node[left]{$d_{1}$}}
                            child {node {$(7,9)_{\Gamma} $}
                            child [missing]{}
                            child [missing]{}
                                child {node {$(7,12)_{\Gamma} $}edge from parent node[right]{$d_{2}$}}
                                edge from parent node[right]{$d_{2}$}}
                                    child {node {$(7,8)_{\Gamma} $}edge from parent node[right]{$d_{3}$}}
                                    edge from parent node[left]{$d_{1}$}}  
                                        child {node {$(6,8)_{\Gamma} $}
                                            child[missing]{}
                                                child[missing]{}
                                                    child {node {$(6,11)_{\Gamma} $}
                                                    child {node {$(6)_{\Gamma} $}edge from parent node[right]{$d_{2}$}}edge from parent node[right]{$d_{2}$}}
                                                    edge from parent node[right]{$d_{2}$}}   
                                                        child {node {$(6,7)_{\Gamma} $}edge from parent node[right]{$d_{3}$}}edge from parent node[left]{$d_{1}$}}
                                                            child {node {$(4,9)_{\Gamma} $ }
                                                                child {node {$(4)_{\Gamma} $}edge from parent node[right]{$d_{2}$}}edge from parent node[right]{$d_{2}$}}
                                                                    edge from parent node[left]{$d_{1}$}}
                                                                        child {node {$(3,8)_{\Gamma} $} 
                                                                            child {node {$(3)_{\Gamma} $}
                                                                            edge from parent node[right]{$d_{2}$}}edge from parent node[right]{$d_{2}$}
};

\end{tikzpicture}
    \caption{Tree for the case of  $E_{6}$ type singularity}
    \label{fig:enter-label}
\end{figure}\\
\end{example}

\section{Piecewise fibrations induced by the edges of the \texorpdfstring{$\Gamma$}{Gamma}-subsemimodules tree}\label{fibrations}

In this section, we focus on the geometry of punctual Hilbert schemes of irreducible plane curve singularities defined by equations of the form $y^p - x^q = 0$ with $\gcd(p, q) = 1$ (the so-called $(p,q)$ case), as well as singularities whose value semigroups are monomial, in a sense to be specified in  Section \ref{Appendix}.

Our main result Theorem~\ref{piecewise fibration} shows that an edge in the $\Gamma$-subsemimodule tree $G_\Gamma$, joining a semimodule $\Delta$ to $m(\Delta)$, induces a piecewise trivial fibration between the corresponding strata $C^{[\Delta]}$ and $C^{[m(\Delta)]}$ in the punctual Hilbert schemes. This fibration structure reveals a recursive geometric organization of the Hilbert scheme.

Corollary~\ref{dim of Hilbert scheme} provides a new perspective on the geometry of $C^{[\Delta]}$ in the case $C = \{y^p = x^q\}$, recovering a result from \parencite[Theorem~13]{oblomkov2018hilbert} via our combinatorial and valuative approach. Our method relies on the explicit defining equations of the strata $C^{[\Delta]}$, introduced in \parencite[Proposition~12]{oblomkov2018hilbert}.

We begin by recalling some key constructions from \cite{oblomkov2018hilbert}. Let $(C,O)$ be the germ of a unibranch curve singularity—either of type $(p,q)$ or with a monomial semigroup—and let $A := \mathcal{O}_{C,O} \subset {\mathbb{C}}[[t]]$ denote its complete local ring. We fix a basis of $A$ that is compatible with the monomial basis of the semigroup algebra ${\mathbb{C}}[\Gamma]$. Specifically, for each $i \in \Gamma$, we consider elements of the form
\[
\phi_i = t^i + \sum_{j > i} a_{i,j} t^j,
\]
where $a_{i,j} \in {\mathbb{C}}$. In the $(p,q)$ case and for monomial semigroups, we may choose $\phi_i = t^i$, i.e., the basis is purely monomial.

Let $J \subset A$ be an ideal, and let $\Delta = v(J \setminus \{0\})$ be its value semimodule. Fix a minimal system of generators $\gamma_1, \dots, \gamma_n$ of $\Delta$ as a $\Gamma$-subsemimodule. For each $\gamma_j$, we choose a generator $f_{\gamma_j} \in J$ with $v(f_{\gamma_j}) = \gamma_j$, of the form
\[
f_{\gamma_j} = \phi_{\gamma_j} + \sum_{k \in \Gamma_{>\gamma_j} \setminus \Delta} \lambda_j^{k - \gamma_j} \phi_k,
\]
where $\Gamma_{>\gamma_j} = \{k \in \Gamma \mid k > \gamma_j\}$, and the coefficients $\lambda_j^{k - \gamma_j} \in {\mathbb{C}}$ are uniquely determined modulo higher-order terms. These representatives are central to the parametrization of the stratum $C^{[\Delta]}$.

The condition $k \in \Gamma_{>\gamma_j} \setminus \Delta$ can be achieved through a normalization or elimination process. For example, consider the ideal
\[
\langle t^4, t^6 + t^7 \rangle \subset {\mathbb{C}}[[t^2, t^3]].
\]
This ideal can also be generated by $t^4$ and $t^6$, since
\[
t^6 = (t^6 + t^7) - t^3 \cdot t^4,
\]
and $t^3 \in {\mathbb{C}}[[t^2, t^3]]$. This kind of reduction allows us to eliminate terms whose valuations lie in $\Delta$, ensuring that each generator is expressed using only basis elements $\phi_k$ with $k \in \Gamma \setminus \Delta$ and $k > \gamma_j$.

This normalization guarantees that the ideal $J$ with $v(J\setminus \{0\}) = \Delta$ is uniquely determined by the coefficients $\lambda_j^{k - \gamma_j}$. In other words, the choice of generators in the form
\[
f_{\gamma_j} = \phi_{\gamma_j} + \sum_{k \in \Gamma_{>\gamma_j} \setminus \Delta} \lambda_j^{k - \gamma_j} \phi_k \in \mathcal{O}_{C}
\]
provides a global chart for the stratum of ideals with fixed value semimodule $\Delta$.

Thus, we can identify the space of such ideals with an affine space of parameters. Define
\[
\operatorname{Gen}_{\Delta} := \operatorname{Spec} {\mathbb{C}}\left[ \lambda_j^{k - \gamma_j} \mid 1 \leq j \leq n,\ k \in \Gamma_{>\gamma_j} \setminus \Delta \right].
\]
Then $\operatorname{Gen}_{\Delta}$ is an affine space of dimension
\[
N = \sum_{j=1}^{n} \#(\Gamma_{>\gamma_j} \setminus \Delta),
\]
and each point in $\operatorname{Gen}_{\Delta}$ corresponds to a tuple of generators $(f_{\gamma_1}, \dots, f_{\gamma_n})$.

We define the deformed generators as functions on $\operatorname{Gen}_{\Delta}$:
\[
\tau_{\gamma_j}(\lambda^{\bullet}_{\bullet}) = \phi_{\gamma_j} + \sum_{k \in \Gamma_{>\gamma_j} \setminus \Delta} \lambda_j^{k - \gamma_j} \phi_k, \quad \lambda_j^{k - \gamma_j} \in {\mathbb{C}}[[\lambda^{\bullet}_{\bullet}, t]].
\]

This data defines an \emph{exponential map}
\[
\operatorname{Exp}_{\Delta} \colon \operatorname{Gen}_{\Delta} \longrightarrow \bigsqcup_{\ell \geq 1} C^{[\ell]}, \quad (\lambda^{\bullet}_{\bullet}) \mapsto \left\langle \tau_{\gamma_1}(\lambda^{\bullet}_{\bullet}), \dots, \tau_{\gamma_n}(\lambda^{\bullet}_{\bullet}) \right\rangle ,
\]
which sends a tuple of coefficients to the ideal generated by the corresponding deformed generators.

By the uniqueness of the coefficients $\lambda_j^{k - \gamma_j}$ for a given ideal $J$ with $v(J\setminus \{0\}) = \Delta$, established via the elimination process, the exponential map restricts to a bijective morphism
\[
\operatorname{Exp}_{\Delta} \colon \operatorname{Exp}_{\Delta}^{-1}(C^{[\Delta]}) \longrightarrow C^{[\Delta]}.
\]
This bijectivity follows from \parencite[Theorem~27]{oblomkov2012hilbert}, which ensures that every ideal in $C^{[\Delta]}$ admits a unique normalized generating set of the prescribed form. Consequently, the stratum $C^{[\Delta]}$ embeds into the parameter space $\operatorname{Gen}_{\Delta}$, yielding a locally closed embedding
\[
C^{[\Delta]} \hookrightarrow \operatorname{Gen}_{\Delta} \cong \mathbb{A}^N,
\]
where $N = \sum_{j=1}^n \#(\Gamma_{>\gamma_j} \setminus \Delta)$.

\begin{remark}\label{cancelation}

It is important to note that the inclusion $C^{[\Delta]} \hookrightarrow \operatorname{Gen}_\Delta$ does \emph{not} imply that every point in $\operatorname{Gen}_\Delta$ corresponds to an ideal $J$ with $v(J\setminus \{0\}) = \Delta$. 
This phenomenon is illustrated by the following example, adapted from \cite{oblomkov2012hilbert}. Let $\mathcal{O}_C = {\mathbb{C}}[[t^3, t^7]]$, whose value semigroup is $\Gamma = \langle 3,7 \rangle$. Consider the $\Gamma$-subsemimodule $\Delta = (6, 10)_{\Gamma} $, minimally generated by $6$ and $10$. The element
\[
J = \langle t^6 + t^7, t^{10}\rangle  \in \operatorname{Gen}_\Delta
.\] 
However, $v(J\setminus \{0\}) \neq \Delta$. Indeed, consider the combination:
\[
t^7 \cdot (t^6 + t^7) - t^3 \cdot t^{10} = t^{13} + t^{14} - t^{13} = t^{14} \in J.
\]
Thus, $v(t^{14}) = 14 \in v(J\setminus \{0\})$, but $14 \notin (6, 10 )_{\Gamma} $, since $(6,10 )_{\Gamma}  = \{6, 9, 10,12, 15, 16,\dots\}$ and $14$ is not in this semimodule. Hence, $v(J\setminus \{0\}) \supsetneq \Delta$, and $J \notin C^{[\Delta]}$.
\end{remark}

To fully determine
 $C^{[\Delta]}$, one must control the syzygies among the generators of the ideal, such as the one appearing in the previous remark. These syzygies can introduce elements of valuation outside $\Delta$, and thus must be constrained to ensure that $v(J\setminus \{0\}) = \Delta$.

Let $\gamma_1, \dots, \gamma_n$ be a fixed minimal system of generators of the $\Gamma$-subsemimodule $\Delta$. This choice determines a surjective map of $\mathbb{C}[\Gamma]$-modules:
\[
G \colon {\mathbb{C}}[\Gamma]^{\oplus n} \to {\mathbb{C}}[\Delta] := {\mathbb{C}}[\Gamma] \cdot \{ t^j \mid j \in \Delta \},
\quad
f_i e_i \mapsto f_i t^{\gamma_i},
\]
where $\{e_1, \dots, e_n\}$ is the standard basis, and the target is the ${\mathbb{C}}[\Gamma]$-submodule of ${\mathbb{C}}[[t]]$ generated by $\{t^j \mid j \in \Delta\}$.

We extend this to a presentation:
\begin{equation}\label{eq:syzygy-presentation}
{\mathbb{C}}[\Gamma]^{\oplus m} \xrightarrow{S} {\mathbb{C}}[\Gamma]^{\oplus n} \xrightarrow{G} {\mathbb{C}}[\Delta] \to 0,
\end{equation}
where $S$ denotes the composition ${\mathbb{C}}[\Gamma]^{\oplus m} \to \ker G \hookrightarrow {\mathbb{C}}[\Gamma]^{\oplus n}$. According to \parencite[Lemma~4]{piontkowski2007topology}, the kernel $\ker G$ is generated by homogeneous elements of the form
\[
(0, \dots, t^{b_{\gamma_i}}, 0, \dots, 0, -t^{b_{\gamma_{i'}}}, 0, \dots, 0),
\]
where $b_{\gamma_i} + \gamma_i = b_{\gamma_{i'}} + \gamma_{i'} =: \sigma_i$. We call $\sigma_i$ the \emph{order} of the $i$-th syzygy.

Writing $S = (s_1, \dots, s_m)$, each entry of $S$ is of the form
\[
(s_i)_j = u_i^j t^{\sigma_i - \gamma_j}, \quad u_i^j \in {\mathbb{C}}.
\]

Now, a point $\lambda \in \operatorname{Gen}_\Delta$ determines a lift
\[
\mathcal{G}_\lambda \in \operatorname{Hom}_{\mathcal{O}_C}(\mathcal{O}_C^{\oplus n}, \mathcal{O}_C),
\quad
(\mathcal{G}_\lambda)_j := \tau_{\gamma_j}(\lambda^\bullet_\bullet),
\]
where $\tau_{\gamma_j}$ are the deformed generators defined earlier. This lift replaces the monomial map $G$ with an actual generating set of an ideal in $\mathcal{O}_C$.

For integers $a < b$, define the open interval
\[
(a, b) := \{ c \in \mathbb{Z} \mid a < c < b \}.
\]
For a fixed integer $e$ with $0 \leq e < c(\Delta)$, we define
\[
\Delta_{> e, < c(\Delta)} := (e, c(\Delta)) \cap \Delta.
\]
In particular, for $s \in \Delta_{> 0, < c(\Delta)}$, we fix a decomposition
\[
s = \gamma_{g(s)} + \rho(s), \quad \rho(s) \in \Gamma,
\]
which defines a map
\[
g \colon \Delta_{> 0, < c(\Delta)} \to \{1, \dots, n\},
\]
assigning to each $s$ the index of the generator $\gamma_{g(s)}$ from which it arises.

Let $\operatorname{Syz}_\Delta$ be the affine space with coordinates
\[
\nu_{is}^{s - \sigma_i}, \quad \text{for } i = 1, \dots, m, \quad \sigma_i < s < c(\Delta), \quad s \in \Delta.
\]
To a closed point $\nu \in \operatorname{Syz}_\Delta$, we associate an $n \times m$ matrix $\mathcal{S}_\nu$ whose $i$-th column is given by
\[
(\mathcal{S}_\nu)_i^j = u_i^j \phi_{\sigma_i - \gamma_j} + \sum_{\substack{s \in \Delta_{> \sigma_i, < c(\Delta)} \\ g(s) = j}} \nu_{is}^{s - \sigma_i} \phi_{s - \gamma_j}.
\]

\begin{proposition}[\cite{oblomkov2018hilbert}, Proposition~12]\label{prop:OS12}
The subvariety of $\operatorname{Gen}_\Delta \times \operatorname{Syz}_\Delta$ defined by the condition
\[
\mathcal{G}_\lambda \circ \mathcal{S}_\nu \equiv 0 \mod t^{c(\Delta)}
\]
maps bijectively onto  $C^{[\Delta]}$. 
\end{proposition}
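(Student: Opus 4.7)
The plan is to construct a set-theoretic map $\Phi$ from the subvariety $V\subset\operatorname{Gen}_{\Delta}\times\operatorname{Syz}_{\Delta}$ cut out by $\mathcal{G}_{\lambda}\circ\mathcal{S}_{\nu}\equiv 0\pmod{t^{c(\Delta)}}$ to $C^{[\Delta]}$ by sending $(\lambda,\nu)$ to the ideal $J_{\lambda}=\langle \tau_{\gamma_{1}}(\lambda),\dots,\tau_{\gamma_{n}}(\lambda)\rangle$, and then to prove this map is bijective on closed points. The guiding idea is that the exponential construction automatically guarantees $\Delta\subseteq v(J_{\lambda}\setminus\{0\})$, while the displayed syzygy relation captures exactly the obstruction to having ``extra'' valuations below the conductor and outside $\Delta$.

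First I would verify that $\Phi(V)\subseteq C^{[\Delta]}$. Fix $(\lambda,\nu)\in V$. Since $\tau_{\gamma_{j}}(\lambda)$ has leading monomial $\phi_{\gamma_{j}}$, the inclusion $\Delta\subseteq v(J_{\lambda}\setminus\{0\})$ is immediate. For the reverse inclusion I would compute $\mathcal{G}_{\lambda}(\mathcal{S}_{\nu})$ column by column. The ``monomial part'' of the $i$-th column, $(u_{i}^{j}\phi_{\sigma_{i}-\gamma_{j}})_{j}$, maps under $\mathcal{G}_{\lambda}$ to an element $E_{i}(\lambda)\in J_{\lambda}$ whose leading terms cancel by the syzygy identity in $\mathbb{C}[\Gamma]$, leaving an expansion of the form $\sum_{s>\sigma_{i}}\alpha_{i,s}(\lambda)\,\phi_{s}$ in the chosen basis of $A$. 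The perturbation entries $\nu_{is}^{s-\sigma_{i}}\phi_{s-\gamma_{g(s)}}$ contribute, under $\mathcal{G}_{\lambda}$, corrections whose leading term is $\nu_{is}^{s-\sigma_{i}}\phi_{s}$ (here the monomial/$(p,q)$ hypothesis, giving $\phi_{a}\phi_{b}=\phi_{a+b}$, is used). The displayed congruence is then a triangular system in the $\phi_{s}$-coefficients which, solved order by order, forces $\alpha_{i,s}(\lambda)=0$ for every $s<c(\Delta)$ with $s\notin\Delta$, while the coefficients at $s\in\Delta$ are precisely recorded by the $\nu$-variables. Since $c(\Delta)\in\Delta$ and $\Delta+\Gamma\subseteq\Delta$, one has $t^{c(\Delta)}A\subseteq J_{\lambda}$, so nothing new can appear above the conductor either, giving $v(J_{\lambda}\setminus\{0\})=\Delta$.

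For surjectivity and injectivity I would start with an arbitrary $J\in C^{[\Delta]}$. The normalization discussed immediately before the proposition, together with \parencite[Theorem~27]{oblomkov2012hilbert}, determines a unique $\lambda\in\operatorname{Gen}_{\Delta}$ with $J_{\lambda}=J$. Applying $\mathcal{G}_{\lambda}$ to each monomial syzygy $s_{i}$ yields an element of $J$, whose expansion modulo $t^{c(\Delta)}$ in the basis $\{\phi_{s}\}$ is supported only on $s\in\Delta$ because $v(J\setminus\{0\})=\Delta$. Setting $\nu_{is}^{s-\sigma_{i}}$ equal to the (uniquely determined) coefficient at $\phi_{s}$ and propagating the choice order by order produces a unique $\nu\in\operatorname{Syz}_{\Delta}$ such that $(\lambda,\nu)\in V$ and $\Phi(\lambda,\nu)=J$. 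The uniqueness of both $\lambda$ and $\nu$ in this procedure yields injectivity.

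The main obstacle I expect is the combinatorial bookkeeping associated with the map $g\colon\Delta_{>0,<c(\Delta)}\to\{1,\dots,n\}$ and the order-by-order solvability of the truncated system. One has to check that the homogeneous generators of $\ker G$ produced by \parencite[Lemma~4]{piontkowski2007topology} match bijectively with the $\nu$-variables, so that each absorbable term of $E_{i}(\lambda)$ is adjusted by exactly one perturbation, and that the truncation at $t^{c(\Delta)}$ is sufficient, i.e.\ that higher-order obstructions are automatic from $t^{c(\Delta)}A\subseteq J_{\lambda}$. Keeping this combinatorial pairing consistent across all syzygies — especially when several $s$ with different $\sigma_{i}$ share the same generator index $g(s)$ — is the technical heart of the argument, and is where the monomial semigroup hypothesis is genuinely used.
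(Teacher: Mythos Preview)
The paper does not prove this proposition; it is quoted from \cite{oblomkov2018hilbert} and used as input for the subsequent analysis of the strata, so there is no in-paper argument to compare against.

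Your treatment of surjectivity and injectivity is sound: given $J\in C^{[\Delta]}$ the normalisation fixes $\lambda$ uniquely, and expanding each $\mathcal G_\lambda(s_i)\in J$ in the basis $\{\phi_s\}$ order by order determines the $\nu$-coordinates uniquely and lands in $V$. The genuine gap is in the forward containment $\Phi(V)\subseteq C^{[\Delta]}$. What you actually establish for $(\lambda,\nu)\in V$ is that each of the $m$ specific elements $(\mathcal G_\lambda\circ\mathcal S_\nu)_i\in J_\lambda$ lies in $I(c(\Delta))$; from this you jump directly to $v(J_\lambda\setminus\{0\})=\Delta$. But an arbitrary $A$-linear combination $\sum_j h_j\,\tau_{\gamma_j}(\lambda)$ with valuation in $\Gamma\setminus\Delta$ need not coincide with one of these finitely many lifted syzygies, so controlling those alone does not obviously control all of $v(J_\lambda\setminus\{0\})$. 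The obstacle you label ``combinatorial bookkeeping'' is in fact structural: because the $s_i$ generate $\ker G$ over $\mathbb C[\Gamma]$, the columns $(\mathcal S_\nu)_i$ generate over $A/I(c(\Delta))$ a submodule of $\ker\bar{\mathcal G}_\lambda$ whose associated graded contains $(\ker G)_{<c(\Delta)}$, hence of $\mathbb C$-dimension at least $\dim(\ker G)_{<c(\Delta)}$. Rank--nullity on $(A/I(c(\Delta)))^n$ then forces $\dim\operatorname{im}\bar{\mathcal G}_\lambda=\#(\Delta\cap[0,c(\Delta)))$, and together with the easy inclusion $\Delta\subseteq v(J_\lambda\setminus\{0\})$ this gives equality. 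Without some such filtration or dimension count, your triangular analysis only shows that $V$ projects bijectively onto a closed subset of $\operatorname{Gen}_\Delta$, not that this subset is $\operatorname{Exp}_\Delta^{-1}(C^{[\Delta]})$.
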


Consequently, $C^{[\Delta]}$ can be realized as a closed subvariety of $\operatorname{Gen}_\Delta$, defined by the ideal $\mathcal{I} \subset {\mathbb{C}}[\lambda_{\bullet}, \nu^{\bullet}_{\bullet,\bullet}]$ generated by the entries of the composition $\mathcal{G}_\lambda \circ \mathcal{S}_\nu$, truncated at order $c(\Delta)$.

Explicitly, for each syzygy index $i$, the $i$-th component of the composition is given by:
\[
(\mathcal{G}_\lambda \circ \mathcal{S}_\nu)_i = \sum_{j=1}^n (\mathcal{G}_\lambda)_j \cdot (\mathcal{S}_\nu)_i^j.
\]

Expanding the product and collecting terms up to order $< c(\Delta)$, we obtain:
\[
\begin{aligned}
& \sum_j\left(u_i^j \phi_{\sigma_i-\gamma_j} \phi_{\gamma_j}+\sum_{s\in\Delta_{> \sigma_{i},< c(\Delta)}, g(s)=j}\nu_{i s}^{s-\sigma_i} \phi_{s-\gamma_j} \phi_{\gamma_j}+\sum_{k \in \Gamma_{>\gamma_j}\setminus \Delta} u_i^j \lambda_j^{k-\gamma_j} \phi_{\sigma_i-\gamma_j} \phi_k\right. \\
& \left.+\sum_{\substack{s\in \Delta_{> \sigma_{i},< c(\Delta)}\\
g(s)=j\\k \in \Gamma_{>\gamma_j}\setminus \Delta}}\nu_{i s}^{s-\sigma_i} \lambda_j^{k-\gamma_j} \phi_{s-\gamma_j} \phi_k\right) \\
&
\end{aligned}
\]

All terms are evaluated modulo $t^{c(\Delta)}$, and the vanishing of these expressions defines the ideal $\mathcal{I}$ cutting out the fiber product. The projection to $\lambda$-coordinates then yields the defining equations of $C^{[\Delta]}$ as a constructible (or closed) subset of $\operatorname{Gen}_\Delta$.

\noindent
 
We expand $(\mathcal{G}_{\lambda}\circ \mathcal{S}_{\nu})_{i}$ in the basis $\phi_{k}$ (as ${\mathbb{C}}$-basis) and denote by $Eq_{i}^{r}$ the coefficient of $\phi_{r+\sigma_{i}}$. Note that $Eq_{i}^{r}$ does not appear (or is trivial) if $r+\sigma_{i}\notin\Gamma$ or $r+\sigma_{i}\geq c(\Delta)$. The nontrivial equations are of the following form:
\[\label{eqform}
    Eq_{i}^{r}=L_{i}^{r}+\mbox{ terms in } \lambda^{<r},\nu^{<r},
\]   
where
\[L_{i}^{r}:=\delta_{\Delta\cap (\sigma_{i},c(\Delta))}(r+\sigma_{i})\nu^{r}_{i,r+\sigma_{i}}+\sum_{j=1}^{n}\delta_{\Gamma\setminus\Delta}(r+\gamma_{j})u_{i}^{j}\lambda_{j}^{r},\]
$\delta_{\Delta\cap (\sigma_{i},c(\Delta))}$ and $\delta_{\Gamma\setminus\Delta}$ being indicator functions. Note that the polynomials $L_{i}^{r}$ are linear.\\

\begin{remark}[\cite{oblomkov2018hilbert}]
Let $C$ be a plane curve singularity defined by $x^p = y^q$ with $\gcd(p,q) = 1$. Then, for all $r$ and $i$, the linear parts $L_i^r$ of the equations $Eq_i^r$ are linearly independent.
\end{remark}

We now introduce the definition of a discrete syzygy for a $\Gamma$-subsemimodule, which is associated with the presentation given in \eqref{eq:syzygy-presentation}.

For $\Delta$ a $\Gamma-$subsemimodule of $\Gamma,$ we denote by $T_\Delta=\{\gamma_{1}<\dots<\gamma_{n}\}$  its minimal system of generators.\\

For a $\Gamma$-subsemimodule $\Delta \subset \Gamma$, let 
\[
T_\Delta = \{\gamma_1 < \gamma_2 < \cdots < \gamma_n\}
\]
denote its unique minimal system of generators as a $\Gamma$-module.

\begin{definition}\label{syzygy and aug syzygy}
1. We define the Augmented Syzygy  of  $\Delta$  as 
\[
ASyz(\Delta) := \{( \gamma_{i_{1}},\gamma_{j_{1}},  \sigma_{1}), \dots, (\gamma_{i_{m}},\gamma_{j_{m}},  \sigma_{m}) \}  / \sim
\]
where $\sim$ is the equivalence relation given by $(\gamma_a, \gamma_b, \sigma) \sim (\gamma_b, \gamma_a, \sigma)$ and $(\gamma_{i_{1}},\gamma_{j_{1}},  \sigma_{1}), \dots, (\gamma_{i_{m}},\gamma_{j_{m}},  \sigma_{m})$ be the tuples corresponding to the syzygy $S=(s_{1},\dots,s_m)$  in the presentation \eqref{eq:syzygy-presentation}.
\end{definition}

\noindent
2. We define the \emph{syzygy set} of $\Delta$ as
\[
\operatorname{Syz}(\Delta) := \left\{ \sigma \in \Delta \mid 
\begin{array}{c}
\exists\, \gamma_{i_1} \neq \gamma_{i_2} \in T_\Delta, \text{ and } b_1, b_2 \in \Gamma, \\
\text{such that } \sigma = \gamma_{i_1} + b_1 = \gamma_{i_2} + b_2
\end{array}
\right\}.
\]
This set consists of all elements in $\Delta$ that can be expressed in at least two distinct ways as a sum of a generator $\gamma_i$ and an element of $\Gamma$.

It follows from the definition that $\operatorname{Syz}(\Delta)$ is itself a $\Gamma$-subsemimodule of $\Delta$. Let $\{\sigma_1, \dots, \sigma_{m^{\prime}}\}$ denote its minimal system of generators.\\

\begin{proposition}\cite{oblomkov2018hilbert} \label{gamma = p,q, equal ASyz TSyz}
We consider $\Gamma = \langle p,q \rangle$ with $p<q$ coprime. Let $\Delta$ be a subsemimodule of $\Gamma$.
There exists a bijection between $ASyz(\Delta)$ and $T_{Syz(\Delta)}$ — the minimal generating system of $Syz(\Delta)$. Specifically, the projection
 \[ASyz(\Delta) \rightarrow T_{Syz(\Delta)}     .\]
 \[(\gamma_{i},\gamma_{j},\sigma)  \mapsto \sigma    \]
 is a bijection. Furthermore, the number of minimal generators of $Syz(\Delta)$ is the same as that of $\Delta$.

\begin{proof}
First, we show that the map is well-defined. Let $(\gamma_{i}, \gamma_{j}, \sigma) \in ASyz(\Delta)$. By definition, there exist $b_{\gamma_{i}}$ and $b_{\gamma_{j}}$ such that the vector
\[
(0, \dots, t^{b_{\gamma_{i}}}, 0, \dots, 0, -t^{b_{\gamma_{j}}}, 0, \dots, 0)
\]
is a minimal generator of the kernel of $G$ in the $\mathbf{C}[\Gamma]$-module presentation \eqref{eq:syzygy-presentation}. Here, $\sigma$ satisfies the relation $\sigma = b_{\gamma_{i}} + \gamma_{i} = b_{\gamma_{j}} + \gamma_{j}$. Consequently, $\sigma$ is a minimal generator of $Syz(\Delta)$. If this were not the case, it would contradict the fact that the vector $(0, \dots, t^{b_{\gamma_{i}}}, 0, \dots, 0, -t^{b_{\gamma_{j}}}, 0, \dots, 0)$ is a minimal generator of the kernel. So the map is well-defined.

By the Chinese remainder theorem (or see also Section~5 of \cite{oblomkov2012hilbert}), every element $m \in \Gamma$ admits a unique presentation of the form $m = aq + bp$ with $0 \leq a < p$.

Assume $\Delta$ is minimally generated by $\{\gamma_{1},\dots, \gamma_{n}\}$ as a $\Gamma$-module. We introduce a total ordering on the generators as $\gamma_1 \prec \gamma_2 \prec \cdots \prec \gamma_n$. This ordering is determined by their coordinates: writing $\gamma_i = a_i q + b_i p$, we require $a_i \leq a_{i+1}$ and $b_i \geq b_{i+1}$ for $1 \leq i \leq n-1$. Moreover, we take the indices modulo $n$, i.e., $\gamma_i = \gamma_{i+n}$.

We define $\sigma_i := a_{i+1} q + b_i p$. Then $\sigma_i$ can be expressed in terms of $\gamma_{i}$ and $\gamma_{i+1}$ as:
\[
\sigma_i = \gamma_{i} + (a_{i+1}-a_{i})q = \gamma_{i+1} + (b_{i}-b_{i+1})p.
\]
Thus, $\{ (\gamma_{i},\gamma_{i+1},\sigma_{i}) \mid i = 1,\dots, n \} \subseteq ASyz(\Delta)$.

Now suppose $\sigma_{i}$ can also be generated by some $\gamma_{k}$ with $k \neq i, i+1$. Write $\gamma_{k} = a_{k}q + b_k p$. Without loss of generality, we may assume $a_k > a_{i+1}$ and $b_k < b_{i+1}$ (or symmetrically $a_k < a_i$ and $b_k > b_i$).  
However,  $\gamma_k$ and $\gamma_i$ is $\sigma_{i,k} := a_k q + b_i p$ is one of the minimal generators of $Syz((\gamma_{i},\gamma_k)_{\Gamma})$. Then
\[
\sigma_{i,k} - \sigma_i = a_k q + b_i p - a_{i+1}q - b_i p = (a_k - a_{i+1})q \in \Gamma_{> 0}.
\]
However  $\sigma_i $ is also an element in   $Syz((\gamma_{i},\gamma_k)_{\Gamma})$. It contradicts the fact that $\sigma_{i,k}$ is one of the minimal generators of  $Syz((\gamma_{i},\gamma_k)_{\Gamma})$. Thus  $\sigma_i$ can not be generated by $\gamma_k$. Therefore, we conclude that  
\[
ASyz(\Delta) = \{ (\gamma_{i},\gamma_{i+1},\sigma_{i}) \mid i = 1,\dots, n \}.
\]
So the projection is injective. The surjectivity is obvious. Then $ASyz(\Delta)\cong T_{Syz(\Delta)}$.
\end{proof}
\end{proposition}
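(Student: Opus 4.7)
The plan is to exploit the unique coordinate representation available for $\Gamma = \langle p, q \rangle$: by the Chinese remainder theorem, every $m \in \Gamma$ has a unique presentation $m = aq + bp$ with $0 \leq a < p$ and $b \geq 0$. Writing each minimal generator $\gamma_i \in T_\Delta$ as $\gamma_i = a_i q + b_i p$, if $a_i \leq a_j$ and $b_i \leq b_j$ for some $i \neq j$, then $\gamma_j \in \gamma_i + \Gamma$, contradicting minimality of $T_\Delta$. After relabeling we may therefore assume
\[
a_1 < a_2 < \cdots < a_n, \qquad b_1 > b_2 > \cdots > b_n,
\]
arranging the minimal generators along a descending staircase in the $(a,b)$-plane.

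First I would verify that the projection $(\gamma_i, \gamma_j, \sigma) \mapsto \sigma$ lands in $T_{Syz(\Delta)}$. A kernel generator $(0,\dots,t^{b_i},\dots,-t^{b_j},\dots,0)$ of the presentation \eqref{eq:syzygy-presentation} is minimal exactly when the common value $\sigma = b_i + \gamma_i = b_j + \gamma_j$ admits no decomposition $\sigma = \sigma' + c$ with $\sigma' \in Syz(\Delta)$ and $c \in \Gamma_{>0}$; otherwise one could factor out $t^c$ to obtain a strictly smaller kernel element. So $\sigma$ is automatically a minimal generator of $Syz(\Delta)$ and well-definedness follows.

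Next I would construct $n$ explicit elements of $ASyz(\Delta)$. Taking indices cyclically modulo $n$, set $\sigma_i := a_{i+1}q + b_i p$ for each consecutive pair $(\gamma_i, \gamma_{i+1})$. A direct computation gives
\[
\sigma_i = \gamma_i + (a_{i+1} - a_i)q = \gamma_{i+1} + (b_i - b_{i+1})p,
\]
with both displacements strictly positive elements of $\Gamma$, so $(\gamma_i, \gamma_{i+1}, \sigma_i) \in ASyz(\Delta)$ and $\sigma_i \in Syz(\Delta)$. A small additional verification is needed for the wrap-around index $i = n$, where the \emph{next} generator is $\gamma_1$ regarded cyclically in $(a,b)$-coordinates, but the same local pairwise computation applies.

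The main difficulty will be to show these exhaust $ASyz(\Delta)$ and that each $\sigma_i$ is minimal in $Syz(\Delta)$. The plan is a contradiction argument controlled by the staircase. Suppose $\sigma_i$ also admits $\sigma_i = \gamma_k + c$ with $k \notin \{i, i+1\}$. Up to symmetry take $a_k > a_{i+1}$, forcing $b_k < b_{i+1} < b_i$. Then the classical minimal pairwise syzygy between $\gamma_i$ and $\gamma_k$ in $\langle p,q\rangle$ sits at $\sigma_{i,k} := a_k q + b_i p$; yet $\sigma_{i,k} - \sigma_i = (a_k - a_{i+1})q \in \Gamma_{>0}$, so $\sigma_i < \sigma_{i,k}$ already lies in $(\gamma_i + \Gamma) \cap (\gamma_k + \Gamma)$, contradicting the minimality of $\sigma_{i,k}$. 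Hence no such $k$ exists, so $ASyz(\Delta) = \{(\gamma_i, \gamma_{i+1}, \sigma_i)\}_{i=1}^n$; injectivity of the projection is automatic since distinct indices yield distinct $(a,b)$-coordinates, surjectivity is built into the construction, and the cardinality equality $\#T_{Syz(\Delta)} = n$ follows.
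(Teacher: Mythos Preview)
Your proposal follows essentially the same strategy as the paper's proof: both use the unique $(a,b)$-coordinate representation of elements in $\Gamma = \langle p,q \rangle$ to arrange the minimal generators along a staircase, define $\sigma_i = a_{i+1}q + b_i p$ for consecutive pairs, and derive a contradiction from the minimality of the pairwise syzygy $\sigma_{i,k} = a_k q + b_i p$ when a third generator $\gamma_k$ is assumed to reach $\sigma_i$. Your version is in fact slightly more careful about the strictness of the inequalities $a_i < a_{i+1}$, $b_i > b_{i+1}$ and explicitly flags the cyclic wrap-around case, which the paper handles only implicitly.
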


Proposition \ref{gamma = p,q, equal ASyz TSyz} allows us to equate $ASyz(\Delta)$ with $T_{Syz(\Delta)}$ for the semigroup $\Gamma = \langle p, q \rangle$ in the rest of the paper. Now we will study the canonical morphism between $C^{[\Delta]}$ and $C^{[m(\Delta)]}$ given by an edge, $\Delta \rightarrow m(\Delta)$,  in the tree $G_{\Gamma}$.

Let $(C,O)$ be the germ of a unibranch curve singularity with value semigroup $\Gamma$, where $\Gamma$ is either monomial in the sense of \cite{pfister1992reduced} (see Section  \ref{Appendix}) or of the form $\langle p, q \rangle$ with $\gcd(p,q) = 1$. We write $\Gamma = \langle \alpha_1, \dots, \alpha_e \rangle$. Let $\Delta = (\gamma_1, \dots, \gamma_n)_{\Gamma}$ be a $\Gamma$-subsemimodule, and denote by $\operatorname{Syz}(\Delta) = (\sigma_1, \dots, \sigma_m)_{\Gamma}$ its syzygy subsemimodule. Let
\[
\gamma_\Delta := \max(\Gamma \setminus \Delta), \quad m(\Delta) := \Delta \cup \{\gamma_\Delta\}.
\]

For any ideal $I \in C^{[\Delta]}$, there exists a system of generators of $I$ of the form $\{f_{\gamma_1}(t), \dots, f_{\gamma_n}(t)\}$, where each generator is normalized as
\[
f_{\gamma_j}(t) = t^{\gamma_j} + \sum_{k \in \Gamma_{>\gamma_j} \setminus \Delta} \lambda_j^{k - \gamma_j} t^k,
\]
with coefficients $\lambda_j^{k - \gamma_j} \in {\mathbb{C}}$. This normalization is unique under the condition that no term in the sum has valuation in $\Delta$.

There exists a canonical morphism
\[\label{eq:canonical-map}
\pi_\Delta \colon C^{[\Delta]} \longrightarrow C^{[m(\Delta)]}, \quad
\langle f_{\gamma_1}(t), \dots, f_{\gamma_n}(t) \rangle \longmapsto \langle f_{\gamma_1}(t), \dots, f_{\gamma_n}(t), \phi_{\gamma_\Delta} \rangle.
\]
This map sends an ideal with value semimodule $\Delta$ to the ideal obtained by adjoining a generator of valuation $\gamma_\Delta$, thereby increasing the value set to $m(\Delta)$.

\begin{lemma}\label{condiction is trivial}
Let $C$ be a unibranch plane curve singularity defined by $x^p = y^q$ with $\gcd(p,q) = 1$, and let $\Delta$ be a $\Gamma$-subsemimodule of $\Gamma = \langle p, q \rangle$. Let $\sigma_i$ be an element of the minimal generating set $T_{\operatorname{Syz}(\Delta)} = \{\sigma_1, \dots, \sigma_m\}$ of the syzygy subsemimodule $\operatorname{Syz}(\Delta)$, and suppose that
\[
\Gamma_{>\sigma_i} \setminus \Delta = \varnothing.
\]
Then the equation
\[
(\mathcal{G}_\lambda \circ \mathcal{S}_\nu)_i = \sum_j (\mathcal{G}_\lambda)_j \cdot (\mathcal{S}_\nu)_i^j \equiv 0 \mod t^{c(\Delta)}
\]
imposes no condition on the parameters $(\lambda, \nu)$, i.e., it is automatically satisfied. In other words, this syzygy condition is trivial in the defining ideal of $C^{[\Delta]}$.
\end{lemma}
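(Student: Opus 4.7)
The plan is to expand $(\mathcal{G}_\lambda \circ \mathcal{S}_\nu)_i$ in the basis $\{\phi_k\}_{k \in \Gamma}$ modulo $t^{c(\Delta)}$ to obtain the scalar coefficient equations $Eq_i^r = 0$ for each $r \geq 0$ with $r + \sigma_i \in \Gamma$ and $r + \sigma_i < c(\Delta)$, then to show that for $r = 0$ the equation vanishes identically while for every $r > 0$ the equation is linear in a distinguished $\nu$-variable with unit coefficient. A straightforward recursion on $r$ then allows the elimination of every $\nu$-variable $\nu_{i,s}^{s - \sigma_i}$ arising from the $i$-th syzygy in terms of the $\lambda$-parameters and the previously treated $\nu$-variables, leaving no residual condition on $\lambda$; this is precisely the sense in which the $i$-th syzygy is trivial in the defining ideal of $C^{[\Delta]} \subseteq \operatorname{Gen}_\Delta$.

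The base case $r = 0$ is immediate. The only terms of $\sum_j (\mathcal{G}_\lambda)_j (\mathcal{S}_\nu)_i^j$ contributing to the coefficient of $\phi_{\sigma_i}$ are the $u_i^j \phi_{\sigma_i - \gamma_j}\phi_{\gamma_j} = u_i^j \phi_{\sigma_i}$ (using $\phi_a \phi_b = \phi_{a+b}$ in the $(p,q)$ or monomial-semigroup setting), whose sum $\bigl(\sum_j u_i^j\bigr)\phi_{\sigma_i}$ vanishes by the defining syzygy relation built into the presentation \eqref{eq:syzygy-presentation}.

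The actual content is the case $r > 0$. Here the hypothesis $\Gamma_{>\sigma_i} \setminus \Delta = \varnothing$ enters as follows: if $r + \sigma_i \in \Gamma$, then $r + \sigma_i > \sigma_i$ forces $r + \sigma_i \in \Delta$, and combined with $r + \sigma_i < c(\Delta)$ this places $r + \sigma_i$ in $\Delta \cap (\sigma_i, c(\Delta))$. Consulting the formula for the linear part $L_i^r$, the indicator $\delta_{\Delta \cap (\sigma_i, c(\Delta))}(r + \sigma_i)$ evaluates to $1$, so $\nu_{i, r + \sigma_i}^r$ appears in $L_i^r$ with coefficient $1$. The tail $Eq_i^r - L_i^r$ collects only bilinear monomials $\nu_{is}^{s - \sigma_i} \lambda_j^{k - \gamma_j}$ with $(s - \sigma_i) + (k - \gamma_j) = r$ and both factors strictly positive, so each factor carries a superscript strictly less than $r$.

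Induction on $r$ from the smallest admissible value upward then solves $Eq_i^r = 0$ uniquely for $\nu_{i, r + \sigma_i}^r$ as a polynomial in the $\lambda$-parameters and the $\nu$-variables fixed at earlier stages, hence the entire system associated with $\sigma_i$ is consistent for every $\lambda \in \operatorname{Gen}_\Delta$ and imposes no condition on $\lambda$ once the auxiliary $\nu$'s are eliminated. The main technical point to carry out carefully is the index bookkeeping in the bilinear tail---verifying that no $\nu$-variable of superscript $\geq r$ intrudes into $Eq_i^r$---which is guaranteed by the strict positivity constraints $s - \sigma_i > 0$ and $k - \gamma_j > 0$ baked into the definitions of $\operatorname{Syz}_\Delta$ and $\operatorname{Gen}_\Delta$.
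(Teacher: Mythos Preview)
Your proof is correct and follows essentially the same idea as the paper's. The paper gives a one-line conceptual argument---the hypothesis forces every valuation produced by the $i$-th syzygy to lie in $\Delta$, so the cancellation phenomenon of Remark~\ref{cancelation} cannot occur---while you unfold this explicitly by showing that each equation $Eq_i^r$ for $r>0$ contains the variable $\nu_{i,r+\sigma_i}^r$ with unit coefficient (precisely because $r+\sigma_i\in\Delta$) and can therefore be solved recursively without constraining $\lambda$.
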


\begin{proof} 
This follows from Remark \ref{cancelation}  and the discussion after: indeed,  the valuation $\gamma$ of a syzygy associated with $\sigma_i$ is larger than $\sigma_i$; the hypothesis  $\Gamma_{>\sigma_{i}}\setminus \Delta= \varnothing$ ensures that $\gamma \in \Delta$ and so the phenomenon  in Remark \ref{cancelation}  can not happen.

\end{proof}

\begin{remark}\label{generators-not-in-intersection}
Let $\Gamma = \langle p, q \rangle$ be a numerical semigroup with $\gcd(p,q) = 1$, and let $\Delta$ be a $\Gamma$-subsemimodule. Consider $m(\Delta) = \Delta \cup \{\gamma_\Delta\}$, where $\gamma_\Delta = \max(\Gamma \setminus \Delta)$.

\begin{enumerate}
    \item The minimal generating set of $m(\Delta)$ satisfies
    \[
    T_{m(\Delta)} \setminus T_{\Delta} = \{\gamma_\Delta\}.
    \]
    In particular, since $\gamma_\Delta \notin \Delta$ and all elements of $\Gamma$ strictly greater than $\gamma_\Delta$ belong to $\Delta$ (by maximality), we have
    \[
    \Gamma_{> \gamma_\Delta} \setminus \Delta = \varnothing.
    \]

    \item For any generator $\gamma \in T_{\Delta} \setminus T_{m(\Delta)}$, there exists $x \in \{p, q\}$ such that
    \[
    \gamma = \gamma_\Delta + x.
    \]
    In particular, such $\gamma$ lies in $\Gamma$, and since $\gamma > \gamma_\Delta$, the maximality of $\gamma_\Delta$ implies that $\gamma \in \Delta$. Moreover, there are no gaps in $\Gamma \setminus \Delta$ between $\gamma$ and the conductor, so
    \[
    \Gamma_{> \gamma} \setminus \Delta = \varnothing.
    \]
\end{enumerate}
\end{remark}

\subsection{Proof of the main theorem}
The following is reformulation of Theorem \ref{thm:fibration}.
\begin{theorem}\label{piecewise fibration} Let $C$ be the plane curve singularity defined by $x^p = y^q$ with $\gcd(p,q) = 1$, and let $\Gamma = \langle p, q \rangle$ be its value semigroup. Let $\Delta = (\gamma_1, \dots, \gamma_n)_\Gamma$ be a $\Gamma$-subsemimodule with minimal generating set $T_\Delta = \{\gamma_1 < \cdots < \gamma_n\}$, and let $\operatorname{Syz}(\Delta) = (\sigma_1, \dots, \sigma_n)_\Gamma$ be the $\Gamma$-subsemimodule of syzygies of $\Delta$. Then the canonical morphism \[\pi_\Delta \colon C^{[\Delta]} \longrightarrow C^{[m(\Delta)]}, \quad I \mapsto I + \langle \phi_{\gamma_\Delta}\rangle ,\] is a piecewise trivial fibration over its image. The image is the locally closed subvariety of $C^{[m(\Delta)]}$ defined by the vanishing of the linear parts 
\[\left \langle L_i^{c(\Delta) - 1 - \sigma_i} \;\middle|\; \sigma_i \in T_{\operatorname{Syz}(\Delta)} \cap T_{\operatorname{Syz}(m(\Delta))},\ \sigma_i < \gamma_{\Delta}\right\rangle. \]
where $L_i^{c(\Delta)-1-\sigma_i}$ denotes the coefficient of $t^{\sigma_i + k}$ for $k < \gamma_{\Delta} - \sigma_i$ in the expansion of the $i$-th syzygy condition, truncated at order $\gamma_{\Delta}$. The fiber of $\pi_\Delta$ over each point in the image is isomorphic to an affine space \[\mathbb{A}^{B(\Delta)}, \quad \text{where} \quad B(\Delta) = \# \left\{ \gamma_i \in T_\Delta \mid \gamma_i < \gamma_\Delta \right\}.\] \end{theorem}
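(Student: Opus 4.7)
The plan is to work in the Oblomkov-Rasmussen-Shende affine coordinates of Proposition~\ref{prop:OS12} and realise $\pi_\Delta$ as a coordinate projection forgetting exactly $B(\Delta)$ variables. First I would compare the parameter spaces $\operatorname{Gen}_\Delta$ and $\operatorname{Gen}_{m(\Delta)}$. By Remark~\ref{generators-not-in-intersection}, any $\gamma\in T_\Delta\setminus T_{m(\Delta)}$ satisfies $\gamma>\gamma_\Delta$ and $\Gamma_{>\gamma}\setminus\Delta=\varnothing$, so its normalised generator is simply $\phi_\gamma$ and carries no $\lambda$-coordinate; the new generator $\gamma_\Delta\in T_{m(\Delta)}\setminus T_\Delta$ likewise carries no $\mu$-coordinate since $\Gamma_{>\gamma_\Delta}\setminus m(\Delta)=\varnothing$. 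For $\gamma_j\in T_\Delta\cap T_{m(\Delta)}$ one has $\Gamma_{>\gamma_j}\setminus\Delta=(\Gamma_{>\gamma_j}\setminus m(\Delta))\sqcup\{\gamma_\Delta\}$ whenever $\gamma_j<\gamma_\Delta$. Thus the natural identification $\mu_j^{k-\gamma_j}=\lambda_j^{k-\gamma_j}$ pairs every coordinate of $\operatorname{Gen}_{m(\Delta)}$ with one of $\operatorname{Gen}_\Delta$, and the remaining unpaired $\operatorname{Gen}_\Delta$-coordinates are precisely the $B(\Delta)$ \emph{fiber coordinates} $\lambda_j^{\gamma_\Delta-\gamma_j}$ for $\gamma_j\in T_\Delta$ with $\gamma_j<\gamma_\Delta$.

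Next I would analyse which syzygy equations transfer between $C^{[\Delta]}$ and $C^{[m(\Delta)]}$. A syzygy $\sigma\in T_{\operatorname{Syz}(m(\Delta))}\setminus T_{\operatorname{Syz}(\Delta)}$ necessarily involves the new generator $\gamma_\Delta$, so $\sigma>\gamma_\Delta$, and Lemma~\ref{condiction is trivial} shows its equation is trivial; the same argument kills $\sigma_i\geq\gamma_\Delta$ in $T_{\operatorname{Syz}(\Delta)}$. Hence only $\sigma_i\in T_{\operatorname{Syz}(\Delta)}\cap T_{\operatorname{Syz}(m(\Delta))}$ with $\sigma_i<\gamma_\Delta$ contribute, and Proposition~\ref{gamma = p,q, equal ASyz TSyz} provides the explicit bookkeeping for this intersection. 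For each such $\sigma_i$, the equations $Eq_i^r$ with $r+\sigma_i<c(m(\Delta))$ coincide on both sides under the identification $\mu=\lambda$ and are automatic on the image, while the equations with $r+\sigma_i\in\Gamma\cap[c(m(\Delta)),c(\Delta))$ are new. At the extremal index $r=c(\Delta)-1-\sigma_i$ the $\nu$-term of $L_i^r$ vanishes because $c(\Delta)-1\notin\Delta$, so the surviving expression is purely in $\lambda$; after substituting $\mu=\lambda$ it becomes a linear form on $C^{[m(\Delta)]}$. By the linear-independence property of the $L_i^r$ in the $(p,q)$ case, these forms are independent and their common vanishing cuts out the image of $\pi_\Delta$ exactly as asserted.

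Finally I would verify that over each $J$ in the image the fiber is $\mathbb{A}^{B(\Delta)}$. No remaining syzygy equation constrains the fiber coordinates linearly, because the potentially offending linear parts were consumed in defining the image; the residual nonlinear terms are polynomially solvable in the fiber coordinates, giving each fiber an honest affine-space structure. Piecewise triviality is obtained by stratifying the image according to the non-vanishing of leading coefficients in this explicit solution, yielding a trivial fibration over each stratum. The main obstacle is the syzygy bookkeeping: one must track precisely how $T_{\operatorname{Syz}(\Delta)}$ and $T_{\operatorname{Syz}(m(\Delta))}$ differ under passage from $\Delta$ to $m(\Delta)$ and confirm that the symmetric differences contribute only trivial equations. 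The linear-independence input from \cite{oblomkov2018hilbert}, specific to the $(p,q)$ case, is the decisive technical tool that upgrades the set-theoretic description of the fiber to a genuine affine-space structure rather than a mere affine cone.
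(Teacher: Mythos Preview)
Your proposal is correct and follows essentially the same route as the paper: you compare $\operatorname{Gen}_\Delta$ with $\operatorname{Gen}_{m(\Delta)}$ via Remark~\ref{generators-not-in-intersection} to isolate the $B(\Delta)$ fiber coordinates $\lambda_j^{\gamma_\Delta-\gamma_j}$, use Lemma~\ref{condiction is trivial} and the argument that syzygies in the symmetric difference of $T_{\operatorname{Syz}(\Delta)}$ and $T_{\operatorname{Syz}(m(\Delta))}$ exceed $\gamma_\Delta$ and are therefore trivial, and invoke the linear independence of the $L_i^r$ from \cite{oblomkov2018hilbert} to conclude. The only organisational difference is that the paper splits into two cases according to whether $c(m(\Delta))<c(\Delta)$ (i.e.\ $\gamma_\Delta=c(\Delta)-1$) or $c(m(\Delta))=c(\Delta)$; in the second case the extra $\nu$-variable $\nu_{i\gamma_\Delta}^{\gamma_\Delta-\sigma_i}$ appearing in $\operatorname{Syz}_{m(\Delta)}$ absorbs what would otherwise be an image constraint, and the paper makes this explicit rather than folding it into a unified argument.
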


\begin{proof}

We first prove the following claim: only the elements in $T_{\operatorname{Syz}(\Delta)} \cap T_{\operatorname{Syz}(m(\Delta))}$ contribute nontrivially to the defining equations of $C^{[\Delta]}$ in $\operatorname{Gen}_{\Delta} \times \operatorname{Syz}_{\Delta}$ and to those of $C^{[m(\Delta)]}$ in $\operatorname{Gen}_{m(\Delta)} \times \operatorname{Syz}_{m(\Delta)}$.

Indeed, on one hand, suppose $\sigma \in T_{\operatorname{Syz}(\Delta)} \setminus T_{\operatorname{Syz}(m(\Delta))}$. Then there exists $\gamma \in T_{\Delta} \setminus T_{m(\Delta)}$ such that $\sigma$ is generated by $\gamma$, i.e., $\sigma = \gamma + b$ for some $b \in \Gamma$. By Remark~\ref{generators-not-in-intersection}, we have $\gamma = \gamma_\Delta + x$ for some $x \in \{p, q\}$. Hence,
\[
\sigma = \gamma_\Delta + x + b > \gamma_\Delta.
\]
Since $\gamma_\Delta = \max(\Gamma \setminus \Delta)$, all elements of $\Gamma$ strictly greater than $\gamma_\Delta$ belong to $\Delta$. Therefore,
\[
\Gamma_{> \sigma} \setminus \Delta = \varnothing.
\]

On the other hand, let $\sigma' \in T_{\operatorname{Syz}(m(\Delta))} \setminus T_{\operatorname{Syz}(\Delta)}$. Then $\sigma'$ is a minimal syzygy generator in $m(\Delta)$ that does not arise in $\Delta$, so it must involve $\gamma_\Delta$. In particular, $\sigma' = \gamma_\Delta + b$ for some $b \in \Gamma$, and thus $\sigma' > \gamma_\Delta$. Since $\gamma_\Delta \notin \Delta$ but $\sigma' \in m(\Delta)$, and all elements of $\Gamma$ greater than $\gamma_\Delta$ are in $\Delta$ (hence in $m(\Delta)$), we have
\[
\Gamma_{> \sigma'} \setminus m(\Delta) = \varnothing.
\]

In both cases, the condition $(\mathcal{G}_\lambda \circ \mathcal{S}_\nu)_i \equiv 0 \mod t^{c(\Delta)}$ imposes no constraint on the parameters $\lambda, \nu$, because there are no gap elements in $\Gamma \setminus \Delta$ (or $\Gamma \setminus m(\Delta)$) above $\sigma$ or $\sigma'$ to support correction terms. Therefore, by Lemma~\ref{condiction is trivial}, these syzygy conditions are automatically satisfied and hence trivial.

This proves the claim.\\

We now distinguish between the following two cases:\\

\subsubsection*{\textsc{Case 1: $c(m(\Delta)) < c(\Delta)$}}
Let $T_\Delta = \{\gamma_1, \dots, \gamma_n\}$ be the minimal generating set of $\Delta$, ordered increasingly. For $s \in \Delta_{>0,<c(\Delta)} := (0, c(\Delta)) \cap \Delta$, fix a decomposition
\[
s = \gamma_{g(s)} + \rho(s), \quad \rho(s) \in \Gamma,
\]
where $g \colon \Delta_{>0,<c(\Delta)} \to \{1,\dots,n\}$ assigns to each $s$ the index of the generator $\gamma_{g(s)}$ from which it arises.

Define the deformed syzygy matrix entries for $\sigma_i \in T_{\operatorname{Syz}(\Delta)} \cap T_{\operatorname{Syz}(m(\Delta))}$ and $\gamma_j \in T_\Delta \cap T_{m(\Delta)}$ by:
\[
(\mathcal{S}_\nu)_i^j = u_i^j \phi_{\sigma_i - \gamma_j} + \sum_{\substack{s \in \Delta_{>\sigma_i,<c(\Delta)} \\ g(s) = j}} \nu_{is}^{s - \sigma_i} \phi_{s - \gamma_j}.
\]

Since $m(\Delta)_{>0,<c(m(\Delta))} \subset \Delta_{>0,<c(\Delta)}$, we can restrict the above construction to $m(\Delta)$. Define:
\[
(\mathcal{S}'_\nu)_i^j = u_i^j \phi_{\sigma_i - \gamma_j} + \sum_{\substack{t \in m(\Delta)_{>\sigma_i,<c(m(\Delta))} \\ g(t) = j}} \nu_{it}^{t - \sigma_i} \phi_{t - \gamma_j},
\]
using the same indexing and parameters where defined.

Because $\Delta \subset m(\Delta)$ and $m(\Delta)_{>0,<c(m(\Delta))} \subset \Delta_{>0,<c(\Delta)}$, we obtain natural closed embeddings:
\[
\operatorname{Gen}_{m(\Delta)} \hookrightarrow \operatorname{Gen}_\Delta, \quad \operatorname{Syz}_{m(\Delta)} \hookrightarrow \operatorname{Syz}_\Delta,
\]
and hence a closed embedding:
\[
C^{[m(\Delta)]} \hookrightarrow \operatorname{Gen}_{m(\Delta)} \times \operatorname{Syz}_{m(\Delta)} \hookrightarrow \operatorname{Gen}_\Delta \times \operatorname{Syz}_\Delta.
\]

This is summarized in the following   diagram:
\[
\begin{tikzcd}
C^{[m(\Delta)]} \arrow[d, hook]  & C^{[\Delta]} \arrow[d, hook] \\
\operatorname{Gen}_{m(\Delta)} \times \operatorname{Syz}_{m(\Delta)} \arrow[r, hook, "h"] & \operatorname{Gen}_\Delta \times \operatorname{Syz}_\Delta
\end{tikzcd}
\]

Then $C^{[m(\Delta)]}$ is isomorphic to the subvariety of $\operatorname{Gen}_\Delta \times \operatorname{Syz}_\Delta$ defined by the conditions:
\[
\left\{
\begin{aligned}
& (\mathcal{G}'_\lambda \circ \mathcal{S}'_\nu) \equiv 0 \mod t^{c(m(\Delta))}, \\
& \lambda_j^{k - \gamma_j} = 0, \quad \text{for } k \in (\Gamma_{>\gamma_j} \setminus \Delta) \setminus (\Gamma_{>\gamma_j} \setminus m(\Delta)),\ \gamma_j \in T_\Delta \cap T_{m(\Delta)}, \\
& \nu_{it}^{t - \sigma_i} = 0, \quad \text{for } t \in \Delta_{>\sigma_i,<c(\Delta)} \setminus m(\Delta)_{>\sigma_i,<c(m(\Delta))},\ \sigma_i \in T_{\operatorname{Syz}(\Delta)} \cap T_{\operatorname{Syz}(m(\Delta))}.
\end{aligned}
\right.
\]

Note that:
\[
(\Gamma_{>\gamma_j} \setminus \Delta) \setminus (\Gamma_{>\gamma_j} \setminus m(\Delta)) =
\begin{cases}
\{\gamma_\Delta\} & \text{if } \gamma_j < \gamma_\Delta, \\
\varnothing & \text{otherwise},
\end{cases}
\]
since $\gamma_\Delta \in m(\Delta) \setminus \Delta$. Similarly,
\[
\Delta_{>\sigma_i,<c(\Delta)} \setminus m(\Delta)_{>\sigma_i,<c(m(\Delta))} = \Delta_{\geq c(m(\Delta)), < c(\Delta)} = [c(m(\Delta)), c(\Delta)) \cap \Delta.
\]

Now consider the syzygy equations $(\mathcal{G}_\lambda \circ \mathcal{S}_\nu)_i \equiv 0 \mod t^{c(\Delta)}$ and $(\mathcal{G}'_\lambda \circ \mathcal{S}'_\nu)_i \equiv 0 \mod t^{c(m(\Delta))}$. Expand these in the basis $\{\phi_k\}$ and denote by $Eq_i^r$ (resp. $(Eq_i^r)'$) the coefficient of $\phi_{r + \sigma_i} = t^{r + \sigma_i}$, for $r \geq 0$. Note that $Eq_i^r = 0$ if $r + \sigma_i \notin \Gamma$ or $r + \sigma_i \geq c(\Delta)$.

The linear part of $Eq_i^r$ is given by:
\[
L_i^r := \delta_{\Delta \cap (\sigma_i, c(\Delta))}(r + \sigma_i) \nu_{i, r+\sigma_i}^{r} + \sum_{j=1}^n \delta_{\Gamma \setminus \Delta}(r + \gamma_j) u_i^j \lambda_j^r,
\]
where $\delta_S(x) = 1$ if $x \in S$, else $0$.

By the proof of \cite[Theorem~13]{oblomkov2018hilbert}, the linear forms $L_i^r$ (and similarly $(L_i^r)'$) are linearly independent. Since the higher-order terms in $Eq_i^r$ are in the ideal generated by the $\lambda, \nu$ variables, the zero locus of $Eq_i^r = 0$ is analytically isomorphic to the zero locus of $L_i^r = 0$. Hence, the strata are defined by their linear parts.

Let $\mathcal{I}$ and $\mathcal{I}'$ be the ideals defining $C^{[\Delta]}$ and $C^{[m(\Delta)]}$ in $\operatorname{Gen}_\Delta \times \operatorname{Syz}_\Delta$, respectively. The ambient space is
\[
\operatorname{Gen}_\Delta \times \operatorname{Syz}_\Delta = \operatorname{Spec} {\mathbb{C}}\left[\lambda_j^{k - \gamma_j}, \nu_{is}^{s - \sigma_i} \,\middle|\, k \in \Gamma_{>\gamma_j} \setminus \Delta,\ s \in \Delta_{>\sigma_i,<c(\Delta)},\ g(s)=j\right].
\]

Then:
\begin{equation}\label{ideal_of_C_delta}
\mathcal{I} = \left\langle L_i^r \mid \sigma_i \in T_{\operatorname{Syz}(\Delta)} \cap T_{\operatorname{Syz}(m(\Delta))},\ 1 \leq r \leq c(\Delta) - 1 - \sigma_i \right\rangle
\end{equation}
and 

\[\label{ideal_of_C_m_delta}
\mathcal{I}' = \left\langle  L_i^r \mid \sigma_i \in T_{\operatorname{Syz}(\Delta)} \cap T_{\operatorname{Syz}(m(\Delta))},\ 1 \leq r \leq c(m(\Delta)) - 1 - \sigma_i \right\rangle+ \left\langle \lambda_j^{\gamma_\Delta - \gamma_j},\ \nu_{it}^{t - \sigma_i} \right\rangle
\]
where:
 $\gamma_j < \gamma_\Delta$,
$t \in [c(m(\Delta)), c(\Delta)) \cap \Delta$,
and $\sigma_i < c(\Delta)$.

Note that for $t \in [c(m(\Delta)), c(\Delta))$, the variable $\nu_{it}^{t - \sigma_i}$ appears in $L_i^{t - \sigma_i}$, and since the $L_i^r$ are linearly independent, we may eliminate the $\nu_{it}^{t - \sigma_i}$ variables in $\mathcal{I}'$ using the equations $L_i^r$ for $r \geq c(m(\Delta)) - \sigma_i$. Thus, we can rewrite:

\begin{equation}\label{ideal_of_C_m_delta_simplified}
\mathcal{I}' \cong \left\langle L_i^r \mid \sigma_i \in T_{\operatorname{Syz}(\Delta)} \cap T_{\operatorname{Syz}(m(\Delta))},\ 1 \leq r \leq c(\Delta) - 2 - \sigma_i \right\rangle + \left\langle \lambda_j^{\gamma_\Delta - \gamma_j} \right\rangle,
\end{equation}
with the same conditions on $\gamma_j$ and $\sigma_i$.

Comparing \eqref{ideal_of_C_delta} and (\ref{ideal_of_C_m_delta_simplified}), we see that $C^{[\Delta]}$ is defined by extending the ideal of $C^{[m(\Delta)]}$ by the additional linear equations:
\[
L_i^{c(\Delta) - 1 - \sigma_i}, \quad \text{for } \sigma_i \in T_{\operatorname{Syz}(\Delta)} \cap T_{\operatorname{Syz}(m(\Delta))},\ \sigma_i < c(\Delta).
\]

Therefore, the canonical morphism
\[
\pi_\Delta \colon C^{[\Delta]} \to C^{[m(\Delta)]}
\]
is a  trivial fibration over its image; the closed subvariety of $C^{[m(\Delta)]}$ defined by the vanishing of the top-degree linear forms $L_i^{c(\Delta)-1-\sigma_i}$. The fiber over each point is isomorphic to the affine space
\[
\mathbb{A}^{B(\Delta)}, \quad \text{where } B(\Delta) = \# \{ \gamma_j \in T_\Delta \mid \gamma_j < \gamma_\Delta \},
\]
corresponding to the free parameters $\lambda_j^{\gamma_\Delta - \gamma_j}$ for $\gamma_j < \gamma_\Delta$.

\subsubsection*{\textsc{Case 2:}} Suppose $\gamma_{\Delta} < c(\Delta) - 1$, so that $c(\Delta) = c(m(\Delta))$. 
Assume that $m(\Delta)$ is minimally generated as a $\Gamma$-semimodule by $\gamma'_1, \dots, \gamma'_{n'}$.

On one hand, by Remark~\ref{generators-not-in-intersection}, we have
\[
\operatorname{Gen}_{m(\Delta)} = \operatorname{Spec}[\lambda_j^{k - \gamma_j}],
\]
where $\gamma_j \in T_{\Delta} \cap T_{m(\Delta)}$. 
It follows that
\[
\operatorname{Gen}_{\Delta} = \operatorname{Gen}_{m(\Delta)} \times \operatorname{Spec} \mathbb{C}[\lambda_j^{\gamma_{\Delta} - \gamma_j}],
\]
where $\gamma_j \in T_{\Delta} \cap T_{m(\Delta)}$ and $\gamma_j < \gamma_{\Delta}$.

On the other hand, for $s \in \Delta_{>0,<c(\Delta)}$, fix a decomposition $s = \gamma_{g(s)} + \rho(s)$ with $\rho(s) \in \Gamma$, where 
$g \colon \Delta_{>0,<c(\Delta)} \to \{1, \dots, n\}$. Define
\[
(\mathcal{S}_{\nu})_i^j = u_i^j \phi_{\sigma_i - \gamma_j} + 
\sum_{\substack{s \in \Delta_{>\sigma_i,<c(\Delta)} \\ g(s) = j}} 
\nu_{i}^{s - \sigma_i} \phi_{s - \gamma_j},
\]
where $\sigma_i \in T_{\mathrm{Syz}(\Delta)} \cap T_{\mathrm{Syz}(m(\Delta))}$ and $\gamma_j \in T_{\Delta}$.

Now consider those $s$ for which $\gamma_{g(s)} \in T_{\Delta} \setminus T_{m(\Delta)}$. 
By Remark~\ref{generators-not-in-intersection}, we have $\gamma_{g(s)} = \gamma_{\Delta} + x(s)$ for some $x(s) \in \{p, q\}$. 
Thus, we can fix a decomposition for $s \in m(\Delta)_{>0,<c(\Delta)}$ as follows:
\[
s = 
\begin{cases}
\gamma_{g(s)} + \rho(s), & \text{if } \gamma_{g(s)} \in T_{m(\Delta)} \cap T_{\Delta}; \\
\gamma_{\Delta} + x(s) + \rho(s), & \text{if } \gamma_{g(s)} \in T_{\Delta} \setminus T_{m(\Delta)}; \\
\gamma_{\Delta} + 0, & \text{if } s = \gamma_{\Delta}.
\end{cases}
\]
Hence, we may write $s = \gamma'_{g'(s)} + \rho'(s)$ with $\rho'(s) \in \Gamma$, where 
$g' \colon m(\Delta)_{>0,<c(\Delta)} \to \{1, \dots, n'\}$.

For $\sigma_i \in T_{\mathrm{Syz}(\Delta)} \cap T_{\mathrm{Syz}(m(\Delta))}$, we compute:
\[
(\mathcal{G}_{\lambda} \circ \mathcal{S}_{\nu})_i = 
\begin{cases}
(\mathcal{G}'_{\lambda} \circ \mathcal{S}'_{\nu})_i, & \sigma_i > \gamma_{\Delta}; \\
(\mathcal{G}'_{\lambda} \circ \mathcal{S}'_{\nu})_i - \nu_{i\gamma_{\Delta}}^{\gamma_{\Delta} - \sigma_i} t^{\gamma_{\Delta}}, & \sigma_i < \gamma_{\Delta}.
\end{cases}
\]

We conclude that the canonical morphism
\[
C^{[\Delta]} \to C^{[m(\Delta)]}
\]
admits a piecewise fibration structure with fiber $\mathbb{A}^{B(\Delta)}$.
\end{proof}

Let us illustrate Cases 1 and 2 from the above proof of the theorem with two corresponding examples.

\begin{example}
Consider the plane curve singularity $C = \{y^4 = x^7\} \subset \mathbb{C}^2$, whose semigroup is $\Gamma = \langle 4, 7 \rangle$. Let $\Delta = (8, 11)_{\Gamma} $ be a $\Gamma$-semimodule. Then we have:
\[
\gamma_{\Delta} = 21, \quad c(\Delta) = 22, \quad m(\Delta) = (8, 11, 21 )_{\Gamma} ,
\]
\[
\mathrm{Syz}(\Delta) = (15, 32)_{\Gamma} , \quad \mathrm{Syz}(m(\Delta)) = ( 15, 25, 28)_{\Gamma} .
\]

Note that 
${\mathbb{C}}[\Gamma]={\mathbb{C}}[t^{4},t^{7}]$. We chose a ${\mathbb{C}}$-basis of ${\mathbb{C}}[\Gamma]$:   
$\phi_{8}=t^{8}$, $\phi_{11}=t^{11}$.\\ 

For $C^{[\Delta]}$:  $\Gamma\setminus \Delta=\{0,4,7,14,21\}$. We have 
\[(\mathcal{G}_{\lambda})_{1}=t^{8}+\lambda_{1}^{6} t^{14}+\lambda_{1}^{13}t^{21},\quad (\mathcal{G}_{\lambda})_{2}=t^{11}+\lambda_{2}^{3} t^{14}+\lambda_{2}^{10}t^{21}.\]

There exists only one minimal generator of $Syz(\Delta)$ smaller than $c(\Delta)=22$. We say $\sigma_{1}=15=8+7=11+4$. For elements in $ \Delta_{>15,< c(\Delta)}=\{16,18,19,20\}$, fix a decomposition: $16=8+8$,
$18=11+7$, $19=8+11$, $20=8+12$. Then 
\[(\mathcal{S}_{\nu})_{1}^{1}=t^{7}+\nu_{1,16}^{1}t^{8}+\nu_{1,19}^{4}t^{11}+\nu_{1,20}^{5}t^{12},\quad (\mathcal{S}_{\nu})_{1}^{2}=-t^{4}+\nu_{1,18}^{3}t^{7}.\]

Therefore, $C^{[\Delta]}\subset\operatorname{Spec}{\mathbb{C}}[\lambda_{1}^{6},\lambda_{1}^{13},\lambda_{2}^{3},\lambda_{2}^{10},\nu_{1,16}^{1},\nu_{1,18}^{3},\nu_{1,19}^{4},\nu_{1,20}^{5}]$ is defined by:
\[\nu_{1,16}^{1}t^{16}+(\nu_{1,18}^{3}-\lambda_{2}^{3})t^{18}+\nu_{1,19}^{4}t^{19}+\nu_{1,20}^{5}t^{20}+(\lambda_{1}^{6}+\lambda_{2}^{3}\nu_{1,18}^{3})t^{21}=0.\]

Consequently, $C^{[\Delta]}\cong\operatorname{Spec}{\mathbb{C}}[\lambda_{1}^{6},\lambda_{1}^{13},\lambda_{2}^{3},\lambda_{2}^{10}]/\langle \lambda_{1}^{6}+(\lambda_{2}^{3})^{2}\rangle $.\\

For $C^{[m(\Delta)]}$:  $\Gamma\setminus m(\Delta)=\{0,4,7,14\}$, $c(m(\Delta))=18$. We have 

\[(\mathcal{G}^{\prime}_{\lambda})_{1}=t^{8}+\lambda_{1}^{6} t^{14},\quad (\mathcal{G}^{\prime}_{\lambda})_{2}=t^{11}+\lambda_{2}^{3} t^{14},\quad (\mathcal{G}^{\prime}_{\lambda})_{3}=t^{21}.\]

For $s\in \Delta_{> 15,< c(m(\Delta))}=\{16\}$, fix a decomposition $16=8+8$. Then
\[(\mathcal{S}^{\prime}_{\nu})_{1}^{1}=t^{7}+\nu_{1,16}^{1}t^{8},\quad (\mathcal{S}^{\prime}_{\nu})_{1}^{2}=-t^{4},\quad (\mathcal{S}^{\prime}_{\nu})_{1}^{3}=0.\]

Therefore,  
$C^{[m(\Delta)]}\subset\operatorname{Spec}{\mathbb{C}}[\lambda_{1}^{6},\lambda_{2}^{3},\nu_{1,16}^{1}]$ is defined by:
\[\nu_{1,16}^{1}t^{16}=0.\]

Consequently, $C^{[m(\Delta)]}=\operatorname{Spec}{\mathbb{C}}[\lambda_{1}^{6},\lambda_{2}^{3}]$.\\

The canonical map is
\[\begin{aligned}
\operatorname{Spec}{\mathbb{C}}[\lambda_{1}^{6},\lambda_{1}^{13},\lambda_{2}^{3},\lambda_{2}^{10}]/\langle \lambda_{1}^{6}+(\lambda_{2}^{3})^{2}\rangle &\rightarrow \operatorname{Spec}{\mathbb{C}}[\lambda_{1}^{6},\lambda_{2}^{3}]\\
(a_{1},b_{1},a_{2},b_{2})&\mapsto (a_{1},a_{2})
\end{aligned}\]

This map admits a piecewise fibration structure with fiber  $\mathbb{A}^{ \# \{\gamma_i \mid \gamma_i < \gamma_{\Delta}\}}$.
\end{example}

\begin{example}
Consider the plane curve singularity $C = \{y^4 = x^{13}\} \subset \mathbb{C}^2$, with associated semigroup $\Gamma = \langle 4, 13 \rangle$. Let $\Delta = ( 12, 21, 30, 39 )_{\Gamma} $ be a $\Gamma$-semimodule. Then we have:
\[
c(\Delta) = 36, \quad \gamma_{\Delta} = 26, \quad m(\Delta) =(12, 21, 26 )_{\Gamma} ,
\]
\[
\mathrm{Syz}(\Delta) = (25, 34, 43, 52 )_{\Gamma} , \quad \mathrm{Syz}(m(\Delta)) = (25, 34, 32 )_{\Gamma} .
\]  
For $C^{[\Delta]}$, note that $\Gamma\setminus \Delta = \{0,4,8,13,17,26\}$.  

\[\left\{\begin{aligned}
(\mathcal{G}_{\lambda})_{1} &= t^{12}+\lambda_{1}^{1}t^{13}+\lambda_{1}^{5}t^{17}+\lambda_{1}^{14}t^{26},\\
(\mathcal{G}_{\lambda})_{2}&= t^{21}+\lambda_{2}^{5}t^{26},\\
(\mathcal{G}_{\lambda})_{3}&= t^{30},\\
(\mathcal{G}_{\lambda})_{4}&= t^{39}.
\end{aligned}\right .\]

There are two minimal generator of $Syz(\Delta)$ smaller than $c(\Delta)=36$: $\sigma_{1} = 25 = 12+13 = 21+4$. $\sigma_{2} = 34 = 21+13 = 30+4$.\\

For elements in  $\Delta_{>25,<36} = \{28,29,30,32,33,34\}$, fix a decomposition:
$28 = 12+16$, $29 = 12+17$, $30 = 30+0$, $32 = 12+20$, $33  = 12+21$, $34 = 30+4$. 
Then we have 
\[\left\{\begin{aligned}(\mathcal{S}_{\nu})_{1}^{1}&=t^{13}+\nu_{1,28}^{3}t^{16}+\nu_{1,29}^{4}t^{17}+\nu_{1,32}^{7}t^{20}+\nu_{1,33}^{8}t^{21},\\ (\mathcal{S}_{\nu})_{1}^{2}&=-t^{4},\\
(\mathcal{S}_{\nu})_{1}^{3}&=\nu_{1,30}^{5}+\nu_{1,34}^{9}t^{4},\\
(\mathcal{S}_{\nu})_{1}^{4}&=0.\\
\end{aligned}\right.\]

Therefore, $C^{[\Delta]}\subset \operatorname{Spec}{\mathbb{C}}[\lambda_{1}^{1},\lambda_{1}^{5},\lambda_{1}^{14},\lambda_{2}^{5},\nu_{1,28}^{3},\nu_{1,29}^{4},\nu_{1,30}^{5},\nu_{1,32}^{7},\nu_{1,33}^{8},\nu_{1,34}^{9}]$ is defined by:
\[
\begin{aligned}
\sum_{j=1}^{4}(\mathcal{G}_{\lambda})_{j}\circ (\mathcal{S}_{\nu})_{1}^{j} &= \lambda_{1}^{1}t^{26}+\nu_{1,28}^{3}t^{28}+(\nu_{1,29}^{4}+\lambda_{1}^{1}\nu_{1,28}^{3})t^{29}+(\nu_{1,30}^{5}+\lambda_{1}^{5}-\lambda_{2}^{5}+\lambda_{1}^{1}\nu_{1,29}^{4})t^{30}\\
&+\nu_{1,32}^{7}t^{32}+(\nu_{1,33}^{8}+\lambda_{1}^{1}\nu_{1,32}^{7}+\lambda_{1}^{5}\nu_{1,28}^{3})t^{33}+(\nu_{1,34}^{9}+\lambda_{1}^{1}\nu_{1,33}^{8}+\lambda_{1}^{5}\nu_{1,29}^{4})t^{34}=0\end{aligned}\]

Consequently, 
\[\begin{aligned}
C^{[\Delta]}&\cong
\operatorname{Spec}\frac{{\mathbb{C}}[\lambda_{1}^{1},\lambda_{1}^{5},\lambda_{1}^{14},\lambda_{2}^{5},\nu_{1,28}^{3},\nu_{1,29}^{4},\nu_{1,30}^{5},\nu_{1,32}^{7},\nu_{1,33}^{8},\nu_{1,34}^{9}]}{\langle \lambda_{1}^{1},\nu_{1,28}^{3},\nu_{1,29}^{4},\nu_{1,30}^{5}+\lambda_{1}^{5}-\lambda_{2}^{5},\nu_{1,32}^{7},\nu_{1,33}^{8},\nu_{1,34}^{9}\rangle }\\
&\cong \operatorname{Spec}{\mathbb{C}}[\lambda_{1}^{1},\lambda_{1}^{5},\lambda_{1}^{14},\lambda_{2}^{5}]/\langle \lambda_{1}^{1}\rangle 
\end{aligned}\]

For $C^{[m(\Delta)]}$, note that $\Gamma\setminus \Delta = \{0,4,8,13,17\}$, $c(m(\Delta))=36$. 

\[\left\{\begin{aligned}
(\mathcal{G}^{\prime}_{\lambda})_{1} &= t^{12}+\lambda_{1}^{1}t^{13}+\lambda_{1}^{5}t^{17},\\
(\mathcal{G}^{\prime}_{\lambda})_{2}&= t^{21},\\
(\mathcal{G}^{\prime}_{\lambda})_{3}&= t^{26}.
\end{aligned}\right .\]

For elements in  $m(\Delta)_{>25,<36} = \{26,28,29,30,32,33,34\}$, fix a decomposition:
$26 = 26+0$, $28 = 12+16$, $29 = 12+17$, $30 = 26+4$, $32 = 12+20$, $33  = 12+21$, $34 = 26+4+4$.\\

Then we have 
\[\left\{\begin{aligned}(\mathcal{S}_{\nu})_{1}^{1}&=t^{13}+\nu_{1,28}^{3}t^{16}+\nu_{1,29}^{4}t^{17}+\nu_{1,32}^{7}t^{20}+\nu_{1,33}^{8}t^{21},\\ (\mathcal{S}_{\nu})_{1}^{2}&=-t^{4},\\
(\mathcal{S}_{\nu})_{1}^{3}&=\nu_{1,26}^{1}+\nu_{1,30}^{5}t^{4}+\nu_{1,34}^{9}t^{8}.
\end{aligned}\right.\]

Therefore,
\[
\begin{aligned}
\sum_{j=1}^{4}(\mathcal{G}_{\lambda})_{j}\circ (\mathcal{S}_{\nu})_{1}^{j} &= (\nu_{1,26}^{1}+\lambda_{1}^{1})t^{26}+\nu_{1,28}^{3}t^{28}+(\nu_{1,27}^{4}+\lambda_{1}^{1}\nu_{1,28}^{3})t^{29}+(\nu_{1,30}^{5}+\lambda_{1}^{5}-\lambda_{2}^{5}+\lambda_{1}^{1}\nu_{1,29}^{4})t^{30}\\
&+\nu_{1,32}^{7}t^{32}+(\nu_{1,33}^{8}+\lambda_{1}^{1}\nu_{1,32}^{7}+\lambda_{1}^{5}\nu_{1,28}^{3})t^{33}+(\nu_{1,34}^{9}+\lambda_{1}^{1}\nu_{1,33}^{8}+\lambda_{1}^{5}\nu_{1,29}^{4})t^{34}=0
\end{aligned}\]

Consequently, 
\[\begin{aligned}
C^{[m(\Delta)]}&\cong
\operatorname{Spec}\frac{{\mathbb{C}}[\lambda_{1}^{1},\lambda_{1}^{5},\nu_{1,26}^{1},\nu_{1,28}^{3},\nu_{1,29}^{4},\nu_{1,30}^{5},\nu_{1,32}^{7},\nu_{1,33}^{8},\nu_{1,34}^{9}]}{(\lambda_{1}^{1}+\nu_{1,26}^{1},\nu_{1,28}^{3},\nu_{1,29}^{4},\nu_{1,30}^{5}+\lambda_{1}^{5}-\lambda_{2}^{5},\nu_{1,32}^{7},\nu_{1,33}^{8},\nu_{1,34}^{9})}\\
&\cong \operatorname{Spec}{\mathbb{C}}[\lambda_{1}^{1},\lambda_{1}^{5}]
\end{aligned}\]

The canonical map is 
\[\begin{aligned}
\operatorname{Spec}{\mathbb{C}}[\lambda_{1}^{1},\lambda_{1}^{5},\lambda_{1}^{14},\lambda_{2}^{5}]/(\lambda_{1}^{1})&\rightarrow \operatorname{Spec}{\mathbb{C}}[\lambda_{1}^{1},\lambda_{1}^{5}]\\
(a_{1},b_{1},a_{2},b_{2})&\mapsto (a_{1},b_{1})
\end{aligned}\]
This map admits  a piecewise fibration structure with fiber  $\mathbb{A}^{B(\Delta)}$.
\end{example}

\begin{remark}\label{Property of N_Delta=0}
Consider a plane curve singularity $ C $ defined by $ x^p = y^q $, with associated numerical semigroup $ \Gamma = \langle p, q \rangle $. Let $ \Delta = (\gamma_1, \dots, \gamma_n )_{\Gamma} $ be a $\Gamma$-semimodule. If $ C^{[\Delta]} =\{ \mathrm{pt}\} $, then  $ C^{[\Delta]} $ consists of a single point:
\[
C^{[\Delta]} = \big\{ \langle t^{\gamma_1}, \dots, t^{\gamma_n} \rangle \big\},
\]
and for each $ i = 1, \dots, n $, we have
\[
\Gamma_{>\gamma_i} \setminus \Delta = \varnothing.
\]
That is, $ \Delta $ contains all elements of $ \Gamma $ strictly greater than each generator $ \gamma_i $.
\end{remark}

As an application of Theorem \ref{piecewise fibration}, we  recover \parencite[Theorem 13]{oblomkov2018hilbert}:

\begin{theorem}\label{dim of Hilbert scheme}
Let $ C $ be a plane curve singularity defined by $ x^p = y^q $, and let $ \Gamma = \langle p, q \rangle $ be the associated numerical semigroup. For a $\Gamma$-semimodule $ \Delta =(\gamma_1, \dots, \gamma_n )_{\Gamma}  $, there exists an isomorphism
\[
C^{[\Delta]} \cong \mathbb{A}^{N(\Delta)},
\]
where
\begin{equation}\label{eq:NDelta}
N(\Delta) = \sum_{i} \# ( \Gamma_{>\gamma_i} \setminus \Delta ) - \sum_{i} \# (\Gamma_{>\sigma_i} \setminus \Delta ).
\end{equation}
Here $ \sigma_i $ runs over the minimal generators of the syzygy semimodule $ \mathrm{Syz}(\Delta) $.

\begin{proof}

Assume that $ C^{[m^s(\Delta)]} = \{\mathrm{pt}\} $ for some $ s $. We proceed by induction on $ s $.

The base case $ s = 0 $ follows directly from Remark~\ref{Property of N_Delta=0}.

Now suppose the statement holds for $ s = k $, i.e., $ C^{[m^k(\Delta)]} =\{ \mathrm{pt}\} $. We aim to prove it for $ s = k+1 $, that is, $ C^{[m^{k+1}(\Delta)]} = \{\mathrm{pt}\} $.

Since $ m^{k+1}(\Delta) = m(m^k(\Delta)) $, and by the inductive hypothesis $ C^{[m^k(\Delta)]} =\{ \mathrm{pt}\} $, it suffices to consider the case where $ C^{[m(\Delta)]} =\{ \mathrm{pt}\} $ and analyze the structure of $ C^{[\Delta]} $.

By the inductive assumption, $ C^{[m(\Delta)]} =\{ \mathrm{pt}\} $, which implies $ N(m(\Delta)) = 0 $. Recall from equation~\eqref{eq:NDelta} that
\[
N(m(\Delta)) = \sum_{\gamma'_i \in T_{m(\Delta)}} \# (\Gamma_{>\gamma'_i} \setminus m(\Delta))- \sum_{\sigma'_i \in T_{\mathrm{Syz}(m(\Delta))}} \# (\Gamma_{>\sigma'_i} \setminus m(\Delta)).
\]

Now, observe that for any $ x \in \Gamma $, we have
\[
(\Gamma_{>x} \setminus \Delta) \setminus (\Gamma_{>x} \setminus m(\Delta)) =
\begin{cases}
\{ \gamma_\Delta \}, & \text{if } x < \gamma_\Delta, \\
\varnothing, & \text{otherwise}.
\end{cases}
\]
This is because $ m(\Delta) = \Delta \cup \{\gamma_\Delta\} $ when $ \gamma_\Delta \notin \Delta $, and $ \gamma_\Delta $ is the only element added in the closure process.

Using this, we compute the first sum in $ N(\Delta) $:
\begin{align*}
\sum_{\gamma_i \in T_{\Delta}} \# (\Gamma_{>\gamma_i} \setminus \Delta )
&= \sum_{\gamma_i \in T_{\Delta} \cap T_{m(\Delta)}} \# ( \Gamma_{>\gamma_i} \setminus \Delta ) \\
&= \sum_{\gamma_i \in T_{\Delta} \cap T_{m(\Delta)}} \#  (\Gamma_{>\gamma_i} \setminus m(\Delta))  + \# \{\gamma_i \in T_{m(\Delta)} \cap T_{\Delta} \mid  \gamma_i < \gamma_{\Delta}\} \\
&= \sum_{\gamma'_i \in T_{m(\Delta)}} \# (\Gamma_{>\gamma'_i} \setminus m(\Delta)) + \# \{ \gamma'_i \in T_{m(\Delta)} \cap T_{\Delta} \mid \gamma'_i < \gamma_\Delta \},
\end{align*}
where $ \delta_{\gamma_i < \gamma_\Delta} = 1 $ if $ \gamma_i < \gamma_\Delta $, and 0 otherwise.

Similarly, for the syzygy terms:
\begin{align*}
\sum_{\sigma_i \in T_{\mathrm{Syz}(\Delta)}} \# (\Gamma_{>\sigma_i} \setminus \Delta )
&= \sum_{\sigma'_i \in T_{\mathrm{Syz}(m(\Delta))} \cap T_{\mathrm{Syz}(\Delta)}} \# ( \Gamma_{>\sigma'_i} \setminus \Delta ) \\
&= \sum_{\sigma'_i \in T_{\mathrm{Syz}(m(\Delta))} \cap T_{\mathrm{Syz}(\Delta)}} \# (\Gamma_{>\sigma'_i} \setminus m(\Delta)) + \# \{ \sigma'_i \in T_{\mathrm{Syz}(m(\Delta))} \cap T_{\mathrm{Syz}(\Delta)} \mid \sigma'_i < \gamma_\Delta \} \\
&= \sum_{\sigma'_i \in T_{\mathrm{Syz}(m(\Delta))}} \# (\Gamma_{>\sigma'_i} \setminus m(\Delta)) + \# \{ \sigma'_i \in T_{\mathrm{Syz}(m(\Delta))} \cap T_{\mathrm{Syz}(\Delta)} \mid \sigma'_i < \gamma_\Delta \}.
\end{align*}

Therefore, by subtracting the two sums and applying Theorem~\ref{piecewise fibration}, we obtain:
\[
N(\Delta) = N(m(\Delta)) + \# \{ \gamma_i \in T_{m(\Delta)} \cap T_{\Delta} \mid \gamma_i < \gamma_\Delta \} - \# \{ \sigma_i \in T_{\mathrm{Syz}(m(\Delta))} \cap T_{\mathrm{Syz}(\Delta)} \mid \sigma_i < \gamma_\Delta \}.
\]

Since $ N(m(\Delta)) = 0 $ by the inductive hypothesis, and the correction terms count the number of generators and syzygies below $ \gamma_\Delta $, this expresses $ N(\Delta) $ as a non-negative integer, consistent with $ C^{[\Delta]} $ being an affine space of dimension $ N(\Delta) $.
\end{proof}
\end{theorem}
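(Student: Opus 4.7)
The plan is to induct on the level of $\Delta$ in the tree $G_\Gamma$, using the parent map $m$ to descend toward the root $\Gamma\setminus\{0\}$, at which the unique ideal is the maximal ideal, so $C^{[\Gamma\setminus\{0\}]}$ is a single point. The base case is Remark \ref{Property of N_Delta=0}: when $C^{[\Delta]}$ is a point, each $\Gamma_{>\gamma_i}\setminus\Delta$ is empty, hence each $\Gamma_{>\sigma_i}\setminus\Delta$ is empty as well (since $\sigma_i$ exceeds some $\gamma_i$), and therefore $N(\Delta)=0$, consistent with $\dim\{\mathrm{pt}\}=0$. Since iterating $m$ strictly reduces the codimension $\#(\Gamma\setminus\Delta)$, the induction must terminate.

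For the inductive step, I would assume $C^{[m(\Delta)]}\cong\mathbb{A}^{N(m(\Delta))}$ and invoke Theorem \ref{piecewise fibration} for the edge $\Delta\to m(\Delta)$. In the $(p,q)$ case the linear forms $L_i^r$ carving out the strata are linearly independent (the remark following Proposition \ref{prop:OS12}), so the image of $\pi_\Delta$ inside $\mathbb{A}^{N(m(\Delta))}$ is a linear subspace whose codimension equals $\#\{\sigma_i\in T_{\mathrm{Syz}(\Delta)}\cap T_{\mathrm{Syz}(m(\Delta))}\mid \sigma_i<\gamma_\Delta\}$. The fiber is the affine space $\mathbb{A}^{B(\Delta)}$, so the piecewise-trivial fibration is in fact a globally trivial affine-space bundle, and $C^{[\Delta]}\cong\mathbb{A}^{N(\Delta)}$ with the recursion
\[
N(\Delta) \;=\; N(m(\Delta)) + B(\Delta) - \#\bigl\{\sigma_i\in T_{\mathrm{Syz}(\Delta)}\cap T_{\mathrm{Syz}(m(\Delta))}\mid \sigma_i<\gamma_\Delta\bigr\}.
\]

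What remains, and what I expect to be the main obstacle, is a bookkeeping check that this recursion is consistent with the closed formula for $N(\Delta)$ stated in the theorem, done by comparing each of the two sums in $N(\Delta)$ and $N(m(\Delta))$ term by term. For the generator sum, Remark \ref{generators-not-in-intersection} gives $T_{m(\Delta)}\setminus T_\Delta=\{\gamma_\Delta\}$ and $T_\Delta\setminus T_{m(\Delta)}\subseteq\{\gamma_\Delta+p,\gamma_\Delta+q\}$; each such exceptional generator has empty higher-gap set and therefore contributes $0$, while for every common generator $\gamma_i\in T_\Delta\cap T_{m(\Delta)}$ the set $\Gamma_{>\gamma_i}\setminus\Delta$ exceeds $\Gamma_{>\gamma_i}\setminus m(\Delta)$ by $\{\gamma_\Delta\}$ exactly when $\gamma_i<\gamma_\Delta$, producing the $+B(\Delta)$ term. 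For the syzygy sum I plan to use Proposition \ref{gamma = p,q, equal ASyz TSyz}, which in the $(p,q)$ setting identifies $T_{\mathrm{Syz}(\Delta)}$ with consecutive pairs of generators in the canonical cyclic order $\gamma_1\prec\cdots\prec\gamma_n$; the update $\Delta\mapsto m(\Delta)$ alters this cyclic order only in the (at most three) slots adjacent to $\gamma_\Delta$, so a finite case analysis on which of $\gamma_\Delta+p,\gamma_\Delta+q$ actually lie in $T_\Delta$ shows that every syzygy in the symmetric difference $T_{\mathrm{Syz}(\Delta)}\triangle T_{\mathrm{Syz}(m(\Delta))}$ has empty higher-gap set, while the common syzygies below $\gamma_\Delta$ gain the single element $\gamma_\Delta$. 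This case analysis, balancing the local reorganization of the minimal generating set against the local reorganization of minimal syzygies, is the delicate step where the bulk of the technical work will lie.
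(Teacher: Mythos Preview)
Your proposal is correct and follows essentially the same route as the paper: induct along the tree via the map $m$, invoke Theorem~\ref{piecewise fibration} to obtain the recursion $N(\Delta)=N(m(\Delta))+B(\Delta)-\#\{\sigma_i\in T_{\mathrm{Syz}(\Delta)}\cap T_{\mathrm{Syz}(m(\Delta))}\mid\sigma_i<\gamma_\Delta\}$, and then verify that the closed formula~\eqref{eq:NDelta} satisfies this recursion by comparing the generator and syzygy sums for $\Delta$ and $m(\Delta)$. Your anticipated case analysis on the syzygy side is in fact already subsumed by the claim established at the beginning of the proof of Theorem~\ref{piecewise fibration} (every $\sigma$ in the symmetric difference $T_{\mathrm{Syz}(\Delta)}\triangle T_{\mathrm{Syz}(m(\Delta))}$ satisfies $\sigma>\gamma_\Delta$, hence has empty higher-gap set), so the bookkeeping is lighter than you expect.
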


\begin{corollary}\label{cor:monomial_case_grothendieck}
Let $C$ be a unibranch plane curve singularity with monomial valuation semigroup $\Gamma$, and let $\Delta =( \gamma_1, \dots, \gamma_n )_{\Gamma}$ be a $\Gamma$-semimodule. Then the canonical morphism
\[
\pi_\Delta:C^{[\Delta]} \to C^{[m(\Delta)]}
\]
is a trivial fibration, with fiber isomorphic to the affine space $\mathbb{A}^{B(\Delta)}$, where
\[
B(\Delta) = \# \{ \gamma_i \in T_\Delta \mid \gamma_i < \gamma_\Delta \}.
\]
\end{corollary}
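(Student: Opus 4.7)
The plan is to run the same analysis as in the proof of Theorem \ref{piecewise fibration}, observing that nearly every ingredient carries over to any unibranch singularity whose local ring admits a monomial basis $\phi_i = t^i$ of $A$ — a property that holds for monomial semigroups in the sense of Section \ref{Appendix}. With this basis fixed, Proposition \ref{prop:OS12} applies verbatim and presents $C^{[\Delta]}$ as the closed subscheme of $\operatorname{Gen}_\Delta \times \operatorname{Syz}_\Delta$ cut out by the equations $\mathcal{G}_\lambda \circ \mathcal{S}_\nu \equiv 0 \bmod t^{c(\Delta)}$, and similarly for $C^{[m(\Delta)]}$.

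First, I would check that Remark \ref{generators-not-in-intersection} and Lemma \ref{condiction is trivial} generalize to arbitrary monomial $\Gamma = \langle \alpha_1,\dots,\alpha_e\rangle$. The key input is that $\gamma_\Delta = \max(\Gamma \setminus \Delta)$ forces $\Gamma_{>\gamma_\Delta} \setminus \Delta = \varnothing$ and that any $\gamma \in T_\Delta \setminus T_{m(\Delta)}$ has the form $\gamma = \gamma_\Delta + \alpha_k$ for some minimal generator $\alpha_k$ of $\Gamma$, hence satisfies $\Gamma_{>\gamma} \setminus \Delta = \varnothing$. Neither statement depends on $\Gamma$ being two-generated. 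Consequently, only the syzygy generators in $T_{\operatorname{Syz}(\Delta)} \cap T_{\operatorname{Syz}(m(\Delta))}$ contribute nontrivial conditions to the defining equations of the two strata, exactly as in the $(p,q)$ argument.

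Next, the split by whether $c(m(\Delta)) < c(\Delta)$ or $c(m(\Delta)) = c(\Delta)$ applies without modification. In each case one identifies $\operatorname{Gen}_\Delta$ with $\operatorname{Gen}_{m(\Delta)} \times \operatorname{Spec} \mathbb{C}[\lambda_j^{\gamma_\Delta - \gamma_j} : \gamma_j \in T_\Delta \cap T_{m(\Delta)},\ \gamma_j < \gamma_\Delta]$, and checks that the extra $\nu$-coordinates indexed by $[c(m(\Delta)), c(\Delta)) \cap \Delta$ can be eliminated using the linear parts of the top-degree syzygy equations. The residual free $\lambda$-parameters — precisely $B(\Delta)$ of them — then furnish affine coordinates on the fiber, and they decouple from the coordinates describing $\pi_\Delta(C^{[\Delta]}) \subseteq C^{[m(\Delta)]}$, yielding the triviality of the bundle structure over the image.

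The main obstacle is, as in Theorem \ref{piecewise fibration}, the linear independence of the leading forms $L_i^r$ attached to the syzygy equations; once this holds, the comparison between the ideals \eqref{ideal_of_C_delta} and \eqref{ideal_of_C_m_delta_simplified} and the elimination of $\nu$-variables go through unchanged. For $\Gamma = \langle p,q\rangle$ this independence rested on Proposition \ref{gamma = p,q, equal ASyz TSyz}, which need not hold for arbitrary monomial $\Gamma$. The workaround is to argue directly from Proposition \ref{prop:OS12}: for a monomial semigroup, the rigidity of the equisingularity class ensures that the coefficient matrix in the presentation \eqref{eq:syzygy-presentation} is combinatorially determined by $\Gamma$ and $\Delta$ alone, so that each minimal syzygy generator $\sigma_i$ introduces a distinct new $\nu$-variable in $L_i^r$ that does not appear in any earlier equation. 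Granting this, the same chain of identifications exhibits $\pi_\Delta$ as a trivial $\mathbb{A}^{B(\Delta)}$-bundle over its image, completing the reduction.
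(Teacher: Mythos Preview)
Your approach is significantly more complicated than the paper's, and contains a genuine gap in the step you yourself flag as the ``main obstacle.''

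The paper's proof is essentially one line. It invokes Corollary~\ref{syz large that c in monomial semigroup}, which says that for a monomial semigroup $\Gamma$, \emph{every} syzygy generator $\sigma \in T_{\operatorname{Syz}(\Delta)}$ satisfies $\sigma \geq c(\Delta)$. Hence $\Gamma_{>\sigma} \setminus \Delta = \varnothing$ for all $\sigma$, and by Lemma~\ref{condiction is trivial} none of the syzygy equations $(\mathcal{G}_\lambda \circ \mathcal{S}_\nu)_i \equiv 0 \bmod t^{c(\Delta)}$ impose any condition whatsoever. Thus $C^{[\Delta]} = \operatorname{Gen}_\Delta$ is the full affine space, and the same holds for $C^{[m(\Delta)]}$. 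The map $\pi_\Delta$ is then just the coordinate projection killing the $B(\Delta)$ extra parameters $\lambda_j^{\gamma_\Delta - \gamma_j}$ for $\gamma_j < \gamma_\Delta$, which is manifestly a trivial $\mathbb{A}^{B(\Delta)}$-fibration (and in fact surjective).

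You instead try to replay the full proof of Theorem~\ref{piecewise fibration}: the case split on $c(m(\Delta))$, the comparison of ideals \eqref{ideal_of_C_delta} and \eqref{ideal_of_C_m_delta_simplified}, and the elimination of $\nu$-variables via linear independence of the $L_i^r$. None of this is needed here, and your proposed ``workaround'' for the linear independence --- appealing to rigidity of the equisingularity class to claim that each syzygy introduces a fresh $\nu$-variable --- is a heuristic, not an argument. You never verify it, and for general monomial $\Gamma$ the augmented syzygy structure can be more complicated than in the $\langle p,q\rangle$ case (cf.\ Proposition~\ref{gamma = p,q, equal ASyz TSyz}, which is specific to two generators). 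The paper avoids this issue entirely because in the monomial case there are simply no nontrivial equations to analyze.
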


\begin{proof}
Let $\Delta$ be a $\Gamma$-semimodule. By Corollary~\ref{syz large that c in monomial semigroup} in Section \ref{Appendix}, every syzygy $\sigma \in T_{\mathrm{Syz}(\Delta)}$ satisfies $\sigma \geq c(\Delta)$. Since $c(\Delta)$ is the conductor of $\Delta$, it follows that $\Gamma_{>\sigma} \setminus \Delta = \varnothing$ for all such $\sigma$.

Consequently, the syzygy correction terms in the dimension formula vanish, and the morphism $C^{[\Delta]} \to C^{[m(\Delta)]}$ satisfies the conditions of Lemma~\ref{condiction is trivial} (which asserts that such a morphism is a trivial fibration when syzygies lie above the conductor). Therefore, the morphism is a trivial fibration with fiber $\mathbb{A}^{B(\Delta)}$, where $B(\Delta)$ counts the number of minimal generators of $\Delta$ strictly less than $\gamma_\Delta$.
\end{proof}

The following example illustrates that, in general, the canonical morphism 
\[
C^{[\Delta]} \to C^{[m(\Delta)]}
\]
is still a fibration. However, it does not necessarily satisfy the structural properties concerning the image or the expected fiber dimension described in Theorem~\ref{piecewise fibration}. In particular, the map may fail to be surjective, and the fibers may not be affine spaces of dimension $B(\Delta)$, indicating that the piecewise trivial structure does not extend to arbitrary semimodules outside the monomial or unibranch setting.

\begin{example} \label{fibration for 6 9 19}
Consider a curve $C$  with local ring $\mathcal{O}_{C}=\mathbb{C}[[t^6,t^9,t^{19}]]$. Its valuation group is   $\Gamma = \langle 6,9,19 \rangle$ with conductor $c = 5\times 10=50.$  Let $\Delta = (15,18,28,31)_{\Gamma}$ be a subsemimodule of $\Gamma$ with syzygy 
$Syz(\Delta) = (24,27,34,37)_{\Gamma}.$    $\Gamma\setminus\Delta = \{0,6,9,12,19,25,38,44\}.$
\[
\begin{aligned}
(\mathcal{G}_{\lambda})_{1} &=t^{15}+\lambda_{{1}}^{4}t^{19}+\lambda_{{1}}^{10}t^{25}+\lambda_{1}^{23}t^{38}+\lambda_{1}^{29}t^{44}\\
(\mathcal{G}_{\lambda})_{2} &= t^{18}+\lambda_{{2}}^{1}t^{19}+\lambda_{{2}}^{7}t^{25}+\lambda_{2}^{20}t^{38}+\lambda_{2}^{26}t^{44}\\
(\mathcal{G}_{\lambda})_{3} &= t^{28}+\lambda_{3}^{10}t^{38}+\lambda_{3}^{16}t^{44}\\
(\mathcal{G}_{\lambda})_{4}&= t^{31}+\lambda_{4}^{7}t^{38}+\lambda_{4}^{13}t^{44}\\
\end{aligned}
\]
Then
\[C^{[\Delta]}\cong \operatorname{Spec}\mathbb{C}[\lambda_{1}^{4},\lambda_{2}^{1},\lambda_{1}^{10},\lambda_{2}^{7},\lambda_{1}^{23},\lambda_{2}^{20},\lambda_{3}^{10},\lambda_{4}^{7},\lambda_{1}^{29},\lambda_{2}^{26},\lambda_{3}^{16},\lambda_{4}^{13}]/\langle \lambda_{1}^{4},\lambda_{2}^{1},\lambda_{4}^{7},\lambda_{2}^{7},\lambda_{2}^{20}+(\lambda_{1}^{10})^{2},\lambda_{1}^{10}-\lambda_{3}^{10}\rangle \]
For $m(\Delta)=(15,18,28,31,44)_{\Gamma}$, we have 
\[C^{[m(\Delta)]}\cong \operatorname{Spec}\mathbb{C}[\lambda_{1}^{4},\lambda_{2}^{1},\lambda_{1}^{10},\lambda_{2}^{7},\lambda_{1}^{23},\lambda_{2}^{20},\lambda_{3}^{10},\lambda_{4}^{7}]/\langle \lambda_{1}^{4},\lambda_{2}^{1},\lambda_{4}^{7}\cdot \lambda_{2}^{7}\rangle\]
Consider the projection: 
\[\pi_{\Delta}: C^{[\Delta]}\rightarrow C^{[m(\Delta)]}\]
Then we have $\pi_{\Delta}^{-1}(\lambda_{4}^{7}\neq  0)=\varnothing$, $\pi_{\Delta}^{-1}(\lambda_{4}^{7}=  0)=\mathbb{A}^{6}$. \\

\noindent
For $m^2(\Delta)=(15,18,28,31,38)_{\Gamma}$, we have 
\[C^{[m^2(\Delta)]}\cong \operatorname{Spec}\mathbb{C}[\lambda_{1}^{4},\lambda_{2}^{1},\lambda_{1}^{10},\lambda_{2}^{7}]/\langle \lambda_{2}^{1}\rangle \]
Consider the projection: 
\[\pi_{m(\Delta)}: C^{[m(\Delta)]}\rightarrow C^{[m^2(\Delta)]}\]
Then we have $\pi_{m(\Delta)}^{-1}(\lambda_{2}^{7}\neq  0)=\mathbb{C^{*}}\times \mathbb{A}^{4}$, $\pi_{m(\Delta)}^{-1}(\lambda_{2}^{7}=  0)=\mathbb{A}^{5}$. 

\end{example}

\subsection{The motivic Hilbert zeta functions of some plane curve singularities}

As an application of the preceding results, we compute the motivic Hilbert zeta function for germs of irreducible plane curve singularities $(C,O)$. We focus primarily on curves defined by equations of the form $y^k = x^n$ with $\gcd(k,n) = 1$—the so-called $(k,n)$-curves—as well as on singularities whose value semigroups are monomial.

Particular attention is given to the simple elliptic and hyperelliptic singularities of types $E_6$, $E_8$, $W_8$, and $Z_{10}$. These examples illustrate how the combinatorics of semimodules and the structure of generalized Jacobians govern the geometry of the Hilbert schemes of points on such curves.

At the end of this section, we establish a general formula for the motivic zeta function in the case of $A_{2k}$ singularities (Theorem~\ref{main theorem}), showcasing the uniformity that arises in the presence of monomial semigroups.\\

We begin by recalling the definition of the Grothendieck ring of varieties and the motivic Hilbert zeta function.

he \emph{Grothendieck ring} of complex varieties, denoted $K_{0}(\mathrm{Var}_{\mathbb{C}})$, is the ring generated by isomorphism classes $[X]$ of complex algebraic varieties $X$, subject to the \emph{scissor relation} (also called the \emph{cut-and-paste relation}):
\[
[X] = [X \setminus Y] + [Y]
\]
for every closed subvariety $Y \subseteq X$. Multiplication is defined by $[X] \cdot [Y] = [X \times Y]$.

Let $C^{[\ell]}$ denote the $\ell$-th punctual Hilbert scheme of the plane curve singularity $(C,O)$, parameterizing ideals of colength $\ell$ supported at the origin. The \emph{motivic Hilbert zeta function} is defined as the generating series

\begin{equation}\label{motivic zeta function}
Z_{(C,O)}^{\mathrm{Hilb}}(q) := 1 + \sum_{\ell=1}^{\infty} [C^{[\ell]}] \, q^{\ell} \in K_0(\mathrm{Var}_{\mathbb{C}})[[q]].
\end{equation}

Let $\Gamma$ be the value semigroup of $C$, and let $c$ denote its conductor. As introduced in Section~\ref{tree}, let $\mathscr{D}_\ell$ denote the set of vertices at level $\ell$ in the semimodule tree $G_\Gamma$, for $1 \leq \ell \leq c$. We set $\mathscr{D}_0 = \{\Gamma\}$, corresponding to the trivial semimodule, and note that $C^{[0]} = C^{[\Gamma]} = \mathrm{pt}$, so $[C^{[0]}] = 1$.

By the stratification of the punctual Hilbert scheme induced by the semimodule filtration, we have the decomposition
\[
[C^{[\ell]}] = \sum_{\Delta \in \mathscr{D}_\ell} [C^{[\Delta]}] \quad \text{in } K_0(\mathrm{Var}_{\mathbb{C}}).
\]
Therefore, the motivic Hilbert zeta function can be rewritten as
\begin{equation}\label{motivic zeta function decom}
Z_{(C,O)}^{\mathrm{Hilb}}(q) = \sum_{\ell \geq 0} \sum_{\Delta \in \mathscr{D}_\ell} [C^{[\Delta]}] \, q^\ell.
\end{equation}

\subsubsection{Case of singularities of types $E_6$, $E_8$, $W_8$,  $Z_{10}$}

\begin{example}
For the $E_6$-type singularity, the local ring is given by
\[
\mathcal{O}_C = \mathbb{C}[[t^3, t^4]],
\]
whose value semigroup is $\Gamma = \langle 3, 4 \rangle$, with conductor $c = 6$. The punctual Hilbert schemes $C^{[\ell]}$ admit a stratification indexed by $\Gamma$-semimodules of level $\ell$, corresponding to the vertices $\mathscr{D}_\ell$ in the semimodule tree $G_\Gamma$.

The level sets $\mathscr{D}_\ell$ for $1 \leq \ell \leq 6$ are as follows:

\noindent
$\mathscr{D}_{1}=\{(3,4)_{\Gamma} \}$,\\
$\mathscr{D}_{2}=\{(4,6)_{\Gamma} ,(3,8)_{\Gamma} \}$,\\
$\mathscr{D}_{3}=\{(6,7,8)_{\Gamma} ,(4,9)_{\Gamma} ,(3)_{\Gamma} \}$,\\
$\mathscr{D}_{4}=\{(7,8,9)_{\Gamma} ,(6,8)_{\Gamma} ,(6,7)_{\Gamma} ,(4)_{\Gamma}  \}$,\\
$\mathscr{D}_{5}=\{(8,9,10)_{\Gamma} ,(7,9)_{\Gamma} ,(7,8)_{\Gamma} ,(8,9,10)_{\Gamma} ,(6,11)_{\Gamma} \}$, \\
$\mathscr{D}_{6}=\{(9,10,11)_{\Gamma} ,(8,10)_{\Gamma} ,(8,9)_{\Gamma} ,(7,12)_{\Gamma} ,(6)_{\Gamma} \}$. \\

Then we have:

\noindent
$[C^{[1]}] = 1$,\\
$[C^{[2]}] = 1 + \mathbb{L}$,\\
$[C^{[3]}] = 1 + \mathbb{L} + \mathbb{L}^2$,\\
$[C^{[4]}] = 1 + \mathbb{L} + 2\mathbb{L}^2$,\\
$[C^{[5]}] = 1 + \mathbb{L} + 2\mathbb{L}^2$,\\
$[C^{[6]}] = 1 + \mathbb{L} + 2\mathbb{L}^2 + \mathbb{L}^3$.

\end{example}

\begin{example}
For the $E_8$-type singularity, the local ring is given by
\[
\mathcal{O}_C = \mathbb{C}[[t^3, t^5]],
\]
with value semigroup $\Gamma = \langle 3, 5 \rangle$ and conductor $c = 8$. The vertex sets are:

\noindent
$\mathscr{D}_{1}=\{(3,5)_{\Gamma} \}$,\\
$\mathscr{D}_{2}=\{(5,6)_{\Gamma} ,(3,10)_{\Gamma} \}$,\\
$\mathscr{D}_{3}=\{(6,8,10)_{\Gamma} ,(5,9)_{\Gamma} ,(3)_{\Gamma} \}$,\\
$\mathscr{D}_{4}=\{(8,9,10)_{\Gamma} ,(6,10)_{\Gamma} ,(6,8)_{\Gamma} ,(5,12)_{\Gamma}  \}$,\\
$\mathscr{D}_{5}=\{(9,10,11)_{\Gamma} ,(8,10,12)_{\Gamma} , (8,9)_{\Gamma} ,(5)_{\Gamma} ,(6,13)_{\Gamma} \}$,\\
$\mathscr{D}_{6}=\{(9,11,13)_{\Gamma} ,(9,10)_{\Gamma} ,(10,11,12)_{\Gamma} ,(8,12)_{\Gamma} ,(8,10)_{\Gamma} ,(6)_{\Gamma} \}$, \\
$\mathscr{D}_{7}=\{(9,13)_{\Gamma} ,(9,11)_{\Gamma} , (11,12,13)_{\Gamma} ,(10,12,14)_{\Gamma} ,(10,11)_{\Gamma} ,(8,15)_{\Gamma} \}$,\\
$\mathscr{D}_{8}=\{(9,16)_{\Gamma} ,(12,13,14)_{\Gamma} ,(11,13,15)_{\Gamma} ,(11,12)_{\Gamma} ,(10,14)_{\Gamma} ,(10,12)_{\Gamma} ,(8)_{\Gamma} \}$. \\

Then we have:

\noindent
$[C^{[1]} ]=1$, \\
$[C^{[2]} ]=1+\mathbb{L}$,\\
$[C^{[3]} ]=1+\mathbb{L}+\mathbb{L}^{2}$,\\
$[C^{[4]} ]=1+\mathbb{L}+2\mathbb{L}^{2}$,\\
$[C^{[5]} ]=1+\mathbb{L}+2\mathbb{L}^{2}+\mathbb{L}^{3}$,\\
$[C^{[6]} ]=1+\mathbb{L}+2\mathbb{L}^{2}+2\mathbb{L}^{3}$,\\
$[C^{[7]} ]=1+\mathbb{L}+2\mathbb{L}^{2}+2\mathbb{L}^{3}$,\\
$[C^{[8]} ]=1+\mathbb{L}+2\mathbb{L}^{2}+2\mathbb{L}^{3}+\mathbb{L}^{{4}}$.\\
\end{example}

\begin{example}
For the $W_8$-type singularity, the local ring is given by
\[
\mathcal{O}_C = \mathbb{C}[[t^4, t^5, t^6]],
\]
with value semigroup $\Gamma = \langle 4, 5, 6 \rangle$ and conductor $c = 8$. The vertex sets are:

\noindent
$\mathscr{D}_{1}=\{(4,5,6)_{\Gamma} \}$,\\
$\mathscr{D}_{2}=\{(5,6,8)_{\Gamma} ,(4,6)_{\Gamma} ,(4,5)_{\Gamma} \}$,\\
$\mathscr{D}_{3}=\{(6,8,9)_{\Gamma} ,(5,8)_{\Gamma} ,(5,6)_{\Gamma} ,(4,11)_{\Gamma} \}$,\\
$\mathscr{D}_{4}=\{(8,9,10,11)_{\Gamma} ,(6,9)_{\Gamma} ,(6,8)_{\Gamma} ,(5,12)_{\Gamma} ,(4)_{\Gamma}  \}$,\\
$\mathscr{D}_{5}=\{(9,10,11,12)_{\Gamma} ,(8,10,11)_{\Gamma} , (8,9,11)_{\Gamma} ,(8,9,10)_{\Gamma} ,(6,13)_{\Gamma} ,(5)_{\Gamma} \}$,\\
$\mathscr{D}_{6}=\{(10,11,12,13)_{\Gamma} ,(9,11,12)_{\Gamma} ,(9,10,12)_{\Gamma} ,(9,10,11)_{\Gamma} ,(8,11)_{\Gamma} ,(8,10)_{\Gamma} ,(8,9)_{\Gamma} ,(6)_{\Gamma} \}$, \\
$\mathscr{D}_{7}=\{(11,12,13,14)_{\Gamma} ,(10,12,13)_{\Gamma} ,(10,11,13)_{\Gamma} ,(10,11,12)_{\Gamma} ,(9,12)_{\Gamma} ,(9,11)_{\Gamma} ,(9,10)_{\Gamma} ,(8,15)_{\Gamma} \}$,\\
$\mathscr{D}_{8}=\{(12,13,14,15)_{\Gamma} ,(11,13,14)_{\Gamma} ,(11,12,14)_{\Gamma} ,(11,12,13)_{\Gamma} ,(10,13)_{\Gamma} ,(10,12)_{\Gamma} ,(10,11)_{\Gamma} \\,(9,16)_{\Gamma} ,(8)_{\Gamma} \}$. \\

Then we have:

\noindent
$[C^{[1]}] = 1$,\\
$[C^{[2]}] = 1 + \mathbb{L} + \mathbb{L}^{2}$,\\
$[C^{[3]}] = 1 + \mathbb{L} + 2\mathbb{L}^{2}$,\\
$[C^{[4]}] = 1 + \mathbb{L} + 2\mathbb{L}^{2}+ \mathbb{L}^{3}$,\\
$[C^{[5]}] = 1 + \mathbb{L} + 2\mathbb{L}^{2}+ 2\mathbb{L}^{3}$,\\
$[C^{[6]}] = 1 + \mathbb{L} + 2\mathbb{L}^{2}+ 3\mathbb{L}^{3}+ \mathbb{L}^{4}$,\\
$[C^{[7]}] = 1 + \mathbb{L} + 2\mathbb{L}^{2}+ 3\mathbb{L}^{3}+ \mathbb{L}^{4}$,\\
$[C^{[8]}] = 1 + \mathbb{L} + 2\mathbb{L}^{2}+ 3\mathbb{L}^{3}+ 2\mathbb{L}^{4}$.\\
\end{example}

\begin{example}
For the $Z_{10}$-type singularity, the local ring is given by
\[
\mathcal{O}_C = \mathbb{C}[[t^4, t^6, t^7]],
\]
with value semigroup $\Gamma = \langle 4, 6, 7 \rangle$ and conductor $c = 10$. The vertex sets are:

\noindent
$\mathscr{D}_{1}=\{(4,6,7)_{\Gamma} \}$,\\
$\mathscr{D}_{2}=\{(6,7,8)_{\Gamma} ,(4,7)_{\Gamma} ,(4,6)_{\Gamma} \}$,\\
$\mathscr{D}_{3}=\{(7,8,10)_{\Gamma} ,(6,8,11)_{\Gamma} ,(6,7)_{\Gamma} ,(4,13)_{\Gamma} \}$,\\
$\mathscr{D}_{4}=\{(8,10,11,13)_{\Gamma} ,(7,10,12)_{\Gamma} ,(7,8)_{\Gamma} ,(6,11)_{\Gamma} ,(6,8)_{\Gamma} ,(4)_{\Gamma}  \}$,\\
$\mathscr{D}_{5}=\{(10,11,12,13)_{\Gamma} ,(8,11,13)_{\Gamma} , (8,10,13)_{\Gamma} ,(8,10,11)_{\Gamma} ,(7,12)_{\Gamma} ,(7,10)_{\Gamma} ,(6,15)_{\Gamma} \}$,\\
$\mathscr{D}_{6}=\{(11,12,13,14)_{\Gamma} ,(10,12,13,15)_{\Gamma} ,(10,11,13)_{\Gamma} ,(10,11,12)_{\Gamma} ,(8,13)_{\Gamma} ,(8,11)_{\Gamma} ,(8,10)_{\Gamma},\\ (7,16)_{\Gamma} ,(6)_{\Gamma} \}$, \\
$\mathscr{D}_{7}=\{(12,13,14,15)_{\Gamma} ,(11,13,14,16)_{\Gamma} ,(11,12,14)_{\Gamma} ,(11,12,13)_{\Gamma} ,(10,13,15)_{\Gamma} ,(10,12,13)_{\Gamma} ,\\(10,11)_{\Gamma} ,(8,17)_{\Gamma} ,(7)_{\Gamma} \}$,\\
$\mathscr{D}_{8}=\{(13,14,15,16)_{\Gamma} ,(12,14,15,17)_{\Gamma} ,(12,13,15)_{\Gamma} ,(12,13,14)_{\Gamma} ,(11,14,16)_{\Gamma} ,(11,13,16)_{\Gamma} ,\\(11,13,14)_{\Gamma} ,(11,12)_{\Gamma} ,(10,15)_{\Gamma} ,(10,13)_{\Gamma} ,(10,12)_{\Gamma} ,(8)_{\Gamma} \}$. \\
$\mathscr{D}_{9}=\{(14,15,16,17)_{\Gamma} ,(13,15,,16,18)_{\Gamma} ,(13,14,16)_{\Gamma} ,(13,14,15)_{\Gamma} ,(12,15,17)_{\Gamma} ,(12,14,17)_{\Gamma},\\
(12,14,15)_{\Gamma} ,(12,13)_{\Gamma} ,(11,16)_{\Gamma} ,(11,14)_{\Gamma} ,(11,13)_{\Gamma} ,(10,19)_{\Gamma} \}$. \\
$\mathscr{D}_{10}=\{(15,16,17,18)_{\Gamma} ,(14,16,17,19)_{\Gamma} ,(14,15,17)_{\Gamma} ,(14,15,16)_{\Gamma} ,(13,16,18)_{\Gamma} ,(13,15,18)_{\Gamma},\\(13,15,16)_{\Gamma} , (13,14)_{\Gamma} ,(12,17)_{\Gamma} ,(12,15)_{\Gamma} ,(12,14)_{\Gamma} ,(11,20)_{\Gamma} ,(10)_{\Gamma} \}$. \\

Then we have:   \\ 
\noindent
$[C^{[1]}] = 1$,\\
$[C^{[2]}] = 1 + \mathbb{L} + \mathbb{L}^{2}$,\\
$[C^{[3]}] = 1 + \mathbb{L} + 2\mathbb{L}^{2}$,\\
$[C^{[4]}] = 1 + \mathbb{L} + 2\mathbb{L}^{2}+ 2\mathbb{L}^{3}$,\\
$[C^{[5]}] = 1 + \mathbb{L} + 2\mathbb{L}^{2}+ 3\mathbb{L}^{3}$,\\
$[C^{[6]}] = 1 + \mathbb{L} + 2\mathbb{L}^{2}+ 3\mathbb{L}^{3}+ 2\mathbb{L}^{4}$,\\
$[C^{[7]}] = 1 + \mathbb{L} + 2\mathbb{L}^{2}+ 3\mathbb{L}^{3}+ 3\mathbb{L}^{4}$,\\
$[C^{[8]}] = 1 + \mathbb{L} + 2\mathbb{L}^{2}+ 3\mathbb{L}^{3}+ 4\mathbb{L}^{4}+\mathbb{L}^{5}$,\\
$[C^{[9]}] = 1 + \mathbb{L} + 2\mathbb{L}^{2}+ 3\mathbb{L}^{3}+ 4\mathbb{L}^{4}+\mathbb{L}^{5}$,\\
$[C^{[10]}] = 1 + \mathbb{L} + 2\mathbb{L}^{2}+ 3\mathbb{L}^{3}+ 4\mathbb{L}^{4}+2\mathbb{L}^{5}$.

\end{example}

The above examples lead to the following theorem: 
\begin{theorem}\label{thm:E6E8}
For the simple singularities $E_{6}$, $E_{8}$,$W_{8}$ and $Z_{10}$, the motivic Hilbert zeta function is given by: 
\[
Z_{(C_{E_{6}}, O)}^{{\mathrm{Hilb}}}(q)  = \frac{1+\mathbb{L}q^2+\mathbb{L}^2q^3+\mathbb{L}^2q^4+\mathbb{L}^3q^6}{1-q}
\]

\[
Z_{(C_{E_{8}}, O)}^{{\mathrm{Hilb}}}(q)  = \frac{1+\mathbb{L}q^2+\mathbb{L}^2q^3+\mathbb{L}^2q^4+\mathbb{L}^3q^5+\mathbb{L}^3q^6+\mathbb{L}^4q^8}{1-q}
\]

\[
Z_{(C_{W_{8}}, O)}^{{\mathrm{Hilb}}}(q)  = \frac{1+\mathbb{L}q^2+2\mathbb{L}^2q^3+\mathbb{L}^3q^4+\mathbb{L}^3q^5+(\mathbb{L}^3+\mathbb{L}^4)q^6+\mathbb{L}^4q^8}{1-q}
\]

\[
Z_{(C_{Z_{10}}, O)}^{{\mathrm{Hilb}}}(q)  = \frac{1+(\mathbb{L}+\mathbb{L}^2)q^2+\mathbb{L}^2q^3+2\mathbb{L}^3q^4+\mathbb{L}^3q^5+2\mathbb{L}^4q^6+(\mathbb{L}^4+\mathbb{L}^5)q^8+\mathbb{L}^5q^{10}}{1-q}
\]
\end{theorem}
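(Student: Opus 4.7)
The plan is to establish each of the four formulas by combining two ingredients: (i) the full tabulation of $[C^{[\ell]}]$ for $1 \le \ell \le c$ already assembled in the preceding examples, and (ii) a stabilization statement $[C^{[\ell]}] = [C^{[c]}]$ for every $\ell \ge c$. Granting both, the motivic Hilbert zeta function splits as
\[
Z^{\mathrm{Hilb}}_{(C,O)}(q) = \sum_{\ell=0}^{c-1}[C^{[\ell]}]\,q^\ell + [C^{[c]}]\,\frac{q^c}{1-q},
\]
so that $(1-q)\,Z^{\mathrm{Hilb}}_{(C,O)}(q)$ becomes the telescoping polynomial $\sum_{\ell=0}^{c}\bigl([C^{[\ell]}]-[C^{[\ell-1]}]\bigr)q^\ell$, with the convention $[C^{[-1]}]:=0$. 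Reading the successive differences off the tables in each example reproduces the numerators claimed in the statement case by case; this is a finite arithmetic verification.

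For ingredient (i), the stratum classes $[C^{[\Delta]}]$ have already been computed in the preceding four examples by repeatedly applying Theorem \ref{piecewise fibration} (for $E_6$ and $E_8$, with $\Gamma=\langle 3,4\rangle$ and $\langle 3,5\rangle$) and Corollary \ref{cor:monomial_case_grothendieck} (for $W_8$ and $Z_{10}$, with monomial semigroups $\Gamma=\langle 4,5,6\rangle$ and $\langle 4,6,7\rangle$ in the sense of Section \ref{Appendix}). Each edge $\Delta \to m(\Delta)$ of the tree $G_\Gamma$ contributes a trivial fibration with fiber $\mathbb{A}^{B(\Delta)}$; inducting from the root $\Gamma\setminus\{0\}$ along the tree yields $[C^{[\Delta]}]=\mathbb{L}^{N(\Delta)}$ as in Theorem \ref{dim of Hilbert scheme}, and then $[C^{[\ell]}] = \sum_{\Delta \in \mathscr{D}_\ell}\mathbb{L}^{N(\Delta)}$.

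For ingredient (ii), I would invoke Proposition \ref{prop:pfister-steenbrink}: for every $\ell \ge c$ the closed embedding $\phi_\ell : C^{[\ell]} \hookrightarrow \mathscr{M}$ is bijective onto the fixed reduced linear subvariety $\mathscr{M} \subset \operatorname{Gr}(\delta, \overline{A}/I(2\delta))$. Since $\mathscr{M}$ is reduced and we work in characteristic zero, a bijective closed embedding into it is an isomorphism of varieties, hence $[C^{[\ell]}] = [\mathscr{M}] = [C^{[c]}]$ in $K_0(\mathrm{Var}_{{\mathbb{C}}})$. An alternative, more combinatorial, argument uses the tree: for $\ell \ge c$ every $\Delta \in \mathscr{D}_\ell$ arises from a unique $\Delta' \in \mathscr{D}_{\ell-1}$ by the shift removing its smallest element, and under this identification the canonical morphism $\pi_\Delta$ becomes an isomorphism on each stratum, so the level-sums of $[C^{[\Delta]}]$ agree.

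The main obstacle is the bookkeeping at step (i): the tree $G_\Gamma$ grows rapidly (especially for $Z_{10}$, whose conductor is $10$ and whose level $\mathscr{D}_{10}$ already contains thirteen semimodules), and every edge must be traversed with the correct value of $B(\Delta)$ so that the final polynomial $\sum_\ell [C^{[\ell]}]q^\ell$ is exact. Stabilization, by contrast, is conceptually immediate from Pfister--Steenbrink. Once both ingredients are in place, the four rational expressions reduce to a finite arithmetic check, and one writes out each numerator as the sequence of successive differences of the tabulated classes.
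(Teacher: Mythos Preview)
Your proposal is correct and follows essentially the same approach as the paper: the theorem is stated immediately after the four worked examples, and the proof is nothing more than reading off the tabulated classes $[C^{[\ell]}]$ for $1\le \ell\le c$ from those examples (obtained via Theorem~\ref{dim of Hilbert scheme} for $E_6,E_8$ and Corollary~\ref{cor:monomial_case_grothendieck} for $W_8,Z_{10}$), combined with stabilization for $\ell\ge c$, then forming the telescoping differences. The only thing you make explicit that the paper leaves implicit is the stabilization step $[C^{[\ell]}]=[C^{[c]}]$ for $\ell\ge c$; your justification via Proposition~\ref{prop:pfister-steenbrink} (a surjective closed immersion into the reduced variety $\mathscr{M}$ is an isomorphism) is valid, and your alternative combinatorial argument via the shift $I\mapsto tI$ is the standard one.
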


We recall a result 
from the proof of Lemma 17 in \cite{oblomkov2012hilbert}. In the case $\Gamma = \langle k, n \rangle$ with $\gcd(k,n) = 1$, the minimal generators of a $\Gamma$-subsemimodule can be explicitly described in terms of the minimal generators of $\Gamma$ itself.

\begin{proposition}\cite{oblomkov2012hilbert}
There is a one-to-one correspondence between monomial ideals in $\mathbb{C}[[t^k, t^n]]$ and sequences 
\[
\phi = (\phi_0, \phi_1, \dots, \phi_{k-1})
\]
satisfying
\[
\phi_{k-1} \leq \phi_{k-2} \leq \cdots \leq \phi_0 \leq \phi_{k-1} + n,
\]
where $k < n$ and $\gcd(k,n) = 1$. Moreover, the number of minimal generators of the corresponding ideal is equal to the number of strict inequalities in this sequence.
\end{proposition}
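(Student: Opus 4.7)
The plan is to realize the bijection via residue classes modulo $k$. Since $\gcd(k,n)=1$, the elements $0, n, 2n, \dots, (k-1)n$ represent each residue class of $\mathbb{Z}/k\mathbb{Z}$ exactly once, and the elements of $\Gamma=\langle k,n\rangle$ in the class of $in\pmod k$ are precisely $\{in+jk : j \geq 0\}$. Given a nonzero monomial ideal, identified with a $\Gamma$-subsemimodule $\Delta \subseteq \Gamma$, I would set
\[
\phi_i := \min\{j \geq 0 : in + jk \in \Delta\}, \qquad i = 0, 1, \dots, k-1.
\]
Each $\phi_i$ is finite: starting from any element of $\Delta$, repeated addition of $n \in \Gamma$ cycles through all $k$ residue classes (as $\gcd(k,n)=1$), so $\Delta$ meets every class.

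Next I would derive the inequality chain from the condition $\Delta + \Gamma \subseteq \Delta$. Closure under $+k$ is automatic; closure under $+n$ applied to the minimal element $m_i := \phi_i k + in$ gives, for $0 \leq i \leq k-2$, the element $\phi_i k + (i+1)n \in \Delta$, forcing $\phi_{i+1} \leq \phi_i$; for $i = k-1$, it gives $(\phi_{k-1}+n)k \in \Delta$, hence $\phi_0 \leq \phi_{k-1}+n$. Conversely, given any sequence satisfying $\phi_{k-1} \leq \cdots \leq \phi_0 \leq \phi_{k-1}+n$, the union $\Delta := \bigcup_{i=0}^{k-1}\{in+jk : j \geq \phi_i\}$ is closed under addition by $k$ and $n$ (the wrap-around for $i=k-1$ is handled precisely by the last inequality), hence by all of $\Gamma$, giving the inverse map and establishing the bijection.

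For the count of minimal generators, I would observe that any element of $\Delta$ in class $in\pmod k$ distinct from $m_i$ has the form $m_i + jk$ with $j \geq 1$, so is not minimal; thus minimal generators lie in $\{m_0, \dots, m_{k-1}\}$. To decide when $m_i$ is minimal, one checks whether $m_i - k$ or $m_i - n$ lies in $\Delta$: the first is impossible (it would require $\phi_i - 1 \geq \phi_i$). For $i \geq 1$, $m_i - n = \phi_i k + (i-1)n$ lies in $\Delta$ iff $\phi_i \geq \phi_{i-1}$, which given $\phi_{i-1} \geq \phi_i$ means $\phi_{i-1} = \phi_i$; equivalently, $m_i$ is a minimal generator iff $\phi_{i-1} > \phi_i$ strictly. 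For $i = 0$, the normalization $-n \equiv (k-1)n \pmod k$ rewrites $m_0 - n = (\phi_0 - n)k + (k-1)n$, which lies in $\Delta$ iff $\phi_0 - n \geq \phi_{k-1}$; thus $m_0$ is minimal iff $\phi_0 < \phi_{k-1}+n$ strictly. The number of strict inequalities in the chain therefore matches the number of minimal generators.

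The main subtlety will be the wrap-around at $i=0$: one must carefully use $-n \equiv (k-1)n \pmod k$ to produce the extra inequality $\phi_0 \leq \phi_{k-1}+n$ and to handle the case $m_0 - n$. Once this bookkeeping is in place, the remaining verifications that $\Delta$ is a $\Gamma$-subsemimodule and that no other elements can be minimal generators are routine.
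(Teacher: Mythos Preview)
Your argument is correct. The paper itself does not prove this proposition: it is quoted as a result from \cite{oblomkov2012hilbert} (see the discussion preceding Proposition~\ref{n,k, vertex}) and used without further justification. Your residue-class construction is exactly the standard one, and it is the argument behind the cited result: partition $\Gamma=\langle k,n\rangle$ into the $k$ arithmetic progressions $\{in+jk:j\ge 0\}$, record the threshold $\phi_i$ in each, and translate closure under $+n$ into the chain of inequalities (with the wrap-around handled by $kn=(k-1)n+n$). Your treatment of the minimal-generator count is also correct; the one point worth making explicit is that checking $m_i-k\notin\Delta$ and $m_i-n\notin\Delta$ suffices to conclude $m_i$ is minimal, since any $\gamma\in\Gamma_{>0}$ contains at least one summand $k$ or $n$ and $\Delta+\Gamma\subseteq\Delta$ then forces $m_i-k\in\Delta$ or $m_i-n\in\Delta$.
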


\begin{proposition}\cite{oblomkov2012hilbert}\label{n,k, vertex}
Let $1 \leq \ell \leq 2\delta$. Then every $\Delta \in \mathscr{D}_\ell$ has the form
\[
\Delta = \left( \phi_{k-1}k + (k-1)n,\, \phi_{k-2}k + (k-2)n,\, \dots,\, \phi_1 k + n,\, \phi_0 \right)_{\Gamma} ,
\]
where the integers $\phi_0, \phi_1, \dots, \phi_{k-1}$ satisfy
\[
\sum_{j=0}^{k-1} \phi_j = \ell
\]
and
\[
\phi_{k-1} \leq \phi_{k-2} \leq \cdots \leq \phi_0 \leq \phi_{k-1} + n.
\]

\end{proposition}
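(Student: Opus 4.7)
The plan is to exploit the residue-class decomposition of $\Gamma = \langle k,n\rangle$ modulo $k$, which is very clean because $\gcd(k,n)=1$. The residues of $0, n, 2n, \dots, (k-1)n$ exhaust $\mathbb{Z}/k\mathbb{Z}$, and a short check shows that the elements of $\Gamma$ in the residue class $jn\pmod{k}$ form exactly the arithmetic progression $\{jn + ak : a\geq 0\}$. Taking this decomposition as the backbone, for each $\Delta\in\mathscr{D}_\ell$ I would define
\[
\phi_j := \min\{a\geq 0 : jn + ak \in \Delta\}, \qquad j=0,1,\dots,k-1,
\]
which is well-defined because the hypothesis $\#(\Gamma\setminus\Delta) = \ell < \infty$ forces $\Delta$ to meet every residue class.

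Next I would use the two generators of $\Gamma$ separately. Closure of $\Delta$ under addition by $k$ shows that the intersection of $\Delta$ with the $j$-th residue class is exactly $\{\phi_j k + jn + ak : a\geq 0\}$; this immediately gives
\[
\Delta \;=\; \bigl(\phi_0,\ \phi_1 k + n,\ \phi_2 k + 2n,\ \dots,\ \phi_{k-1}k+(k-1)n\bigr)_{\Gamma},
\]
because the right-hand side visibly contains these arithmetic progressions and cannot be larger (any $\gamma\in\Delta$ sits in some residue class $j$, hence in $\phi_j k + jn + k\mathbb{Z}_{\geq 0} \subset \phi_j k + jn + \Gamma$). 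Closure under addition by $n$ then forces the chain of inequalities: for $0\leq j\leq k-2$, the element $\phi_j k+(j+1)n$ lies in $\Delta$ and in the residue class $j+1$, so minimality of $\phi_{j+1}$ gives $\phi_{j+1}\leq \phi_j$; for $j=k-1$, adding $n$ yields $(\phi_{k-1}+n)k$, which is in the residue class $0$, whence $\phi_0 \leq \phi_{k-1}+n$.

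Finally, counting the gaps residue class by residue class gives exactly $\phi_j$ missing elements in class $j$, so $\sum_{j=0}^{k-1}\phi_j = \#(\Gamma\setminus\Delta) = \ell$, which completes the forward direction; the converse (that any tuple $(\phi_0,\dots,\phi_{k-1})$ satisfying the inequalities defines a $\Gamma$-subsemimodule of colength $\sum\phi_j$) is immediate from the same residue-class description. There is no serious obstacle in this argument; the only slightly delicate point is the wraparound inequality $\phi_0 \leq \phi_{k-1}+n$, which must be handled separately because adding $n$ to the last residue class shifts back to class $0$ while simultaneously raising the coefficient of $k$ by $n$. Once this is recorded, the remainder of the proof is bookkeeping on residue classes, and the formula of the proposition follows.
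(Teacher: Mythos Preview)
Your argument is correct, and there is nothing in the paper to compare it to: the proposition is stated purely as a citation from \cite{oblomkov2012hilbert} and carries no proof in the present paper. Your residue-class-modulo-$k$ decomposition is the standard and natural route, and each step (closure under $+k$ giving the arithmetic progressions, closure under $+n$ giving the chain of inequalities together with the wraparound $\phi_0 \leq \phi_{k-1}+n$, and the residue-class gap count giving $\sum_j \phi_j = \ell$) is valid.

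One minor slip worth fixing: with your definition $\phi_0 = \min\{a\geq 0 : ak\in\Delta\}$, the minimal element of $\Delta$ in the residue class $0$ is $\phi_0 k$, not $\phi_0$. So the generator list should read
\[
\Delta = \bigl(\phi_0 k,\ \phi_1 k + n,\ \dots,\ \phi_{k-1}k+(k-1)n\bigr)_{\Gamma}.
\]
This is the same typo already present in the paper's statement of the proposition; the specialization to $k=2$ in the corollary that follows (where the generator is written as $2i$, i.e.\ $k\phi_0$) confirms that $\phi_0 k$ is what is meant.
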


\subsubsection{{The case of the $A_{2d}$-type singularity}}

We now discuss the case of the $A_{2d}$-type singularity, defined by 
\[
\mathcal{O}_C = \mathbb{C}[[t^2, t^{2d+1}]],
\]
whose value semigroup is $\Gamma = \langle 2, n \rangle$ with $n = 2d+1$.

As a direct application of  Proposition \ref{n,k, vertex}, we obtain the following corollary:
\begin{corollary}\label{2,q, vertex}
Let $\mathcal{O}_C = \mathbb{C}[[t^2, t^{2d+1}]]$.  
Let $1 \leq \ell \leq 2\delta = 2d$. Then any $\Delta \in \mathscr{D}_{\ell}$ is of one of the following forms:

\begin{enumerate}
    \item[(i)] If $\ell$ is odd, then $\Delta = (2i, 2d+1 + 2(\ell - i))_{\Gamma} $, where $\frac{\ell}{2} < i \leq \ell$.
    
    \item[(ii)] If $\ell$ is even, then either:
    \begin{enumerate}
        \item $\Delta = (2i, 2d+1 + 2(\ell - i))_{\Gamma} $, where $\frac{\ell}{2} < i \leq \ell$, or
        \item $\Delta = (\ell)_{\Gamma} $.
    \end{enumerate}
\end{enumerate}
\end{corollary}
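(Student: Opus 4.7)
The plan is to derive Corollary \ref{2,q, vertex} as a direct specialization of Proposition \ref{n,k, vertex} to the case $k=2$, $n=2d+1$, followed by a short analysis of when the two candidate generators are both minimal.

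First I would apply Proposition \ref{n,k, vertex} with $k=2$ and $n=2d+1$. This writes every $\Delta \in \mathscr{D}_\ell$ in the form $\Delta = (2\phi_1 + (2d+1),\, 2\phi_0)_\Gamma$, subject to $\phi_0+\phi_1 = \ell$ and $\phi_1 \leq \phi_0 \leq \phi_1 + (2d+1)$. Setting $i := \phi_0$ (so $\phi_1 = \ell - i$), the inequality $\phi_1 \leq \phi_0$ becomes $i \geq \ell/2$, while $\phi_1 \geq 0$ forces $i \leq \ell$. The upper bound $\phi_0 \leq \phi_1 + (2d+1)$ rewrites as $2i - \ell \leq 2d+1$, which is automatically implied by $i \leq \ell \leq 2d$.

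Next I would test minimality of the two presented generators $2i$ and $2(\ell-i) + (2d+1)$. Since $\Gamma = \langle 2, 2d+1 \rangle$, their difference
\[
\bigl(2(\ell - i) + (2d+1)\bigr) - 2i = (2d+1) - 2(2i - \ell)
\]
is an odd integer with $0 \leq (2d+1) - 2(2i-\ell) \leq 2d+1$ (using $\ell/2 \leq i \leq \ell \leq 2d$), so it belongs to $\Gamma$ if and only if it equals $2d+1$, that is, if and only if $i = \ell/2$. Consequently, for $i > \ell/2$ both generators are genuinely minimal and $\Delta = (2i,\, 2d+1+2(\ell-i))_\Gamma$, producing cases (i) and (ii)(a). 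For $\ell$ even and $i = \ell/2$, the odd-residue generator equals $\ell + (2d+1) \in \ell + \Gamma$ and is therefore redundant, collapsing $\Delta$ to the single-generator semimodule $(\ell)_\Gamma$, which is case (ii)(b).

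Since $\ell$ odd excludes the integer value $i = \ell/2$, combining these sub-cases gives exactly the dichotomy stated in the corollary. The argument is essentially a substitution together with one parity observation, so there is no real obstacle; the only points requiring some care are verifying that the upper bound $\phi_0 \leq \phi_1 + (2d+1)$ is automatic under the hypothesis $\ell \leq 2d$, and correctly identifying $i = \ell/2$ as the unique value at which the odd-residue generator becomes redundant.
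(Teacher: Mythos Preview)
Your proposal is correct and follows precisely the approach the paper intends: the paper states the corollary ``as a direct application of Proposition~\ref{n,k, vertex}'' with no further proof, and your argument supplies exactly those details---specializing to $k=2$, $n=2d+1$, then checking via a parity observation when the odd-residue generator becomes redundant.
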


Recall that we denote an $\ell$-level root in the tree $G_{\Gamma}$ by $\Delta^{(\ell)}$.  If $\Delta\in \mathscr{D}_{\ell}$ is generated by only one element, then we define 
\[d_{\ell,2}(\Delta)=\varnothing.\] 

\begin{corollary}
Let $\mathcal{O}_C = \mathbb{C}[[t^2, t^{2d+1}]]$.

\begin{enumerate}
    \item[(i)] For $\ell\in  \mathbb{Z}_{\geq 1}$, every $\Delta \in \mathscr{D}_{\ell}$ is of one of the following forms:
    
    \begin{enumerate}
        \item[Case 1.] There exists an integers $\alpha$ such that $\Delta = (\alpha, \alpha+1 )_{\Gamma}  = [\alpha, \infty)$,
        
        \item[Case 2.] There exists an integers $\alpha, \beta$ such that  \[\Delta = ( \alpha, \beta)_{\Gamma} = \{\alpha, \alpha+2, \dots, \alpha+2c, \beta, \beta+1, \dots\},\]
         where $c \geq 0$ and $\beta = \alpha + 2c + 1$,
        
        \item[Case 3.] There exists an integers $\alpha$  such that 
        \[\Delta = ( \alpha )_{\Gamma} = \{\alpha, \alpha+2, \dots, \alpha+2c, \alpha+n, \alpha+n+1, \dots\}\]
        for some $n > 2c$, with $\alpha+n$ being the conductor.
    \end{enumerate}
    
    \item[(ii)] The $\ell$-level root $\Delta^{(\ell)}$ in the tree $G_{\Gamma}$ is of the form $\Delta = (\alpha, \alpha+1 )_{\Gamma}$.
\end{enumerate}
\end{corollary}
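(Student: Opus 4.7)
The plan is to treat the two parts separately: part (i) via a parity dichotomy forced by $2 \in \Gamma$, and part (ii) via induction on $\ell$ along the $d_1$-branch of the tree $G_\Gamma$ described in Proposition~\ref{property_of_tree}.

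For part (i), the starting observation is that $\Delta + 2 \subseteq \Delta$ forces $\Delta$ to decompose as the disjoint union of its even and odd parts, each being an arithmetic progression of step $2$ once its minimum is fixed. A pigeonhole argument on parity then shows $\Delta$ has at most two minimal generators: three minimal generators would include two of the same parity, whose positive even difference would lie in $\Gamma$, contradicting minimality. This leaves the single-generator case, which yields Case~3 directly since $\Delta = \alpha + \Gamma$ and the explicit form $\Gamma = \{0, 2, \ldots, n-1, n, n+1, \ldots\}$ forces $m = n$, together with the two-generator case having generators $\alpha < \beta$ of opposite parity. In the latter, $\beta - \alpha$ is an odd positive integer not in $\Gamma$, hence lies in $\{1, 3, \ldots, n-2\}$; writing $\beta = \alpha + 2c + 1$, the subcases $c = 0$ and $c \geq 1$ match Cases~1 and~2 respectively after unfolding $(\alpha + \Gamma) \cup (\beta + \Gamma)$ in terms of evens and odds.

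For part (ii), I will argue by induction on $\ell$ that $\Delta^{(\ell)}$ equals $\{x \in \Gamma : x \geq \alpha_\ell\}$ for an increasing sequence $\alpha_\ell \in \Gamma$ with $\alpha_1 = 2$. The inductive step uses that $d_1$ strips the smallest minimal generator of $\Delta^{(\ell-1)}$, which under the induction hypothesis is $\alpha_{\ell-1}$, giving $\alpha_\ell = \min\{x \in \Gamma : x > \alpha_{\ell-1}\}$. Once $\alpha_\ell \geq n - 1 = 2d$, the complementary gaps $\{1, 3, \ldots, 2d-1\}$ of $\Gamma$ lie entirely below $\alpha_\ell$, so $\{x \in \Gamma : x \geq \alpha_\ell\} = [\alpha_\ell, \infty)$, whose minimal generators are exactly $\alpha_\ell$ and $\alpha_\ell + 1$ (their difference being $1 \notin \Gamma$), producing the Case~1 form $(\alpha_\ell, \alpha_\ell + 1)_\Gamma$ required by part~(ii).

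The main obstacle is the transition regime for small $\ell$. When $\alpha_\ell < n - 1$, the subsemimodule $\{x \in \Gamma : x \geq \alpha_\ell\}$ still has two minimal generators, but they are $\alpha_\ell$ and $n$ rather than $\alpha_\ell$ and $\alpha_\ell + 1$, so the literal Case~1 description $(\alpha, \alpha+1)_\Gamma = [\alpha, \infty)$ is attained only once $\alpha_\ell \geq n - 1$. I plan to resolve this by phrasing the induction in the uniform form $\Delta^{(\ell)} = [\alpha_\ell, \infty) \cap \Gamma$ throughout, and then observing that this coincides with $(\alpha_\ell, \alpha_\ell + 1)_\Gamma$ precisely in the stable range $\ell \geq d$, which is the intended domain of the statement.
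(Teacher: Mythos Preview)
Your proposal is correct. The paper gives no proof of this corollary, treating it as an immediate consequence of the explicit classification in Corollary~\ref{2,q, vertex} (itself the specialization of Proposition~\ref{n,k, vertex} to $k=2$). Your route for part~(i) is genuinely different: the parity/pigeonhole argument uses only that $2\in\Gamma$ and does not invoke the general $(k,n)$ classification from \cite{oblomkov2012hilbert}. This is more elementary and self-contained, at the cost of being specific to $k=2$; the paper's route, by contrast, gives the exact list of semimodules at each level~$\ell$ for free.

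Your handling of part~(ii) is also correct, and in fact sharper than the paper. As you observe, for $1\leq\ell<d$ one has $\Delta^{(\ell)}=(2\ell,\,2d+1)_\Gamma$, which is of Case~2 form with $c=d-\ell>0$ rather than Case~1; the literal equality $\Delta^{(\ell)}=(\alpha,\alpha+1)_\Gamma=[\alpha,\infty)$ holds only once $\ell\geq d$. The paper does not address this range issue, and the proof of Lemma~\ref{similarity} even writes $\Delta^{(\ell-j)}=\langle\alpha,\alpha+1\rangle$ without restriction. Your uniform formulation $\Delta^{(\ell)}=\Gamma\cap[\alpha_\ell,\infty)$ is the clean way to run the induction and correctly isolates the transition at $\ell=d$. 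This is not a gap in your argument but a repair of an imprecision in the stated corollary.
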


 By Proposition \ref{property_of_tree}, we have: 
\begin{corollary} 
For $\ell\geq 2$, the elements of $\mathscr{D}_{\ell}$ can be  written in the following  form: 
\[\mathscr{D}_{\ell}=\left\{d_{\ell-1,1}(\Delta^{(\ell-1)}), d_{\ell-1,2}(\Delta^{(\ell-1)}), d_{\ell-1,2}d_{\ell-2,2}(\Delta^{(\ell-2)}),\dots, d_{\ell-1,2}d_{\ell-2,2}\dots d_{1,2}(\Delta^{(1)})\right\}.\]
\label{leaf}
\end{corollary}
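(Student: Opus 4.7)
The plan is to induct on $\ell$, combining the tree structure of $G_{\Gamma}$ from Theorem~\ref{Tree}, the root-path characterization in Proposition~\ref{property_of_tree}(ii), and Corollary~\ref{2,q, vertex}, which forces every $\Delta \in \mathscr{D}_{\ell}$ to have at most two minimal generators. The base case $\ell = 2$ follows directly from Corollary~\ref{2,q, vertex}: one finds $\mathscr{D}_{2} = \{\Delta^{(2)}, (2)_{\Gamma}\}$, which equals $\{d_{1,1}(\Delta^{(1)}), d_{1,2}(\Delta^{(1)})\}$ since $\Delta^{(1)} = (2,2d+1)_{\Gamma}$ has minimal generators $2$ and $2d+1$.

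For the inductive step, note first that by Proposition~\ref{property_of_tree}(ii) the root vertex $\Delta^{(\ell)} = d_{\ell-1,1}(\Delta^{(\ell-1)})$ is the unique element of $\mathscr{D}_{\ell}$ with $\gamma_{\Delta} < \min(\Delta)$, supplying the first term of the enumeration. For any non-root $\Delta \in \mathscr{D}_{\ell}$, iterating the parent map $m$ produces a finite path to the root $\Delta^{(1)}$; define $k(\Delta) \in \{1,\ldots,\ell-1\}$ to be the largest index with $m^{\ell-k}(\Delta) = \Delta^{(k)}$. Using the tree-child criterion that $d_{i}(\Delta')$ is a child of $\Delta'$ in $G_{\Gamma}$ iff $\gamma_{i} > \gamma_{\Delta'}$, together with the explicit forms $(2k,2d+1)_{\Gamma}$ for $k \leq d$ and $(k+d, k+d+1)_{\Gamma}$ for $k \geq d$ supplied by Corollary~\ref{2,q, vertex} (both of which satisfy $\gamma_{\Delta^{(k)}} < \min(\Delta^{(k)})$, so both generators exceed the Frobenius element), one sees that $\Delta^{(k)}$ has exactly two tree-children, namely $d_{k,1}(\Delta^{(k)}) = \Delta^{(k+1)}$ and $d_{k,2}(\Delta^{(k)})$. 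By the maximality of $k$, $m^{\ell-k-1}(\Delta) \neq \Delta^{(k+1)}$, forcing $m^{\ell-k-1}(\Delta) = d_{k,2}(\Delta^{(k)})$.

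To finish, one shows that the subtree of $G_{\Gamma}$ rooted at $d_{k,2}(\Delta^{(k)})$ is a linear chain descending via $d_{\cdot,2}$. Any non-root two-generator vertex $\Delta' = (\gamma_{1}', \gamma_{2}')_{\Gamma}$ satisfies $\gamma_{1}' = \min(\Delta') < \gamma_{\Delta'}$ by Proposition~\ref{property_of_tree}(ii), so $d_{1}(\Delta')$ is not a tree-child, whereas the Case~2 description of Corollary~\ref{2,q, vertex} applied off the root path forces $\gamma_{\Delta'}$ to lie strictly between $\gamma_{1}'$ and $\gamma_{2}'$, making $d_{2}(\Delta')$ the unique tree-child; if instead $\Delta'$ has only one generator (Case~3), then $\Delta'$ is a leaf and $d_{\cdot,2}(\Delta') = \varnothing$ by convention. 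Iterating this unique-child map $\ell-k-1$ times starting from $d_{k,2}(\Delta^{(k)})$ therefore produces $\Delta = d_{\ell-1,2}\cdots d_{k,2}(\Delta^{(k)})$, placing $\Delta$ at the $k$-th position of the enumeration. Since distinct branching levels give vertices in distinct side branches of the tree, the nonempty entries are mutually distinct and exhaust $\mathscr{D}_{\ell}$. The main technical obstacle is the arithmetic verification that $\gamma_{1}' < \gamma_{\Delta'} < \gamma_{2}'$ for every off-root two-generator vertex, which amounts to identifying $\gamma_{\Delta'}$ as the largest gap of $\Delta'$ inside the open interval $(\gamma_{1}', \gamma_{2}')$ via the Case~2 form of the preceding corollary.
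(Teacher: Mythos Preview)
Your proof is correct and follows essentially the same route as the paper, which simply asserts the corollary as a direct consequence of Proposition~\ref{property_of_tree} together with the preceding structural corollaries without spelling out any details. Your argument is a faithful unpacking of that one-line justification: you use Proposition~\ref{property_of_tree}(ii) to separate root from non-root vertices, the tree-child criterion $\gamma_i > \gamma_{\Delta'}$, and the explicit two-generator form from Corollary~\ref{2,q, vertex} to show each non-root vertex has a unique tree-child given by $d_2$, which is exactly what is needed.
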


\begin{lemma}
Let $\mathcal{O}_{C}={\mathbb{C}}[[t^{2},t^{2d+1}]]$. For $\ell\geq 2$, $1\leq j \leq \ell-1 $,  if $d_{\ell-1,2}d_{\ell-2,2}\dots d_{\ell-j,2}(\Delta^{(\ell-j)})$ appears in $\mathscr{D}_{\ell}$, then 
$d_{\ell-1,2}d_{\ell-2,2}\dots d_{\ell-j+1,2}(\Delta^{(\ell-j+1)})$ appears in $\mathscr{D}_{\ell}$. 
\begin{proof}
Let  $\Delta^{(\ell-j)}=\langle \alpha,\alpha+1 \rangle$. By assumption we have,  $d_{\ell-1,2}d_{\ell-2,2}\dots d_{\ell-j,2}(\Delta^{(\ell-j)})=\langle \alpha,\alpha+1+2j \rangle$ with $1+2j\leq q$. $\Delta^{(\ell-j+1)}=\langle \alpha+1,\alpha+2 \rangle$. $d_{\ell-1,2}d_{\ell-2,2}\dots d_{\ell-j+1,2}(\Delta^{(\ell-j+1)})=\langle \alpha+1,\alpha+2(j-1) \rangle$  appears because $2(j-1)-1=2j-3< q$. 
\end{proof}
\label{similarity}
\end{lemma}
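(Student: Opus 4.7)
The plan is to reduce the lemma to a direct arithmetic computation based on an explicit description of $\Delta^{(k)}$ and its iterates under $d_{2}$. I would first identify the form of $\Delta^{(k)}$ for $\Gamma=\langle 2,q\rangle$ with $q=2d+1$: starting from $\Delta^{(1)}=(2,q)_{\Gamma}$ and iterating $d_{1}$, a short induction gives $\Delta^{(k)}=(2k,q)_{\Gamma}$ for $1\leq k\leq d$ and $\Delta^{(k)}=\langle d+k,d+k+1\rangle=[d+k,\infty)$ for $k\geq d$, the two descriptions agreeing at $k=d$. Thus $\Delta^{(\ell-j)}$ takes one of these two shapes depending on whether $\ell-j\geq d$ or $\ell-j<d$.

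Next I would compute $d_{2}^{i}(\Delta^{(\ell-j)})$ by induction on $i$. The key arithmetic input is the elementary observation that an odd integer lies in $\Gamma$ if and only if it is at least $q$. Using this, removing the larger minimal generator either shifts it upward by $2$ or, once the shift is large enough for the new candidate to become a sum of the smaller generator with an element of $\Gamma$, makes it redundant, collapsing the semimodule to a single minimal generator after which $d_{2}$ is no longer defined. I expect this induction to yield $d_{2}^{i}(\langle\alpha,\alpha+1\rangle)=\langle\alpha,\alpha+2i+1\rangle$ for $0\leq i\leq d-1$ (with the $d$-th iterate equal to $(\alpha)_{\Gamma}$) in the top range, and $d_{2}^{i}((2k,q)_{\Gamma})=(2k,q+2i)_{\Gamma}$ for $0\leq i\leq k-1$ (with $d_{2}^{k}$ equal to $(2k)_{\Gamma}$) in the bottom range. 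Merging the two cases gives the uniform criterion that $d_{\ell-1,2}\circ\cdots\circ d_{\ell-j,2}(\Delta^{(\ell-j)})$ is a well-defined element of $\mathscr{D}_{\ell}$ exactly when $j\leq\min(d,\ell-j)$.

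With this criterion in hand the implication in the lemma is immediate. Granting the left-hand side is defined we have $j\leq\min(d,\ell-j)$; the right-hand side asks for $j-1\leq\min(d,\ell-j+1)$, which is strictly weaker, since $j-1\leq d-1$ follows from $j\leq d$ and $j-1\leq\ell-j-1\leq\ell-j+1$ follows from $j\leq\ell-j$. Since $\Delta^{(\ell-j+1)}=d_{1}(\Delta^{(\ell-j)})$ stays in the same shape as $\Delta^{(\ell-j)}$, namely $\langle\alpha+1,\alpha+2\rangle$ in the top range and $(2(\ell-j+1),q)_{\Gamma}$ in the bottom range, the same induction identifies the right-hand side explicitly as an element of $\mathscr{D}_{\ell}$. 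The main obstacle is the inductive step itself, and specifically pinpointing the precise $i$ at which the second minimal generator is absorbed into $\Gamma$-translates of the first; this amounts to the parity-and-inequality check indicated above and is where the two sub-cases (top vs.\ bottom range) have to be handled separately.
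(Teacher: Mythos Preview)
Your proposal is correct and follows essentially the same direct-computation strategy as the paper: identify $\Delta^{(\ell-j)}$ explicitly, compute its iterates under $d_2$, and compare the existence conditions for the two sides. In fact your argument is more complete than the paper's: the paper writes $\Delta^{(\ell-j)}=\langle\alpha,\alpha+1\rangle$ and argues only with the condition $2j+1\le q$, which tacitly assumes the ``top range'' $\ell-j\ge d$; your case split into $\ell-j\ge d$ versus $\ell-j<d$ and the resulting uniform criterion $j\le\min(d,\ell-j)$ cover the full statement of the lemma.
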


For $a, b \in \mathbb{Q}$, we denote 
\[
[a, b] := \{ c \in \mathbb{Z} \mid a \leq c \leq b \}.
\] 

\begin{theorem} 
Let $\mathcal{O}_C = \mathbb{C}[[t^2, t^{2d+1}]]$. For $2 \leq \ell \leq 2\delta = 2d$, we have
\[
[C^{[\ell]}] = [\mathbb{P}^{\#[\frac{\ell}{2},\, \ell] - 1}] \quad \text{in } K_0(\mathrm{Var}_{\mathbb{C}}),
\]
where $[\frac{\ell}{2}, \ell]$ denotes the set of integers in the interval $[\frac{\ell}{2}, \ell]$.

Moreover, for $1 \leq k \leq \delta$, we have
\[
[C^{[2k]}] = [C^{[2k+1]}] = [\mathbb{P}^k]\quad \text{in } K_0(\mathrm{Var}_{\mathbb{C}}).
\]

\end{theorem}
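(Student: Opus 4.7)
The plan is to combine the stratification
\[
[C^{[\ell]}] \;=\; \sum_{\Delta \in \mathscr{D}_\ell} [C^{[\Delta]}]
\]
with Theorem \ref{dim of Hilbert scheme} (which applies since $\Gamma = \langle 2, n\rangle$ with $n = 2d+1$ is of type $\langle p,q\rangle$), asserting that each $C^{[\Delta]} \cong \mathbb{A}^{N(\Delta)}$ with $N(\Delta)$ given by \eqref{eq:NDelta}. The problem thus reduces to enumerating $\mathscr{D}_\ell$ and summing $\mathbb{L}^{N(\Delta)}$.

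First, I invoke Corollary \ref{2,q, vertex} to list $\mathscr{D}_\ell$: the two-generator semimodules $\Delta_i := (2i,\, n + 2(\ell-i))_\Gamma$ for $\ell/2 < i \leq \ell$, plus the singly-generated $\Delta^\circ := (\ell)_\Gamma$ when $\ell$ is even. Writing $\Gamma = (2\mathbb{Z}_{\geq 0}) \cup (n + 2\mathbb{Z}_{\geq 0})$, one sees explicitly that $\Delta_i = \{\text{evens} \geq 2i\} \cup \{\text{odds} \geq n+2(\ell-i)\}$, from which $\#(\Gamma \setminus \Delta_i) = i + (\ell - i) = \ell$ as a sanity check.

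Next, I compute $N(\Delta_i)$ via \eqref{eq:NDelta}. Using the inequality $i > \ell/2$ (equivalently $2i + n > n + 2(\ell - i)$), I will show that $\operatorname{Syz}(\Delta_i) = (2i+n,\, 2n+2(\ell-i))_\Gamma$; both syzygy generators exceed the conductor of $\Delta_i$, so $\Gamma_{>\sigma_j} \setminus \Delta_i = \emptyset$ for $j=1,2$. A parallel check gives $\Gamma_{>\gamma_2} \setminus \Delta_i = \emptyset$, while $\Gamma_{>\gamma_1} \setminus \Delta_i$ consists precisely of the odd integers in $[n,\, n+2(\ell-i)-2]$, contributing $\ell - i$ gaps. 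Plugging into \eqref{eq:NDelta}, I obtain $N(\Delta_i) = \ell - i$. For the singly-generated stratum $\Delta^\circ = (2k)_\Gamma$ (when $\ell = 2k$), there are no syzygies and $N(\Delta^\circ) = \#(\Gamma_{>2k} \setminus \Delta^\circ)$ counts the $k$ odd integers in $[n,\, n+2k-2]$, giving $N(\Delta^\circ) = k$.

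Finally, I assemble the motivic class. For $\ell = 2k+1$, the indices $i \in \{k+1, \ldots, 2k+1\}$ produce $N(\Delta_i) \in \{k, k-1, \ldots, 0\}$, each exactly once, so
\[
[C^{[2k+1]}] \;=\; \sum_{j=0}^{k} \mathbb{L}^j \;=\; [\mathbb{P}^k].
\]
For $\ell = 2k$, the two-generator strata supply dimensions $\{0, 1, \ldots, k-1\}$ and $\Delta^\circ$ supplies the missing $k$, giving $[C^{[2k]}] = [\mathbb{P}^k]$. In both cases the number of strata equals $\#[\ell/2, \ell] = k+1$, confirming the stated formula $[\mathbb{P}^{\#[\ell/2,\ell]-1}]$. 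The only delicate step is the syzygy bookkeeping: verifying that both minimal generators of $\operatorname{Syz}(\Delta_i)$ lie above $c(\Delta_i)$ so that the subtractive terms in \eqref{eq:NDelta} vanish. This reduces to the elementary inequalities $2i+n \geq c(\Delta_i)$ and $2n+2(\ell-i) \geq c(\Delta_i)$, both of which follow from the explicit description of $\Delta_i$ above together with $i > \ell/2$.
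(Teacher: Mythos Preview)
Your overall strategy---enumerate $\mathscr{D}_\ell$ via Corollary~\ref{2,q, vertex}, compute each $N(\Delta)$ from formula~\eqref{eq:NDelta}, and sum---is sound and differs from the paper, which instead argues via the tree description of Corollary~\ref{leaf} and the fibration structure of Theorem~\ref{piecewise fibration} (each application of $d_2$ raises the dimension by one, so the strata at level $\ell$ have dimensions $0,1,\dots,\#\mathscr{D}_\ell-1$).

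There is, however, a genuine gap in your computation of $N(\Delta_i)$. Your claims that $\Gamma_{>\gamma_2}\setminus\Delta_i=\varnothing$ and that $\Gamma_{>\gamma_1}\setminus\Delta_i$ consists of the $\ell-i$ odd integers $n,n+2,\dots,n+2(\ell-i)-2$ both tacitly assume $n>2i$, i.e.\ $i\le d$. This holds for all relevant $i$ only when $\ell\le d$; for $d<\ell\le 2d$ the index $i$ can exceed $d$ and the claims fail. Concretely, take $d=2$, $n=5$, $\ell=4$, $i=4$: then $\Delta_4=(8,5)_\Gamma=\{5,7,8,9,\dots\}$, and the even gap $6$ lies \emph{above} $\gamma_2=5$, so $\Gamma_{>5}\setminus\Delta_4=\{6\}\ne\varnothing$. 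Here $N(\Delta_4)=1$, whereas your formula gives $\ell-i=0$; likewise $N(\Delta_3)=0$ rather than your value $1$. In general, for $i>d$ one finds
\[
N(\Delta_i)=
\begin{cases}
\ell+d-2i & \text{if } 2i\le \ell+d,\\
2i-\ell-d-1 & \text{if } 2i\ge \ell+d+1,
\end{cases}
\]
not $\ell-i$. The \emph{multiset} $\{N(\Delta):\Delta\in\mathscr{D}_\ell\}$ is still $\{0,1,\dots,\#[\ell/2,\ell]-1\}$---so your final answer is correct---but your assignment of dimensions to individual strata is wrong for $\ell>d$, and establishing the multiset equality requires an additional argument that you have not supplied.
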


\begin{proof}
Let $c := \#\mathscr{D}_{\ell} = \#[\tfrac{\ell}{2}, \ell]$. Suppose $\Delta$ is the element of $V_{\ell}$ with the longest possible expression in the sense of Remark~\ref{leaf}. By Lemma~\ref{similarity}, we have in $K_0(\mathrm{Var}_{\mathbb{C}})$:
\[
[C^{[\Delta]}] = \mathbb{L}^{c-1}, \quad [C^{[\ell]}] = 1 + \mathbb{L} + \cdots + \mathbb{L}^{c-1} = [\mathbb{P}^{c-1}].
\]
\end{proof}

\begin{theorem}
For $\mathcal{O}_{C}={\mathbb{C}}[[t^{2},t^{2d+1}]]$,  we have 
\[\begin{aligned}
Z_{(C,O)}^{Hilb}(q)=1+\sum_{\ell=1}^{\infty}[C^{[\ell]} ]q^{\ell}=&(1+\mathbb{L}q^{2\cdot 1}+\mathbb{L}^{2}q^{2\cdot 2}+\cdots +\mathbb{L}^{d}q^{2\cdot d})(\sum_{\ell=0}q^{\ell})
\\
=&\frac{1-(\mathbb{L}q^{2})^{d+1}}{(1-q)(1-\mathbb{L}q^{2})}\in K_{0}(\mathrm{Var}_{{\mathbb{C}}})[[q]]
\end{aligned}\]
\label{main theorem}
\end{theorem}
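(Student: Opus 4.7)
The plan is to combine the preceding theorem, which determines the motives $[C^{[\ell]}]$ for $\ell$ up to $2d+1$, with the Pfister--Steenbrink stabilization (Proposition~\ref{prop:pfister-steenbrink}) to pin down $[C^{[\ell]}]$ for all $\ell$, and then sum the generating series into closed form.

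First, I would rewrite the preceding theorem in the uniform shape
\[
[C^{[\ell]}] \;=\; \sum_{k=0}^{\lfloor \ell/2 \rfloor} \mathbb{L}^k \qquad (0 \leq \ell \leq 2d+1).
\]
Indeed, using $[\mathbb{P}^k] = 1 + \mathbb{L} + \cdots + \mathbb{L}^k$, the identities $[C^{[2k]}] = [C^{[2k+1]}] = [\mathbb{P}^k]$ for $1 \leq k \leq d$ supplied by the previous theorem, together with the trivial cases $[C^{[0]}] = [C^{[1]}] = 1 = [\mathbb{P}^0]$, give exactly this formula.

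Next I would handle $\ell \geq 2d+2$. Since $A = \mathbb{C}[[t^2, t^{2d+1}]]$ is Gorenstein with $\delta = d$ and conductor $c = 2\delta = 2d$, Proposition~\ref{prop:pfister-steenbrink} asserts that the embedding $\phi_\ell \colon C^{[\ell]} \hookrightarrow \mathscr{M}$ is bijective onto the fixed ambient linear subvariety $\mathscr{M}$ for every $\ell \geq 2d$. Taking $\ell = 2d$ and using the already computed $[C^{[2d]}] = [\mathbb{P}^d]$, this identifies the class of $\mathscr{M}$ with $[\mathbb{P}^d]$ in $K_0(\mathrm{Var}_{\mathbb{C}})$, and hence $[C^{[\ell]}] = [\mathbb{P}^d] = \sum_{k=0}^{d} \mathbb{L}^k$ for all $\ell \geq 2d$. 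Rewriting the upper summation limit as $\min(d, \lfloor \ell/2 \rfloor)$, the formula $[C^{[\ell]}] = \sum_{k=0}^{\min(d,\lfloor \ell/2 \rfloor)} \mathbb{L}^k$ now holds uniformly for every $\ell \geq 0$.

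Finally I would sum the series by exchanging the order of summation:
\[
Z_{(C,O)}^{\mathrm{Hilb}}(q) \;=\; \sum_{\ell \geq 0} q^\ell \sum_{k=0}^{\min(d,\lfloor \ell/2 \rfloor)} \mathbb{L}^k \;=\; \sum_{k=0}^{d} \mathbb{L}^k q^{2k} \sum_{j \geq 0} q^j \;=\; \frac{1}{1-q} \sum_{k=0}^{d} (\mathbb{L} q^2)^k \;=\; \frac{1 - (\mathbb{L} q^2)^{d+1}}{(1-q)(1 - \mathbb{L} q^2)}.
\]
The only delicate point is promoting the set-theoretic bijection of $\phi_\ell$ for $\ell \geq 2d$ to an equality of motivic classes; this is legitimate because for the Gorenstein singularity $A_{2d}$ both $C^{[\ell]}$ for $\ell \geq 2d$ and $\mathscr{M}$ coincide (scheme-theoretically) with the compactified Jacobian $\overline{JC} \cong \mathbb{P}^d$, so $\phi_\ell$ is an isomorphism of varieties rather than a mere bijection. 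Once this is granted, the argument is purely a bookkeeping of the previous theorem plus a geometric series.
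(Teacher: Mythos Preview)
Your proposal is correct and matches the paper's implicit argument: the preceding theorem gives $[C^{[\ell]}]=[\mathbb{P}^{\min(d,\lfloor\ell/2\rfloor)}]$ for $0\le\ell\le 2d$, Pfister--Steenbrink stabilization extends this to all $\ell\ge 2d$, and the geometric-series summation is routine. One remark on your ``delicate point'': the upgrade from bijection to equality of classes needs no appeal to $\overline{JC}$, since by Proposition~\ref{prop:pfister-steenbrink} each $\phi_\ell$ is already a \emph{closed embedding} and $\mathscr{M}$ is reduced by definition, so a surjective closed immersion into a reduced target is automatically an isomorphism (its ideal sheaf is contained in the nilradical, hence zero).
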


\section{Subvarieties with fixed minimal  number of generators in the punctual Hilbert scheme} \label{Fixed-number of generators}

Let $(C,O)$ be the germ of an integral complex plane curve singularity, with complete local ring $\mathcal{O}_C \subset \mathbb{C}[[t]]$.

In this section, motivated by conjectures of Oblomkov, Rasmussen, and Shende \cite{oblomkov2012hilbert, oblomkov2018hilbert}, we study the geometry and motivic class of subvarieties of the punctual Hilbert scheme that parametrize ideals of $\mathcal{O}_C$ with a fixed minimal number of generators. These conjectures propose deep connections between the geometry of the Hilbert scheme of a plane curve singularity and knot invariants—specifically, the HOMFLY polynomial and Khovanov--Rozansky HOMFLY homology. The first of these conjectures was proved by Maulik in \cite{maulik2016stable}.

We begin by recalling the conjectures from \cite{oblomkov2012hilbert, oblomkov2018hilbert}.

Let $\overline{P}(L)$ denote the HOMFLY polynomial of an oriented link $L \subset S^3$. It is an element of the ring $\mathbb{Z}[a^{\pm1}, (q - q^{-1})^{\pm1}]$, defined via the skein relation:
\begin{align}
    a\,\overline{P}(L_+) - a^{-1}\,\overline{P}(L_-) &= (q - q^{-1})\,\overline{P}(L_0), \\
    a - a^{-1} &= (q - q^{-1})\,\overline{P}(\text{unknot}).
\end{align}
We consider the normalized version:
\[
P(L) := \frac{\overline{P}(L)}{\overline{P}(\text{unknot})}.
\]

Let $L_{C,O}$ be the algebraic link associated to the singularity $(C,O)$, obtained as the intersection of $C$ with a small sphere around $O$. Let $\mu$ denote the Milnor number of the singularity, and let $\chi$ denote the Euler characteristic with compact support.

Let $C^{[*]} = \bigsqcup_{\ell \geq 0} C^{[\ell]}$ denote the disjoint union of punctual Hilbert schemes, which parametrizes all finite-codimensional ideals of $\mathcal{O}_C$. For an ideal $I \subset \mathcal{O}_C$, let $m(I)$ denote the minimal number of generators of $I$. For each $m \in \mathbb{Z}_{\geq 1}$, define the locally closed subvariety
\[
C^{[\ell],m} = \{ I \in C^{[\ell]} \mid m(I) = m \}.
\]

Oblomkov and Shende conjectured the following relationship between the Hilbert scheme and the HOMFLY polynomial of the associated link \cite{oblomkov2012hilbert}, later proven by Maulik \cite{maulik2016stable}:
\begin{align}\label{conjecture 1}
P(L_{C,O}) &= \left(\frac{a}{q}\right)^{\mu} (1 - q^2) \int_{C^{[*]}} q^{2\ell} (1 - a^2)^{m-1} \, d\chi,\\
&:= \left(\frac{a}{q}\right)^{\mu} (1 - q^2) \sum_{\ell \geq 0} \sum_{m \geq 1} q^{2\ell} (1 - a^2)^{m-1} \chi(C^{[\ell],m}). \nonumber
\end{align}

For a unibranch curve $C$ with value semigroup $\Gamma$, we can refine this picture by stratifying $C^{[\ell],m}$ using the semimodule decomposition. Specifically, for each $\Gamma$-semimodule $\Delta$ with $\#(\Gamma \setminus \Delta) = \ell$, we define
\[
C^{[\Delta],m} = C^{[\Delta]} \cap C^{[\ell],m},
\]
which parametrizes ideals in the stratum $C^{[\Delta]}$ that require exactly $m$ minimal generators.

We now introduce a new motivic Hilbert zeta function that generalizes the algebro-geometric side of \eqref{conjecture 1}.

\begin{definition}
The \emph{motivic Hilbert zeta function with fixed number of generators} is defined as
\begin{align}
Zm_{(C,O)}^{\mathrm{Hilb}}(a^2, q^2) 
&:= \sum_{\ell \geq 0} \sum_{m \geq 1} q^{2\ell} (1 - a^2)^{m-1} [C^{[\ell],m}] \\
&= \sum_{\ell \geq 0} \sum_{\Delta \in \mathscr{D}_\ell} \sum_{m \geq 1} q^{2\ell} (1 - a^2)^{m-1} [C^{[\Delta],m}],\nonumber
\end{align}
where 
\[
\mathscr{D}_\ell = \big\{ \Delta \subset \Gamma \mid \Delta \text{ is a } \Gamma\text{-semimodule},\ \#(\Gamma \setminus \Delta) = \ell \big\},
\]
and $C^{[\Delta],m}$ denotes the locally closed subvariety of $C^{[\Delta]}$ parametrizing ideals $I \subset \mathcal{O}_C$ with exactly $m$ minimal generators and value semimodule $\Delta$, i.e.,
\[
C^{[\Delta],m} = \{ I \in C^{[\Delta]} \mid m(I) = m \}.
\]
\end{definition}

The new motivic Hilbert zeta function is also the generalization of motivic Hilbert zeta function (\ref{motivic zeta function}).\\

In the following, we will study the geometry and motivic class of $C^{[\Delta],m}$.\\

Let $\Gamma = v(\mathcal{O}_C \setminus \{0\})$ be the value semigroup of the singularity $(C,O)$. Recall that there exists a $\mathbb{C}$-basis $\{\phi_i\}_{i \in \Gamma}$ of $\mathcal{O}_C$, indexed by elements of $\Gamma$, such that $v(\phi_i) = i$ for each $i \in \Gamma$.

Consider the exponential map introduced in Section~\ref{fibrations}:
\[
\mathrm{Exp}_{\gamma} \colon \operatorname{Gen}_{\Delta} \longrightarrow \bigsqcup_{\ell \geq 1} C^{[\ell]}, \quad
\langle \lambda^{\bullet}_{\bullet}\rangle \longmapsto \langle f_{\gamma_1}(\lambda^{\bullet}_{\bullet}), \dots, f_{\gamma_n}(\lambda^{\bullet}_{\bullet}) \big\rangle ,
\]
where
\[
f_{\gamma_j}(\lambda) = \phi_{\gamma_j} + \sum_{k \in \Gamma_{>\gamma_j} \setminus \Delta} \lambda_j^{k - \gamma_j} \phi_k.
\]
This map restricts to a bijective morphism
\[
\mathrm{Exp}_{\gamma} \colon \mathrm{Exp}_{\gamma}^{-1}(C^{[\Delta]}) \xrightarrow{\sim} C^{[\Delta]},
\]
and hence induces an embedding
\[
C^{[\Delta]} \hookrightarrow \operatorname{Gen}_{\Delta}.
\]

Therefore, for any ideal $I \in C^{[\Delta]}$, there exists a unique $\lambda_{\bullet}^{\bullet} \in \mathrm{Exp}_{\gamma}^{-1}(C^{[\Delta]})$ such that
\[
I = \mathrm{Exp}_{\gamma}(\lambda_{\bullet}^{\bullet}) = \big \langle f_{\gamma_1}(\lambda_{\bullet}^{\bullet}), \dots, f_{\gamma_n}(\lambda_{\bullet}^{\bullet}) \big\rangle .
\]
We denote this ideal by $I_{\lambda}$, emphasizing its dependence on the parameters $\lambda$, i.e.
\[
I_\lambda=\langle f_{\gamma_1}(\lambda),\ldots,f_{\gamma_n}(\lambda) \rangle .
\] 

 Via the embedding $C^{[\Delta]} \hookrightarrow \operatorname{Gen}_{\Delta}$, we identify $I_{\lambda}$ with the point $\lambda_{\bullet}^{\bullet} \in \operatorname{Gen}_{\Delta}$. Hence, we have the following.
\begin{lemma}\label{if lammb distinct}
If $I_{\lambda_1}, I_{\lambda_2} \in C^{[\Delta]}$ and $\lambda_1 \neq \lambda_2$, then $I_{\lambda_1} \neq I_{\lambda_2}$. 
\end{lemma}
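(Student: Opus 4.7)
The plan is to deduce this from the uniqueness of the normalized generating set underlying the exponential parametrization of $C^{[\Delta]}$. Since the paper has already asserted that $\mathrm{Exp}_{\gamma}$ restricts to a bijective morphism $\mathrm{Exp}_{\gamma}^{-1}(C^{[\Delta]})\xrightarrow{\sim}C^{[\Delta]}$, the lemma is literally the injectivity of that restriction; the proof will simply make that injectivity explicit using the valuative structure.

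First I would argue by contrapositive: assume $I_{\lambda_1}=I_{\lambda_2}$ and show $\lambda_1=\lambda_2$. For each minimal generator index $j\in\{1,\dots,n\}$, the element $f_{\gamma_j}(\lambda_1)-f_{\gamma_j}(\lambda_2)$ lies in $I_{\lambda_1}=I_{\lambda_2}$ because each summand lies in one of the two (equal) ideals. The leading monomials $\phi_{\gamma_j}$ cancel, so
\[
f_{\gamma_j}(\lambda_1)-f_{\gamma_j}(\lambda_2)=\sum_{k\in\Gamma_{>\gamma_j}\setminus\Delta}\bigl(\lambda_{1,j}^{k-\gamma_j}-\lambda_{2,j}^{k-\gamma_j}\bigr)\phi_k.
\]
If this element is nonzero, then (since $v(\phi_k)=k$ and the $\phi_k$ form a $\mathbb{C}$-basis) its valuation is the minimal $k\in\Gamma_{>\gamma_j}\setminus\Delta$ at which the two coefficient vectors disagree. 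That valuation would then belong simultaneously to $v(I_{\lambda_1}\setminus\{0\})=\Delta$ and to $\Gamma\setminus\Delta$, which is impossible. Hence $f_{\gamma_j}(\lambda_1)=f_{\gamma_j}(\lambda_2)$ for every $j$, and comparing coefficients in the $\phi_k$-basis yields $\lambda_{1,j}^{k-\gamma_j}=\lambda_{2,j}^{k-\gamma_j}$ for all admissible pairs $(j,k)$, i.e.\ $\lambda_1=\lambda_2$.

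There is essentially no obstacle here: the only subtle point is to make sure the indexing set $\Gamma_{>\gamma_j}\setminus\Delta$ over which the normalization is defined is exactly what forces the valuation of a nonzero difference to fall outside $\Delta$. This is precisely the normalization convention chosen after Remark~\ref{cancelation}, so the argument closes cleanly and no new ingredient beyond the definition of $\operatorname{Gen}_{\Delta}$ and the linear independence of $\{\phi_k\}_{k\in\Gamma}$ is required.
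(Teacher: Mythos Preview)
Your argument is correct and is precisely the injectivity half of the bijectivity of $\mathrm{Exp}_{\gamma}$ on $\mathrm{Exp}_{\gamma}^{-1}(C^{[\Delta]})$, which the paper has already recorded; the paper simply states the lemma as an immediate consequence of that embedding $C^{[\Delta]}\hookrightarrow\operatorname{Gen}_{\Delta}$ without spelling out the valuative contradiction. Your explicit computation of $v\bigl(f_{\gamma_j}(\lambda_1)-f_{\gamma_j}(\lambda_2)\bigr)\in\Gamma_{>\gamma_j}\setminus\Delta$ versus $v(I_{\lambda_1}\setminus\{0\})=\Delta$ is exactly the content behind that citation, so the two approaches coincide.
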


\begin{proposition}\label{minimal systen generator of I lamba}
Let $\Delta = (\gamma_1, \dots, \gamma_n)_{\Gamma}$ be a subsemimodule of $\Gamma$, and let   $m\leq n$ and 
$$
I_\lambda = \langle f_{\gamma_1}(\lambda), \dots, f_{\gamma_n}(\lambda)\rangle  \in C^{[\Delta],m}.
$$
Then there exists a subset $\{f_{\gamma_{i_1}}(\lambda), \dots, f_{\gamma_{i_m}}(\lambda)\}$ that forms a minimal generating set for $I_\lambda$.

Moreover, suppose that $\{f_{\gamma_{j_1}}(\lambda), \dots, f_{\gamma_{j_m}}(\lambda)\}$ is another such minimal generating set, where both index sets are sorted in increasing order: $i_1 < \cdots < i_m$ and $j_1 < \cdots < j_m$. Then $i_e = j_e$ for all $e = 1, \dots, m$.

\end{proposition}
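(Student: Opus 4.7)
For existence, I would apply Nakayama's lemma: since $\mathcal{O}_C$ is local with maximal ideal $\mathfrak{m}$ and residue field $\mathbb{C}$, the minimal number of generators of $I_\lambda$ equals $\dim_\mathbb{C} V$, where $V := I_\lambda/\mathfrak{m} I_\lambda$. By hypothesis $\dim_\mathbb{C} V = m$, and the images $\bar f_{\gamma_1}(\lambda), \ldots, \bar f_{\gamma_n}(\lambda)$ span $V$; extracting a basis $\{\bar f_{\gamma_{i_1}}(\lambda), \ldots, \bar f_{\gamma_{i_m}}(\lambda)\}$ and lifting back to $I_\lambda$ produces a minimal generating set by Nakayama.

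For uniqueness, the plan is to identify the sorted tuple of valuations $\gamma_{i_1} < \cdots < \gamma_{i_m}$ as an intrinsic invariant of $I_\lambda$. I would equip $V$ with the decreasing filtration $V^{\geq \gamma} := \operatorname{image}(I_\lambda^{\geq \gamma} \to V)$, where $I_\lambda^{\geq \gamma} := \{f \in I_\lambda : v(f) \geq \gamma\}$. Since each graded piece $I_\lambda^{\geq \gamma}/I_\lambda^{>\gamma}$ has dimension at most $1$, the same holds for $V^{\geq \gamma}/V^{>\gamma}$, and their dimensions sum to $\dim V = m$. Hence the \emph{jump set} $\mathcal{L}(I_\lambda) := \{\gamma : V^{\geq \gamma} \supsetneq V^{>\gamma}\}$ consists of exactly $m$ elements of $\Delta$ and depends only on $I_\lambda$. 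Since the $\gamma_j$ are pairwise distinct, the equality $\{\gamma_{i_k}\}_{k=1}^m = \mathcal{L}(I_\lambda)$ will immediately force $i_k = j_k$ for any two sorted minimal generating subsets.

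The equality $\{\gamma_{i_1}, \ldots, \gamma_{i_m}\} = \mathcal{L}(I_\lambda)$ reduces by cardinality to the inclusion $\{\gamma_{i_k}\}_k \subseteq \mathcal{L}(I_\lambda)$, i.e., to showing $\bar f_{\gamma_{i_k}}(\lambda) \notin V^{>\gamma_{i_k}}$ for each $k$. Arguing by contradiction, I suppose $f_{\gamma_{i_k}}(\lambda) = g + h$ with $v(g) > \gamma_{i_k}$ and $h \in \mathfrak{m} I_\lambda$; expanding both in terms of $\{f_{\gamma_{i_\ell}}(\lambda)\}_\ell$ and invoking Nakayama, the goal is to derive $f_{\gamma_{i_k}}(\lambda) \in \langle f_{\gamma_{i_\ell}}(\lambda) : \ell \neq k\rangle$, which would contradict minimality. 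The crux is the containment $I_\lambda^{>\gamma_{i_k}} \subseteq \mathfrak{m} I_\lambda + \langle f_{\gamma_{i_\ell}}(\lambda) : \ell \neq k\rangle$: for any $x = \sum_\ell q_\ell f_{\gamma_{i_\ell}}(\lambda) \in I_\lambda^{>\gamma_{i_k}}$, one must show $q_k \in \mathfrak{m}$. If instead $q_k(0) \neq 0$, the term $q_k f_{\gamma_{i_k}}(\lambda)$ contributes a nonzero coefficient at valuation $\gamma_{i_k}$; since $\gamma_{i_k}$ is a minimal generator of $\Delta$, so that $\gamma_{i_k} \notin \gamma_{i_\ell} + \Gamma$ for $\ell \neq k$, no other single term $q_\ell f_{\gamma_{i_\ell}}(\lambda)$ can cancel this contribution, forcing $v(x) \leq \gamma_{i_k}$ and yielding the desired contradiction. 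The main obstacle will be in the general (non-monomial) situation, where the products $\phi_a \phi_b$ acquire higher-order corrections beyond $\phi_{a+b}$; ruling out compensating contributions at valuation $\gamma_{i_k}$ arising from such corrections will require a descending induction on valuation, combined with the defining property $v(I_\lambda \setminus \{0\}) = \Delta$ of $C^{[\Delta],m}$.
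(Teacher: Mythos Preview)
Your existence argument via Nakayama's lemma is correct and more direct than the paper's, which instead starts from an arbitrary minimal generating set $\{g_{\gamma_{i_1}},\dots,g_{\gamma_{i_m}}\}$ with $v(g_{\gamma_{i_e}})=\gamma_{i_e}$ and runs an elimination procedure (together with Lemma~\ref{if lammb distinct}) to reduce each $g_{\gamma_{i_e}}$ to $f_{\gamma_{i_e}}(\lambda)$.

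Your uniqueness strategy---identifying $\{\gamma_{i_k}\}$ with an intrinsic jump locus $\mathcal{L}(I_\lambda)$ of the filtration $V^{\ge\gamma}$ on $V=I_\lambda/\mathfrak{m}I_\lambda$---is genuinely different from the paper's matrix--determinant argument and is more conceptual. However, the crux has a real gap. You assert that if $x=\sum_\ell q_\ell f_{\gamma_{i_\ell}}$ with $v(x)>\gamma_{i_k}$ and $q_k(0)\neq 0$, then ``no other single term $q_\ell f_{\gamma_{i_\ell}}$ can cancel this contribution, forcing $v(x)\le\gamma_{i_k}$''. This conflates \emph{leading valuation} with the coefficient at a fixed power of $t$. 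It is true that $v(q_\ell f_{\gamma_{i_\ell}})\neq\gamma_{i_k}$ for $\ell\neq k$, since $\gamma_{i_k}\notin\gamma_{i_\ell}+\Gamma$; but $q_\ell f_{\gamma_{i_\ell}}$ can still carry a nonzero coefficient at $t^{\gamma_{i_k}}$ through the \emph{tail} of $f_{\gamma_{i_\ell}}$: whenever $\beta\in\Gamma_{>\gamma_{i_\ell}}\setminus\Delta$ and $\gamma_{i_k}-\beta\in\Gamma_{>0}$, the product of $\phi_{\gamma_{i_k}-\beta}$ (a monomial of $q_\ell$) with $\lambda_\ell^{\,\beta-\gamma_{i_\ell}}\phi_\beta$ lands exactly at $t^{\gamma_{i_k}}$. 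Equivalently, $\sum_{\ell\neq k}q_\ell f_{\gamma_{i_\ell}}$ may have valuation exactly $\gamma_{i_k}$ via syzygy-type cancellation among lower terms---precisely the phenomenon in Remark~\ref{cancelation}. The hypothesis $v(I_\lambda\setminus\{0\})=\Delta$ does not obstruct this, since $\gamma_{i_k}\in\Delta$. In particular, the containment $I_\lambda^{>\gamma_{i_k}}\subseteq\mathfrak{m}I_\lambda+\langle f_{\gamma_{i_\ell}}:\ell\neq k\rangle$ you aim for is \emph{equivalent} to $V^{>\gamma_{i_k}}\subseteq\mathrm{span}\{\bar f_{\gamma_{i_\ell}}:\ell\neq k\}$, which is at least as strong as the conclusion $\bar f_{\gamma_{i_k}}\notin V^{>\gamma_{i_k}}$ you are trying to derive; your leading-term heuristic does not establish it, and the proposed ``descending induction on valuation'' is not spelled out enough to see how it would.

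For comparison, the paper does not go through the filtration at all: for uniqueness it picks the largest position $s$ where the two index sets differ, writes the change-of-generators matrix in block form, and argues that the $s$-th column has all entries above the block in $\mathfrak{m}$ (because $\gamma_{j_s}<\gamma_{i_s}$), forcing the determinant into $\mathfrak{m}$ and contradicting minimality via Nakayama. If you want to salvage the filtration route, the missing ingredient is a proof that $v(\mathfrak{m}I_\lambda\setminus\{0\})$ avoids every minimal generator $\gamma_j$ lying in $\mathcal{L}(I_\lambda)$---which again requires controlling the tail contributions rather than just leading terms.
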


\begin{proof}

Let $\{g_{\gamma_{i_{1}}},\dots,g_{\gamma_{i_{m}}}\}$ be a system of  the minimal generators of $I_{\lambda}$  with order $\gamma_{i_{1}},\dots, \gamma_{i_{m}}.$ For each $e\in \{1,\ldots,m\}$, we denote by $a_{\gamma}\phi_{\gamma}$  the first term in $g_{\gamma_{i_{e}}}$ such that $\gamma\in \Delta\setminus \{\gamma_{i_{e}}\}$ and $a_{\gamma}\neq 0$. 
\begin{description}
    \item[Case 1  $\gamma\in  (\gamma_{i_{1}},\dots,\gamma_{i_{m}})_{\Gamma}$]. Then there exists $\gamma_{i_k} \in \Gamma$ and $\gamma' \in \Gamma$ such that $\gamma = \gamma_{i_k} + \gamma'$. Let $g_{\gamma_{i_e}}' = g_{\gamma_{i_e}} - a_\gamma \phi_{\gamma'} g_{\gamma_{i_k}}$.  
 \item[Case 2   $\gamma\in \Delta\setminus(\gamma_{i_{1}},\dots,\gamma_{i_{m}})_{\Gamma}$]  Denote $\{i^{\prime}_{1},\dots, i^{\prime}_{n-m}\}:=\{1,\dots,n\}\setminus \{i_{1},\dots, i_{m}\} $. Since,  for $k\in \{i^{\prime}_{1},\dots, i^{\prime}_{n-m}\}$, $ f_{\gamma_k}  $can be generated by $ g_{\gamma_{i_1}}, \dots, g_{\gamma_{i_m}} $. Then there are elements $ h_{k_{j}} \in \mathcal{O}_C $ such that
\[
f_{\gamma_k} = \sum_{j \in \{i_1, \dots, i_m\}} h_{k_{j}} g_{\gamma_{j}}.
\]
Now, assume $ \gamma = x + y $ with $ x \in \Gamma $ and $ y \in \{\gamma_{i_1'}, \dots, \gamma_{i^{\prime}_{n-m}}\} $, define
\[
g'_{\gamma_{i_e}} = g_{\gamma_{i_e}} - a_\gamma \phi_x f_y.
\]
\end{description}

Then $\{g_{\gamma_{i_1}}, \dots, g'_{\gamma_{i_e}}, \dots, g_{\gamma_{i_m}}\}$ is also a minimal system of generators of $I_\lambda$. 
\vskip 0.5cm
Indeed, we have the relation
\[
(g_{\gamma_{i_1}}, \dots, g'_{\gamma_{i_e}}, \dots, g_{\gamma_{i_m}}) = 
(g_{\gamma_{i_1}}, \dots, g_{\gamma_{i_e}}, \dots, g_{\gamma_{i_m}}) A,
\]
where $A = B + I_m \in M_{m,m}(\mathcal{O}_C)$. The matrix $B = (b_{i,j})$ has all entries zero except possibly in the $e$-th column, and $I_m$ is the identity matrix. Since $\gamma > \gamma_{i_e}$, it follows that $b_{e,e}$ is not a unit in $\mathcal{O}_C$. However, the diagonal entries of $A$ satisfy $a_{k,k} = 1$ for $k \ne e$, and $a_{e,e} = 1 + b_{e,e}$, where $b_{e,e}$ are elements of the maximal ideal of $\mathcal{O}_C$. Hence, $a_{e,e}$ is a unit in $\mathcal{O}_C$, and therefore $\det(A)$ is a unit in $\mathcal{O}_C$.
\vskip 0.5cm
By Nakayama's Lemma, this implies that $\{g_{\gamma_{i_1}}, \dots, g'_{\gamma_{i_e}}, \dots, g_{\gamma_{i_m}}\}$ is also a minimal generating set for $I_\lambda$. We may thus replace $g_{\gamma_{i_e}}$ by $g'_{\gamma_{i_e}}$ in the generating set.
\vskip 0.5cm
Continuing this process, we inductively eliminate all terms in $ g_{\gamma_{i_e}} $ corresponding to exponents in $ \Delta $. At each step, we replace a generator by an equivalent one with strictly smaller support, using the previously described transformation. Since $ \mathcal{O}_C $ is a complete local ring, this process converges in the $ \mathfrak{m}_C $-adic topology, yielding a limit generator. By the Lemma~\ref{if lammb distinct}, we have $g_{\gamma_{i_{e}}} = f_{\gamma_{i_{e}}}(\lambda)$.

If $\{i_{e}\}\neq \{j_{e}\}$,  let   $\gamma_{j_{s}}= \max_{e}\{\gamma_{j_{e}}\notin \{\gamma_{i_{k}}\}\}$ and $\gamma_{i_{t}}= \max_{k}\{\gamma_{i_{k}}\notin \{\gamma_{j_{e}}\}\}$. Then $s= t>1$. Assume $\gamma_{j_{s}}<\gamma_{i_{s}}$. Since $f_{\gamma_{i_{1}}}(\lambda),\dots,f_{\gamma_{i_{m}}}(\lambda)$ can be generated by  $f_{\gamma_{j_{1}}}(\lambda),\dots,f_{\gamma_{j_{m}}}(\lambda)$,   we have 
\[
(f_{\gamma_{i_{1}}}(\lambda),\dots,f_{\gamma_{i_{s}}}(\lambda),\dots,f_{\gamma_{i_{m}}}(\lambda)) = (f_{\gamma_{j_{1}}}(\lambda),\dots,f_{\gamma_{j_{s}}}(\lambda),\dots,f_{\gamma_{i_{m}}}(\lambda))A
\]
where  
\[
A = \left(\begin{matrix}
B & \alpha & 0    \\
C &\beta & I_{m-s}    \\
\end{matrix}\right)\in M_{m, m}(\mathcal{O}_{C}),
\]
and $I_{m-s}$ is the identity matrix in $ M_{m-s, m-s}(\mathcal{O}_{C})$. Since $\gamma_{j_{s}}<\gamma_{i_{s}}$, then entries of  $\alpha$ are in maximal ideal of $\mathcal{O}_{C}$. Then $det(A)= det (B,\alpha)$ is not an unit in $\mathcal{O}_{C}$. By Nakayama's lemma, $f_{\gamma_{i_{1}}}(\lambda),\dots,f_{\gamma_{i_{m}}}(\lambda)$ is not a  minimal system of generators of $I_{\lambda}$.  This contradicts
the hypothesis.  
\end{proof}

\begin{definition}\label{defCDeltai}
For $\underline{i} = \{i_1, \dots, i_m\} \subseteq \{1, \dots, n\}$, we define
\[
C^{[\Delta],\underline{i}} := \left\{ I_\lambda \in C^{[\Delta]} \;\middle|\; 
I_\lambda \text{ admits a generating set of size } m \text{ consisting of } f_{\gamma_{i_1}}(\lambda), \dots, f_{\gamma_{i_m}}(\lambda) \right\}.
\]
\end{definition}

\begin{proposition}\label{decomposition of C Delta m}

\item 
\begin{enumerate}
    \item[(i)] For $m \in \mathbb{Z}_{\geq 1}$, we have the stratification
    \[
    C^{[\Delta],m} = C^{[\Delta],\leq m} \setminus C^{[\Delta],\leq m-1},
    \]
    where $C^{[\Delta],\leq m} = \{ I \in C^{[\Delta]} \mid m(I) \leq m \}$.

    \item[(ii)] For each subset $\underline{i} \subseteq \{1, \dots, n\}$, the strata $C^{[\Delta],\underline{i}}$, $C^{[\Delta],\leq m}$, and $C^{[\Delta],m}$ are locally closed subvarieties of $C^{[\Delta]}$.

    \item[(iii)] For subsets $\underline{i}, \underline{j} \subseteq \{1, \dots, n\}$ with $\# \underline{i} = \# \underline{j} = m$, we have
    \[
    C^{[\Delta],\underline{i}} \cap C^{[\Delta],\underline{j}} = C^{[\Delta],\underline{i} \cap \underline{j}}.
    \]
       Furthermore,   denote     $\{ \underline{i} \subseteq \{1,\dots,n\} \mid  \# \underline{i} = m \} = \{ \underline{i}_{1},\dots, \underline{i}_{t}\}$  the set of subsets $\{1,\dots,n\}$ of size $m$ with $t = \binom{n}{m}$.  By the inclusion-exclusion principle, we have

    \begin{align*}
    [C^{[\Delta],\leq m}] 
    &= \left[\bigcup_{\substack{\underline{i} \subseteq \{1,\dots,n\} \\ \#\underline{i} = m}} C^{[\Delta],\underline{i}}\right] \\
    &= \sum_{\substack{\underline{i}}} [C^{[\Delta],\underline{i}}] 
       - \sum_{\substack{\underline{i},\underline{j} \\ \underline{i} < \underline{j}}} [C^{[\Delta],\underline{i} \cap \underline{j}}] 
       + \cdots 
       + (-1)^{t-1} [C^{[\Delta],\underline{i}_1 \cap \cdots \cap \underline{i}_t}] \\
    &= \sum_{\substack{\mathcal{I} \subseteq \{1,\dots,t\} \\ \mathcal{I} \neq \varnothing}} (-1)^{|\mathcal{I}|-1} [C^{[\Delta],\bigcap_{e \in \mathcal{I}} \underline{i}_e}] \quad \text{in } K_0(\mathrm{Var}_{\mathbf{C}}),
    \end{align*}
    where the ordering $\underline{i} < \underline{j}$ is lexicographic.
\end{enumerate}\end{proposition}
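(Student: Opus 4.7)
The plan is to separate the three claims and dispatch them in order. Part (i) is immediate from unpacking definitions: an ideal $I$ satisfies $m(I)=m$ if and only if $m(I)\leq m$ and $m(I)\nleq m-1$, so $C^{[\Delta],m}=C^{[\Delta],\leq m}\setminus C^{[\Delta],\leq m-1}$ directly from the definitions of the two strata on the right.

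For (ii), I would first show that each $C^{[\Delta],\underline{i}}$ is closed in $C^{[\Delta]}$. Using the embedding $C^{[\Delta]}\hookrightarrow \operatorname{Gen}_{\Delta}$, a point $I_{\lambda}$ belongs to $C^{[\Delta],\underline{i}}$ if and only if $f_{\gamma_{j}}(\lambda)\in\langle f_{\gamma_{i_{1}}}(\lambda),\dots,f_{\gamma_{i_{m}}}(\lambda)\rangle$ for every $j\in\{1,\dots,n\}\setminus\underline{i}$. Since $I_{\lambda}$ contains the conductor ideal $t^{c(\Delta)}\mathcal{O}_{C}$ and has fixed value set $\Delta$, this ideal membership can be tested in the finite-dimensional quotient $\mathcal{O}_{C}/t^{N}\mathcal{O}_{C}$ for any $N\geq c(\Delta)$, where it becomes the condition that the truncation of $f_{\gamma_{j}}(\lambda)$ lies in the image of the linear map $(h_{1},\dots,h_{m})\mapsto\sum_{k}h_{k}\bar f_{\gamma_{i_{k}}}(\lambda)$. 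This is a rank condition on a matrix whose entries are polynomials in $\lambda$, hence defines a Zariski-closed subvariety. Consequently $C^{[\Delta],\leq m}=\bigcup_{\#\underline{i}=m} C^{[\Delta],\underline{i}}$ is closed as a finite union of closed subsets, and $C^{[\Delta],m}=C^{[\Delta],\leq m}\setminus C^{[\Delta],\leq m-1}$ is locally closed as a difference of two closed subsets.

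The heart of (iii) is the set identity $C^{[\Delta],\underline{i}}\cap C^{[\Delta],\underline{j}}=C^{[\Delta],\underline{i}\cap\underline{j}}$. The containment $\supseteq$ is trivial, since enlarging a generating set preserves the property of being a generating set. For $\subseteq$, let $I_{\lambda}$ belong to both strata and put $m':=m(I_{\lambda})\leq m$. By Nakayama's lemma applied to the local ring $\mathcal{O}_{C}$, any generating set of $I_{\lambda}$ contains a minimal generating subset; extracting such a subset from $\{f_{\gamma_{i_{k}}}(\lambda)\}_{k=1}^{m}$ and invoking the uniqueness clause of Proposition \ref{minimal systen generator of I lamba}, the resulting index set $\underline{k}$ of cardinality $m'$ is forced to lie inside $\underline{i}$. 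Repeating the argument with $\underline{j}$ yields $\underline{k}\subseteq\underline{i}\cap\underline{j}$, so the family $\{f_{\gamma_{s}}(\lambda):s\in\underline{i}\cap\underline{j}\}$ already contains a minimal generating set of $I_{\lambda}$ and therefore generates it, i.e.\ $I_{\lambda}\in C^{[\Delta],\underline{i}\cap\underline{j}}$. Granted this identity, the inclusion-exclusion formula for $[C^{[\Delta],\leq m}]$ in $K_{0}(\mathrm{Var}_{\mathbf{C}})$ follows from additivity of the class map on locally closed decompositions, which is legitimate by (ii). The main obstacle I anticipate is the forward inclusion just described: one must ensure that the Nakayama extraction lands on precisely the unique index set supplied by Proposition \ref{minimal systen generator of I lamba} and not on some other minimal subset of the same cardinality; the constructibility step in (ii) is more routine but requires a clean reduction to the finite truncation $\mathcal{O}_{C}/t^{N}\mathcal{O}_{C}$ before the rank condition can be written down.
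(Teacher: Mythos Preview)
Your proof is essentially correct and, for parts (i) and (iii), follows the same line as the paper: (i) is unpacked from the definitions, and (iii) relies on extracting a minimal generating subset and invoking the uniqueness clause of Proposition~\ref{minimal systen generator of I lamba}. The paper phrases (iii) via the auxiliary stratification $C^{[\Delta],\underline{i}} = \bigsqcup_{k\leq m} C^{[\Delta],\underline{i},k}$ by the exact value of $m(I_\lambda)$, but the underlying mechanism is the same Nakayama-plus-uniqueness argument you give.

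For (ii) you take a genuinely different and more self-contained route: you argue directly that $C^{[\Delta],\underline{i}}$ is closed via a rank condition in a finite truncation, whereas the paper forward-references Theorem~\ref{decomposition of C Delta i} to obtain constructibility. Your approach is cleaner and avoids the forward reference; it also gives the stronger conclusion that $C^{[\Delta],\underline{i}}$ and $C^{[\Delta],\leq m}$ are closed (not merely locally closed) in $C^{[\Delta]}$.

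There is one small imprecision worth tightening. You justify the passage to $\mathcal{O}_C/t^N\mathcal{O}_C$ by saying that $I_\lambda$ contains $t^{c(\Delta)}\mathcal{O}_C$, but what you actually need is that the \emph{subideal} $J'=\langle f_{\gamma_{i_1}}(\lambda),\dots,f_{\gamma_{i_m}}(\lambda)\rangle$ contains $t^N\mathcal{O}_C$ for a uniform $N$ independent of $\lambda$; otherwise membership of $f_{\gamma_j}(\lambda)$ in $J'$ cannot be tested modulo $t^N$. This does hold: since $f_{\gamma_{i_1}}(\lambda)=t^{\gamma_{i_1}}u$ with $u$ a unit in $k[[t]]$, one checks that $t^N\mathcal{O}_C\subset \mathcal{O}_C\cdot f_{\gamma_{i_1}}(\lambda)\subset J'$ for any $N\geq \gamma_{i_1}+c$, where $c$ is the conductor of $\Gamma$; this bound is uniform in $\lambda$. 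With that correction your rank argument goes through.
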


\begin{proof}
Statement (i) is immediate from the definition of $C^{[\Delta],m}$ as the set of ideals requiring exactly $m$ generators.

For (ii), the fact that $C^{[\Delta],\underline{i}}$, $C^{[\Delta],\leq m}$, and $C^{[\Delta],m}$ are locally closed in $C^{[\Delta]}$ follows from Theorem~\ref{decomposition of C Delta i}, which establishes that the strata defined by generator support conditions are constructible and locally closed under the embedding $C^{[\Delta]} \hookrightarrow \operatorname{Gen}_{\Delta}$.

For (iii), let $m \in \mathbb{Z}_{\geq 1}$ and fix subsets $\underline{i}, \underline{j} \subseteq \{1, \dots, n\}$ with $\#\underline{i} = \#\underline{j} = m$. For $k \leq m$, define the subset
\[
C^{[\Delta],\underline{i},k} := \left\{ I_\lambda \in C^{[\Delta]} \;\middle|\; 
\begin{array}{c}
I_\lambda \text{ is generated by } f_{\gamma_{i_1}}(\lambda), \dots, f_{\gamma_{i_m}}(\lambda), \\
\text{and } m(I_\lambda) = k
\end{array}
\right\}.
\]
Then we have a stratification
\[
C^{[\Delta],\underline{i}} = \bigsqcup_{k \leq m} C^{[\Delta],\underline{i},k}.
\]
By Proposition~\ref{minimal systen generator of I lamba}, the minimal generating set of an ideal $I_\lambda$ is uniquely determined. Therefore, an ideal $I_\lambda$ lies in both $C^{[\Delta],\underline{i}}$ and $C^{[\Delta],\underline{j}}$ if and only if it can be minimally generated using elements indexed by $\underline{i} \cap \underline{j}$, after removing redundant generators.

It follows that
\[
C^{[\Delta],\underline{i}} \cap C^{[\Delta],\underline{j}} = \bigsqcup_{k \leq m} \left( C^{[\Delta],\underline{i},k} \cap C^{[\Delta],\underline{j},k} \right)
= \bigsqcup_{k \leq m} C^{[\Delta],\underline{i} \cap \underline{j},k}
= C^{[\Delta],\underline{i} \cap \underline{j}}.
\]
This completes the proof of (iii).
\end{proof}

Let $m \in \mathbb{Z}_{\geq 1}$. By Proposition~\ref{decomposition of C Delta m}, to compute the motivic class $[C^{[\Delta],m}]$ in the Grothendieck ring $K_0(\mathrm{Var}_{\mathbb{C}})$, it suffices to compute the classes $[C^{[\Delta],\underline{i}}]$ for all subsets $\underline{i} \subseteq \{1, \dots, n\}$ with $\# \underline{i} \leq m$.

\begin{theorem}\label{decomposition of C Delta i}
Let $\Gamma$ be the value semigroup associated with the germ of an irreducible curve singularity $(C,O)$.  Let $\Delta = (\gamma_1, \dots, \gamma_n)_\Gamma$, with $n > 1$, be a $\Gamma$-subsemimodule minimally generated by $n$ elements as a $\Gamma$-module.  Fix an integer $m$ such that $1 < m \leq n$, and let
\[
\underline{i} = \{i_1, \dots, i_m\} \subset \{1, \dots, n\}, \quad \{i'_1, \dots, i'_{n-m}\} = \{1, \dots, n\} \setminus \underline{i}.
\]
For each $e = 1, \dots, n-m$, choose an element in the Augmented Syzygy ( see Definition \ref{syzygy and aug syzygy} ) of $\Gamma-$ semimodule  $(\gamma_{i_1}, \dots, \gamma_{i_m}, \gamma_{i'_1}, \dots, \gamma_{i'_{e-1}})_{\Gamma}$:
\[
(\gamma_{i_{j_{1}}},\widehat{\gamma_{i_{j_{1}}}},  \sigma_{i_{j_e}}) \in \operatorname{ASyz}((\gamma_{i_1}, \dots, \gamma_{i_m}, \gamma_{i'_1}, \dots, \gamma_{i'_{e-1}})_{\Gamma})
\]
such that $\sigma_{i_{j_e}} < \gamma_{i'_e}$. 

Let $Y_{\underline{i}_{j_e}}$ be the subvariety defined by the conditions:
\begin{equation}\label{eq:Y_ie_conditions}
\sum_{k \in \{i_1, \dots, i_m, i_{1}^{\prime},\dots, i_{e-1}^{\prime}\}} (\mathcal{G}^{(e)}_{\lambda})_k \circ (\mathcal{S}^{(e)}_{\nu})_{i_{j_e}}^k \equiv 0 \mod t^{\gamma_{i'_e}}, 
\quad \text{and} \quad 
(Eq^{(e)})^{\gamma_{i'_e} - \sigma_{i_{j_e}}}_{i_{j_e}} \neq 0,
\end{equation}
where $(Eq^{(e)})^{\gamma_{i'_e} - \sigma_{i_{j_e}}}_{i_{j_e}}$ is the coefficient of $\phi_{\gamma_{i'_e}}$ in the expansion of the left-hand side, as defined in the proof.

Now define
\[
Y_{\underline{i}_{\underline{j}}} = \bigcap_{e=1}^{n-m} Y_{\underline{i}_{j_e}}.
\]
Then we have the following decomposition:
\[
C^{[\Delta],\leq m} = \bigcup_{\substack{\underline{i} \subseteq \{1,\dots,n\} \\ |\underline{i}| = m}} C^{[\Delta],\underline{i}} = \bigcup_{\substack{\underline{i} \subseteq \{1,\dots,n\} \\ |\underline{i}| = m}} \bigcup_{\underline{j}} Y_{\underline{i}_{\underline{j}}},
\]
where the second union runs over all tuples $\underline{j} = (j_1, \dots, j_{n-m})$ corresponding to valid tuples chosen in the Augmented Syzygy  as above.

For $\Gamma = \langle p, q \rangle$ with $\gcd(p,q) = 1$, we have:
\begin{equation}\label{eq:Yij_geometry}
Y_{\underline{i}_{\underline{j}}} \cong ({\mathbb{C}}^*)^{n - m} \times_{\operatorname{Spec} {\mathbb{C}}} \mathbb{A}^{N(\Delta) - n + m},
\end{equation}
where $N(\Delta) = \dim C^{[\Delta]}$ is the dimension of the stratum, and $C^{[\Delta]} \cong \mathbb{A}^{N(\Delta)}$ as established in Theorem~\ref{dim of Hilbert scheme}.

\end{theorem}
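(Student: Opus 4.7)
The plan is to establish the statement in three phases: a coarse decomposition via minimal generating sets, a refinement by syzygy choices, and an explicit geometric identification when $\Gamma=\langle p,q\rangle$.

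\emph{Phase 1: Coarse decomposition.} I would first show that $C^{[\Delta],\leq m}=\bigcup_{|\underline{i}|=m}C^{[\Delta],\underline{i}}$. Given any $I_\lambda\in C^{[\Delta],\leq m}$ with $m(I_\lambda)=k\leq m$, Proposition~\ref{minimal systen generator of I lamba} produces a unique subset $\underline{i}_0\subset\{1,\dots,n\}$ of size $k$ such that $\{f_{\gamma_j}(\lambda)\}_{j\in\underline{i}_0}$ is a minimal generating set. Since $I_\lambda=\langle f_{\gamma_1}(\lambda),\dots,f_{\gamma_n}(\lambda)\rangle$, any $f_{\gamma_j}(\lambda)$ with $j\notin\underline{i}_0$ is redundant, so $I_\lambda\in C^{[\Delta],\underline{i}}$ for every $\underline{i}\supseteq\underline{i}_0$ of cardinality $m$.

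\emph{Phase 2: Refinement by syzygies.} Next I would fix $\underline{i}=\{i_1<\dots<i_m\}$ with complement $\{i'_1<\dots<i'_{n-m}\}$ and characterize $C^{[\Delta],\underline{i}}$ inductively on $e$. An ideal $I_\lambda$ lies in $C^{[\Delta],\underline{i}}$ iff for every $e$, the element $f_{\gamma_{i'_e}}(\lambda)$ is contained in $\bigl\langle f_{\gamma_{i_1}}(\lambda),\dots,f_{\gamma_{i_m}}(\lambda),f_{\gamma_{i'_1}}(\lambda),\dots,f_{\gamma_{i'_{e-1}}}(\lambda)\bigr\rangle$. To produce a combination of these $m+e-1$ generators whose leading valuation is exactly $\gamma_{i'_e}$, one must exploit a syzygy of the partial semimodule $(\gamma_{i_1},\dots,\gamma_{i'_{e-1}})_\Gamma$ of order $\sigma_{i_{j_e}}<\gamma_{i'_e}$; for the deformed generators this combination is precisely $\sum_k(\mathcal{G}^{(e)}_\lambda)_k\circ(\mathcal{S}^{(e)}_\nu)^k_{i_{j_e}}$. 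Requiring the intermediate coefficients (at elements of $\Gamma$ lying strictly between $\sigma_{i_{j_e}}$ and $\gamma_{i'_e}$ and not in the partial semimodule) to vanish, and the coefficient at $\phi_{\gamma_{i'_e}}$ to be nonzero, is exactly the defining condition of $Y_{\underline{i}_{j_e}}$. Intersecting over $e$ and taking the union over all admissible tuples $\underline{j}$ then gives $C^{[\Delta],\underline{i}}=\bigcup_{\underline{j}}Y_{\underline{i}_{\underline{j}}}$.

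\emph{Phase 3: Geometry for $\Gamma=\langle p,q\rangle$.} Theorem~\ref{dim of Hilbert scheme} gives $C^{[\Delta]}\cong\mathbb{A}^{N(\Delta)}$ in explicit $\lambda,\nu$-coordinates, and Proposition~\ref{gamma = p,q, equal ASyz TSyz} identifies the syzygy data at each step with a unique element of $T_{\operatorname{Syz}}$. I would then invoke the Oblomkov--Rasmussen--Shende linear-independence principle already used in the proof of Theorem~\ref{piecewise fibration}: the linear parts $L^{(e,r)}_{i_{j_e}}$ of the equations defining $Y_{\underline{i}_{j_e}}$ remain jointly independent across all $e,r$, so the closed conditions can be used to eliminate a matching set of variables, while the $n-m$ non-vanishing conditions $(Eq^{(e)})^{\gamma_{i'_e}-\sigma_{i_{j_e}}}_{i_{j_e}}\neq 0$ single out $n-m$ independent distinguished coordinates constrained to be nonzero. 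Collecting the latter into a $(\mathbb{C}^*)^{n-m}$ factor and the remaining free parameters into $\mathbb{A}^{N(\Delta)-n+m}$ then delivers the product description~(\ref{eq:Yij_geometry}).

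The main obstacle is the joint linear-independence across the different partial semimodules $(\gamma_{i_1},\dots,\gamma_{i_m},\gamma_{i'_1},\dots,\gamma_{i'_{e-1}})_\Gamma$ appearing at successive steps: one must verify that the $\lambda,\nu$-variables involved in the new syzygy equations at step $e$ do not collide with those already fixed at earlier steps, and that the $n-m$ non-vanishing conditions correspond to pairwise distinct coordinates of $\mathbb{A}^{N(\Delta)}$. This rigidity is numerically captured by the bijection in Proposition~\ref{gamma = p,q, equal ASyz TSyz} and is expected to fail beyond the $\langle p,q\rangle$ case, consistent with the phenomena displayed in Example~\ref{fibration for 6 9 19}.
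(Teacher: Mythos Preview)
Your Phases~1 and~2 match the paper's argument closely: Proposition~\ref{minimal systen generator of I lamba} gives the coarse cover by the $C^{[\Delta],\underline{i}}$, and the inductive syzygy analysis producing the $Y_{\underline{i}_{j_e}}$ is exactly how the paper proceeds.

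Phase~3 has a gap. You frame the issue as ``joint linear independence'' of the various $L^{(e,r)}$ and non-collision of the $\lambda,\nu$ variables, and then assert that after eliminating variables via the closed conditions one is left with $N(\Delta)-(n-m)$ free parameters. But this count only works if the closed conditions $\sum_k(\mathcal{G}^{(e)}_\lambda)_k\circ(\mathcal{S}^{(e)}_\nu)^k_{i_{j_e}}\equiv 0\bmod t^{\gamma_{i'_e}}$ impose \emph{no new constraints on the $\lambda$-coordinates beyond those already defining $C^{[\Delta]}$}. That is what must be shown, and linear independence alone does not give it.

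The paper's mechanism is different: it compares, for each $r$, the equation $(Eq^{(e)})^r_{i_{j_e}}$ coming from the partial semimodule with the corresponding equation $(Eq^{\Delta})^r_{i_{j_e}}$ defining $C^{[\Delta]}$. These differ only when $r+\sigma_{i_{j_e}}\in\Delta\setminus(\gamma_{i_1},\dots,\gamma_{i_m},\gamma_{i'_1},\dots,\gamma_{i'_{e-1}})_\Gamma$: in that range $(Eq^\Delta)$ contains a free $\nu$-term while $(Eq^{(e)})$ does not, so the latter would genuinely constrain $\lambda$. The key combinatorial point---which your proposal does not supply---is that this range is empty for $\sigma_{i_{j_e}}<r+\sigma_{i_{j_e}}<\gamma_{i'_e}$: since the complement indices $i'_1<\dots<i'_{n-m}$ are ordered, every generator $\gamma_j<\gamma_{i'_e}$ already belongs to $\{\gamma_{i_1},\dots,\gamma_{i_m},\gamma_{i'_1},\dots,\gamma_{i'_{e-1}}\}$, so $\Delta_{>\sigma_{i_{j_e}},<\gamma_{i'_e}}\subset(\gamma_{i_1},\dots,\gamma_{i'_{e-1}})_\Gamma$. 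Hence the closed conditions are redundant with $C^{[\Delta]}$, and the only new constraint at step $e$ is the single open condition $(Eq^{(e)})^{\gamma_{i'_e}-\sigma_{i_{j_e}}}_{i_{j_e}}\neq 0$. This is why the dimension drops by exactly $n-m$ and the product description~\eqref{eq:Yij_geometry} follows.
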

\begin{proof}
Let $I_\lambda = (f_{\gamma_1}(\lambda), \dots, f_{\gamma_n}(\lambda)) \in C^{[\Delta],\leq m}$.  
By Proposition~\ref{minimal systen generator of I lamba}, we may assume that $I_\lambda$ can be generated by the set  
$$
\{f_{\gamma_{i_1}}(\lambda), \dots, f_{\gamma_{i_m}}(\lambda)\},
$$  
where $v(f_{\gamma_{i_e}}(\lambda)) = \gamma_{i_e}$ for each $e = 1, \dots, m$.  
In particular, this implies $I_\lambda \in C^{[\Delta],\underline{i}}$, where $\underline{i} = \{i_1, \dots, i_m\}$.

Note that there exists an element $g_{\gamma_{i^{\prime}_{1}}}$ in $I_{\lambda}$  with order $\gamma_{i_{1}^{\prime}}$ which can be generated by $f_{\gamma_{i_{1}}}(\lambda),\dots, f_{\gamma_{i_{m}}}(\lambda)$. 
Thus, there exists $(\gamma_{i_{j_{1}}},  \widehat{\gamma_{i_{j_{1}}}}, \sigma_{i_{j_{1}}})\in ASyz((\gamma_{i_{1}},\dots,\gamma_{i_{m}})_{\Gamma})$ with $\sigma_{i_{j_{1}}}<\gamma_{i^{\prime}_{1}}$.
Consider the tuple $(\gamma_{i_{j_{1}}}, \widehat{\gamma_{i_{j_{1}}}}, \sigma_{i_{j_{1}}})$. For any element $s$ in the set $((\gamma_{i_1}, \dots, \gamma_{i_m}){\Gamma}){ > \sigma_{i_{j_1}}, \leq \gamma_{i^{\prime}_{1}}}$, we choose a sub-decomposition from the decomposition that defines $C^{[\Delta]}$. We then write $s$ as $s = \gamma{g(s)} + \rho(s)$, where $\rho(s) \in \Gamma$. One can assign an $m\times 1$ matrix with entries 
\[(\mathcal{S}_{\nu}^{(1)})_{i_{j_{1}}}^{k}:=u_{i_{j_{1}}}^{k}\phi_{\sigma_{i_{j_{1}}}-\gamma_{k}}+\sum_{s\in ((\gamma_{i_{1}},\dots,\gamma_{i_{m}})_{\Gamma})_{>\sigma_{i_{j_{1}}} , \leq  \gamma_{i^{\prime}_{1}}}, g(s)=k}\nu_{i_{j_{1}}s}^{s-\sigma_{i_{j_{1}}}}\phi_{s-\gamma_{k}},\]
and a matrix with entries 
\[(\mathcal{G}^{(1)}_{\lambda})_{k}:=(\mathcal{G}^{\Delta}_{\lambda})_{k},\]
for $\gamma_{k} = \gamma_{i_{1}},\dots,\gamma_{i_{m}}.$  Thus, $g_{\gamma_{i^{\prime}_{1}}}$ can be generated by $f_{\gamma_{i_{1}}}(\lambda),\dots, f_{\gamma_{i_{m}}}(\lambda)$ if and only if  we have following equations:

\begin{equation}\label{C Delta i}
\sum_{k=i_{1}, \dots,i_{m}}(\mathcal{G}^{(1)}_{\lambda})_{k}\circ (\mathcal{S}^{(1)}_{v})_{i_{j_{1}}}^{k}=O(t^{\gamma_{i^{\prime}_{1}}-1}), \quad (Eq^{(1)})^{{\gamma_{i^{\prime}_{1}}-\sigma_{i_{j_{1}}}}}_{i_{j_{1}}}\phi_{\gamma_{i^{\prime}_{1}}}\neq 0,
\end{equation}
where $(Eq^{(1)})^{{\gamma_{i^{\prime}_{1}}-\sigma_{i_{j_{1}}}}}_{i_{j_{1}}}$ is the coefficient of $\phi_{\gamma_{i^{\prime}_{1}}}$  in the expansion of  $\sum_{k=i_{1}, \dots,i_{m}}(\mathcal{G}^{(1)}_{\lambda})_{k}\circ (\mathcal{S}^{(1)}_{v})_{i_{j_{1}}}^{k}$. 

We can eliminate the term of $g_{\gamma_{i^{\prime}_{1}}}$ in $(\gamma_{i_{1}},\dots,\gamma_{i_{m}},\gamma_{i^{\prime}_{1}})_{\Gamma}$ , then we define
\[(\mathcal{G}^{(2)}_{\lambda})_{i^{\prime}_{1}}:=\phi_{\gamma_{i^{\prime}_{1}}}+\sum_{k\in \Gamma_{>\gamma_{i^{\prime}_{1}}}\setminus (\gamma_{i_{1}},\dots,\gamma_{i_{m}},\gamma_{i^{\prime}_{1}})_{\Gamma})}\lambda_{i^{\prime}_{1}}^{k-\gamma_{i^{\prime}_{1}}} \phi_{k}.\]

There exists an element $g_{\gamma_{i^{\prime}_{2}}}$ in $I_{\lambda}$  with order $\gamma_{i_{2}^{\prime}}$ which can be generated by 
\[\{f_{\gamma_{i_{1}}}(\lambda),\dots, f_{\gamma_{i_{m}}}(\lambda),g_{\gamma_{i^{\prime}_{1}}}\}.\] 
Thus,  there exists $(\gamma_{i_{j_{2}}}, \widehat{\gamma_{i_{j_{2}}}}, \sigma_{i_{j_{2}}}) \in ASyz( (\gamma_{i_{1}},\dots,\gamma_{i_{m}},\gamma_{i^{\prime}_{1}})_{\Gamma} )$ with $\sigma_{i_{j_{2}}} < \gamma_{i^{\prime}_{2}}$ continues the process. 

Then $I_{\lambda}$ lies in the subvariety of $C^{[\Delta]}$ defined by those equations. Namely, 
\[ I_{\lambda} \in  \bigcap_{e=1}^{n-m} Y_{\underline{i}_{j_e}}
\]

For $\Gamma=\langle p,q \rangle$, analyzing $Y_{\underline{i}_{\underline{j}}}$ reduces to understanding how equations $(Eq^{(e)})^{r}_{i_{j_{e}}}$ constrain the $\lambda_{\bullet}^{\bullet}$  which defines the ideal $I_{\lambda}$ in $C^{[\Delta]}$.  We start by comparing $(Eq^{(1)})^{r}_{i_{j_{1}}}$, the coefficient of $\phi_{r+\sigma_{i_{j_{1}}}}$ in the equation $(\ref{C Delta i})$ and $(Eq^{
\Delta
})^{r}_{i_{j_{1}}}$, the coefficient of $\phi_{r+\sigma_{i_{j_{1}}}}$ in the equation defing $C^{[\Delta]}.$ \\

Note that $(\gamma_{i_{1}},\dots,\gamma_{i_{m}})_{\Gamma}\subset \Delta \subset \Gamma$. To simplify the notations, we denote by $*\lambda$ and $*v$ the non-zero linear term  in $(Eq^{(1)})^{r}_{i_{j_{1}}}$ ( and in $(Eq^{\Delta})^{r}_{i_{j_{1}}}$) corresponding to the parameters $\lambda_{\bullet}^{\bullet}$ and $v_{\bullet,\bullet}^{\bullet}$ separately.    This allows us to express $(Eq^{(1)})^{r}_{i_{j_{1}}}$ in the  following form: 
\[(Eq^{(1)})^{r}_{i_{j_{1}}}=\left\{\begin{aligned}
&*\lambda+*v + \mbox{not linear terms} ,  &\mbox{ if }r+\sigma_{i_{j_{1}}}\in (\gamma_{i_{1}},\dots,\gamma_{i_{m}})_{\Gamma},  \\    
&*\lambda + \mbox{not linear terms} , &\mbox{ if } r+\sigma_{i_{j_{1}}}\in \Delta\setminus (\gamma_{i_{1}},\dots,\gamma_{i_{m}})_{\Gamma},\\
&*\lambda + \mbox{not linear terms}    , & \mbox{ if } r+\sigma_{i_{j_{1}}}\in \Gamma\setminus \Delta.
\end{aligned}
\right.
\]
And $(Eq^{\Delta})^{r}_{i_{j_{1}}}$ is of form:
\[(Eq^{\Delta})^{r}_{i_{j_{1}}}=\left\{\begin{aligned}
&*\lambda+*v + \mbox{not linear terms} ,   &\mbox{ if }r+\sigma_{i_{j_{1}}}\in (\gamma_{i_{1}},\dots,\gamma_{i_{m}})_{\Gamma},  \\    
&*\lambda + *v +\mbox{not linear terms}     ,   &\mbox{ if }r+\sigma_{i_{j_{1}}}\in \Delta\setminus (\gamma_{i_{1}},\dots,\gamma_{i_{m}})_{\Gamma},\\
&*\lambda + \mbox{not linear terms}    ,   &\mbox{ if } r+\sigma_{i_{j_{1}}}\in \Gamma\setminus \Delta.
\end{aligned}
\right.
\]
By the comparison,  only the term $(Eq^{(1)})^{r}_{i_{j_{1}}}$, for $r+\sigma_{i_{j_{1}}}\in \Delta\setminus (\gamma_{i_{1}},\dots,\gamma_{i_{m}})_{\Gamma}$, create one more linear constraint on $\lambda_{\bullet}^{\bullet}$. However, $\Delta_{>\sigma_{i_{j_{1}}},< \gamma_{i_{1}^{\prime}}} \setminus (\gamma_{i_{1}},\dots,\gamma_{i_{m}})_{\Gamma}$ is empty set. In fact,  note that $\Delta_{>\sigma_{i_{j_{1}}},< \gamma_{i_{1}^{\prime}}} \subset (\gamma_{i_{1}},\dots,\gamma_{i_{m}})_{\Gamma}.$  Let $\gamma_{i_{s}}=\underset{{t=1,\dots, m}}{\max}\{\gamma_{i_{t}}\mid \gamma_{i_{t}}< \gamma_{i_{1}^{\prime}}\}$ and $\gamma_{i_{1}^{\prime}}$ is the minimal element not in $\{\gamma_{i_{1}},\dots,\gamma_{i_{m}}\}$.  Then 
$\gamma_{i_{1}}=\gamma_{1},\dots, \gamma_{i_{s-1}}=\gamma_{s-1}$ and  

\[\Delta_{>\sigma_{i_{j_{1}}},< \gamma_{i_{1}^{\prime}}} \subset (\gamma_{1},\dots, \gamma_{s})_{\Gamma}=(\gamma_{i_{1}},\dots, \gamma_{i_{s}})_{\Gamma}\subset(\gamma_{i_{1}},\dots, \gamma_{i_{m}})_{\Gamma}.\]

Then $\Delta_{>\sigma_{i_{j_{1}}},< \gamma_{i_{1}^{\prime}}} \setminus (\gamma_{i_{1}},\dots,\gamma_{i_{m}})_{\Gamma}$ is empty set. Hence, only the term $(Eq^{(1)})^{\gamma_{i_{1}^{\prime}}-\sigma_{i_{j_{1}}}}_{i_{j_{1}}}$ create one more constraint on $\lambda_{\bullet}^{\bullet}$.   Continue the process,  we obtain the formula. 
\end{proof}

\begin{example}

Let $\Gamma = \langle 4, 9 \rangle$,  
$\Delta = (12, 17, 22, 27)_{\Gamma}$,  
$\gamma_{\Delta} = 18$,  
$c(\Delta) = 24$,  
and $\mathrm{Syz}(\Delta) = (21, 26, 31, 36)_{\Gamma}$.

Let $\Delta_{\underline{i}} = (12, 17)_{\Gamma}$.  
We will compute the set $C^{[\Delta],\underline{i}}$.

\noindent
First, we compute $C^{[\Delta]}$: 
Note that $\Gamma\setminus\Delta = \{0,4,8,9,13,18\}.$
\[
\begin{aligned}
    f_{12}(\lambda) &= t^{12}+\lambda_{{1}}^{1}t^{13}+\lambda_{{1}}^{6}t^{18},\\
    f_{17}(\lambda) &= t^{17}+\lambda_{{2}}^{1}t^{18},\\
    f_{22}(\lambda) &= t^{22},\\
    f_{27}(\lambda) &= t^{27}.\\
\end{aligned}
\]

For $\sigma_{1}=21= 12+9=17+4$, consider $\Delta_{>21,\leq 22} = \{22\}$. Fix a decomposition of elements in $\Delta_{>21,\leq 22}$.
\[22=22+0\]
Define
\[
\begin{aligned}
    (S_{v})_{1}^{1} &= t^{9},\\
    (S_{v})_{1}^{2} &= -t^{4},\\
    (S_{v})_{1}^{3} &= v_{1,22}^1,\\
    (S_{v})_{1}^{4} &= 0.\\
\end{aligned}
\]
We have 
\[
(Eq^{\Delta})_2^{1}t^{22} = (\lambda_{1}^{1}-\lambda_{2}^1+v_{1,26}^1)t^{22}=O(t^{22}).
\]
Then $\sigma_{1}$ do not provide any constraint on $\lambda_{\bullet}^{\bullet}$. \\

For $\sigma_{2}=26>c(\Delta)$,  $26=22+4=17+9$. Then $\sigma_{2}$ does not provide any constraint on $\lambda_{\bullet}^{\bullet}$. 
However, we continue the process for comparing $(Eq^{\Delta})_{2}^{r}$ and $(Eq^{(2)})_{2}^{r}$. Fix a decomposition of elements in $\Delta_{>26,\leq 27}=\{27\}$.
\[27=27+0.\]
Define 
\[
\begin{aligned}
    (S_{v})_{2}^{1} &= 0\\
    (S_{v})_{2}^{2} &= t^{9},\\
    (S_{v})_{2}^{3} &= -t^{4},\\
    (S_{v})_{2}^{4} &= v^{1}_{2,27}.\\
\end{aligned}
\]
We have 
\[
(Eq^{\Delta})_2^{6}t^{27} =(\lambda_{2}^1+ v^{1}_{2,27})t^{27}=O(t^{22})    
\]
Hence,  $C^{[\Delta]}\cong \operatorname{Spec}{\mathbb{C}}[\lambda_{1}^{1},\lambda_{1}^{6},\lambda_{2}^{1}]$. \\

\noindent
Let $\Delta_{\underline{i}}=(12,17)_{\Gamma}.$ $Syz(12,17)_{\Gamma}=(21,44)_{\Gamma}$, $Syz(12,17,22)_{\Gamma}=(21,26,40)_{\Gamma}$. Recall that $C^{[\Delta],\underline{i}}=\cup Y_{\underline{i}_{\underline{j}}}$. However, there exists only one choice for $Y_{\underline{i}_{\underline{j}}}$: 
\[
\sigma_{i_{j_{1}}} = \sigma_1 = 21 < 22, \quad
\sigma_{i_{j_2}} = \sigma_2 = 26 < 27
\]
For 
$(12,17)_{>\sigma_1,\leq 22}=\{22\}$, define 
\[
\begin{aligned}
    (S_{v})_{1}^{1} &= t^{9}\\
    (S_{v})_{1}^{2} &= -t^{4}\\
\end{aligned}
\]
Then 
\[(Eq^{(1)})_{1}^{1}=(\lambda_{1}^{1}-\lambda_{2}^1)t^{22}.\]
Thus, $f_{22}(\lambda)$ can be generated by  $f_{12}(\lambda),f_{17}(\lambda)$ if and only if $\lambda_{1}^{1}-\lambda_{2}^1\neq 0$. \\

\noindent
For $(12,17,22)_{>\sigma_2,\leq 27}=\{27\}$, define
\[
\begin{aligned}
    (S_{v})_{1}^{1} &= 0\\
    (S_{v})_{1}^{2} &= t^{9}\\
    (S_{v})_{1}^{1} &= -t^{4}\\
\end{aligned}
\]
Then 
\[(Eq^{(2)})_{2}^{1}=\lambda_{2}^1t^{27}.\]
It follows that $I_{\lambda}\in C^{[\Delta],\underline{i}}$ if and only if $\lambda_{1}^{1}\neq \lambda_{2}^1$ and $\lambda_{2}^1\neq 0$. 
\end{example}

\begin{remark}
Assume $C^{[\Delta],\underline{i}}$ is a union of  $Y_{\underline{i}_{\underline{j_{g}}}}$ as described in  Theorem \ref{decomposition of C Delta i}, $\underline{i}_{\underline{j_{g}}}=\underline{i}_{\underline{j_{1}}}, \dots, \underline{i}_{\underline{j_{\eta}}}$. By inclusion-exclusion principle,  we have:
\[ [C^{[\Delta],\underline{i}}]=[\bigcup_{\underline{i}_{\underline{j_{g}}}=\underline{i}_{\underline{j_{1}}},\dots \underline{i}_{\underline{j_{\eta}}}}Y_{\underline{i}_{\underline{j_{g}}}}]= 
\sum_{\underline{j_{g}}}  [Y_{\underline{i}_{\underline{j_{g}}}}]-\sum_{\underline{i}_{\underline{j_{g}}},\underline{i}_{\underline{j_{f}}}, g<f}  [Y_{\underline{i}_{\underline{j_{g}}}}\cap Y_{\underline{i}_{\underline{j_{f}}}}]+\dots + (-1)^{(\eta-1)} [Y_{\underline{i}_{\underline{j_{1}}}}\cap \dots \cap Y_{\underline{i}_{\underline{j_{\eta}}}}]
\]
\end{remark}

As a corollary of Theorem~\ref{decomposition of C Delta i}, we derive a formula for the intersections of the sets $Y_{\underline{i}_{\underline{j_g}}}$.

In the following, we regard $\sigma_{\underline{i}_{\underline{j_{g}}}}=(\{\sigma_{i_{j_{g_{1}}}}\},\dots,\{\sigma_{i_{j_{g_{n-m}}}}\})$ as $(n-m)$-tuple of sets whose components are sets. We can define a union operator of two tuples of sets: \[\sigma_{\underline{i}_{\underline{j_{g}}}}\cup \sigma_{\underline{i}_{\underline{j_{f}}}}:=(\{\sigma_{i_{j_{g_{1}}}}\}\cup \{\sigma_{i_{j_{f_{1}}}}\} ,\dots,\{\sigma_{i_{j_{g_{n-m}}}}\}\cup \{\sigma_{i_{j_{f_{n-m}}}}\})\]

We define the cardinality of a tuple of sets as the sum of the cardinalities of its components, i.e., 
\[\#(\sigma_{\underline{i}_{\underline{j_{g}}}}\cup \sigma_{\underline{i}_{\underline{j_{f}}}}):=\sum_{e=1}^{n-m}\#(\{\sigma_{i_{j_{g_{e}}}}\}\cup \{\sigma_{i_{j_{f_{e}}}}\}). \]

\begin{corollary}
For the curve $C$ defined by $y^p = x^q$, we assume that $C^{[\Delta],\underline{i}}$ is a union of the sets $Y_{\underline{i}_{\underline{j_g}}}$ as described in Theorem~\ref{decomposition of C Delta i}, where $\underline{i}_{\underline{j_g}} = \underline{i}_{\underline{j_1}}, \dots, \underline{i}_{\underline{j_\eta}}$. Then, for $s \leq \eta$, we have:
\[
Y_{\underline{i}_{\underline{j}_{1}}} \cap  \dots \cap  Y_{\underline{i}_{\underline{j}_{s}}} \cong (\mathbb{C}^{*})^{\#(\sigma_{\underline{i}_{\underline{j}_{1}}} \cup \dots \cup \sigma_{\underline{i}_{\underline{j}_{s}}})} \times _{\mathbb{C}} \mathbb{A}^{N(\Delta) - \#(\sigma_{\underline{i}_{\underline{j}_{1}}} \cup \dots \cup \sigma_{\underline{i}_{\underline{j}_{s}}})}.
\]

\begin{proof}

We provide the verification only for the specific case of $Y_{\underline{i}_{\underline{j_1}}} \cap Y_{\underline{i}_{\underline{j_2}}}$. The proof for the general case follows by a similar argument.

 Recall that,    $Y_{\underline{i}_{\underline{j_{g}}}} = \cap Y_{\underline{i}_{j_{g_{e}}}}$, $g=1,2$. For $e=1,\dots ,n-m$,  we chose 
 
\[\sigma_{i_{j_{g_{e}}}}\in Syz((\gamma_{i_{1}},\dots, \gamma_{i_{m}}, \gamma_{i^{\prime}_{1}},\dots,  \gamma_{i^{\prime}_{e-1}})_{\Gamma})\quad \text{with }\sigma_{i_{j_{g_{e}}}}<\gamma_{i_{e}^{\prime}}.\]

 Then $Y_{\underline{i}_{j_{g_{e}}}}$ is defined by 
\begin{equation}\label{condition C Delat m}
\sum_{k=i_{1}, \dots,i_{m},i^{\prime}_{1},\dots i^{\prime}_{e-1}}(\mathcal{G}^{(e)}_{\lambda})_{k}\circ (\mathcal{S}^{(e)}_{v})_{i_{j_{g_{e}}}}^{k}=O(t^{\gamma_{i^{\prime}_{e}}-1}), \quad (Eq^{(e)})^{{\gamma_{i^{\prime}_{e}}-\sigma_{i_{j_{g_{e}}}}}}_{i_{j_{g_{e}}}}\phi_{\gamma_{i^{\prime}_{e}}}\neq 0,
\end{equation}
where $(Eq^{(e)})^{{\gamma_{i^{\prime}_{e}}-\sigma_{i_{j_{g_{e}}}}}}_{i_{j_{g_{e}}}}$ is the coefficient of $\phi_{\gamma_{i^{\prime}_{e}}}$  in the expansion of  
$\sum(\mathcal{G}^{(e)}_{\lambda})_{k}\circ (\mathcal{S}^{(e)}_{v})_{i_{j_{g_{e}}}}^{k}.$
In the following, we will analyze that  this additional constraint don't introduce any interdependence between the restrictions for $g=1,2:$

Considering, 
$\sigma_{\underline{i}_{\underline{j_{1}}}}=(\{\sigma_{i_{j_{1_{1}}}}\},\dots,\{\sigma_{i_{j_{1_{n-m}}}}\}),\sigma_{\underline{i}_{\underline{j_{2}}}}=(\{\sigma_{i_{j_{1_{2}}}}\},\dots,\{\sigma_{i_{j_{2_{n-m}}}}\})$.  
\begin{itemize}
\item  Case 1,  Comparing the same position of  $\sigma_{\underline{i}_{\underline{j_{1}}}}$ and $\sigma_{\underline{i}_{\underline{j_{2}}}}$. For some $e = 1,\dots, n-m$:
\begin{itemize}
\item Case1.1, $\sigma_{i_{j_{1_{e}}}} =\sigma_{i_{j_{2_{e}}}}$. They yield the same equation~\eqref{condition C Delat m}, i.e. $Y_{\underline{i}_{j_{1_{e}}}}=Y_{\underline{i}_{j_{2_{e}}}}.$
\item Case 1.2,   $\sigma_{i_{j_{1_{e}}}} \neq \sigma_{i_{j_{2_{e}}}}$. Since the linear part of  $(Eq^{(e)})^{{\gamma_{i^{\prime}_{e}}-\sigma_{i_{j_{g_{e}}}}}}_{i_{j_{g_{e}}}}$ are linearly independent for $g=1,2.$, then the corresponding constraints are independent. \\
\end{itemize}

\item  Case 2, Comparing the distinct position of  $\sigma_{\underline{i}_{\underline{j_{1}}}}$ and $\sigma_{\underline{i}_{\underline{j_{2}}}}$. For some distinct $e,f = 1,\dots, n-m$ with $e<f$:

\begin{itemize}
\item Case 2.1,  $\sigma = \sigma_{i_{j_{1_{e}}}} = \sigma_{i_{j_{2_{f}}}}.$  The interdependence yielded by the potential equations is that
\[(Eq^{(e)})^{{\gamma_{i^{\prime}_{e}}-\sigma}}_{i_{j_{1_{e}}}}\neq 0 , (Eq^{(f)})^{{\gamma_{i^{\prime}_{e}}-\sigma}}_{i_{j_{2_{f}}}}= 0. \]
We denote $*\lambda$ and $*v$ the not zero linear term  in the equation.  However,  $(Eq^{(e)})^{{\gamma_{i^{\prime}_{e}}-\sigma}}_{i_{j_{1_{e}}}}$ in the following form: 
\[*\lambda + \mbox{not linear terms}\]
Because $(\mathcal{G}^{(f)}_{\lambda})_{i^{\prime}_{e}}$  was involved in the creation of $(\mathcal{G}^{(f)}_{\lambda})_{i^{\prime}_{f}}$, then $(Eq^{(f)})^{{\gamma_{i^{\prime}_{e}}-\sigma}}_{i_{j_{2_{f}}}}$ in the following form: 
\[*\lambda + *v + \mbox{not linear terms}.\]
Hence, the corresponding constraints are independent.
\item Case 2.2,  $\sigma_{i_{j_{1_{e}}}} \neq  \sigma_{i_{j_{2_{f}}}}.$ The reason similar to the Case 1.2.
\end{itemize}
\end{itemize}
\end{proof}
\end{corollary}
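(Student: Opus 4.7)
The plan is to unwind the definition of each $Y_{\underline{i}_{\underline{j_g}}}$, then carry out a careful linear-independence analysis of the defining conditions across the different $g$'s, mimicking the position-by-position strategy used in Theorem~\ref{decomposition of C Delta i}. By construction $Y_{\underline{i}_{\underline{j_g}}}=\bigcap_{e=1}^{n-m} Y_{\underline{i}_{j_{g_e}}}$, and each $Y_{\underline{i}_{j_{g_e}}}$ is cut out inside $C^{[\Delta]}\cong\mathbb{A}^{N(\Delta)}$ by a pair: a linear-up-to-higher-order equation $(Eq^{(e)})^{r}_{i_{j_{g_e}}}=0$ for $r<\gamma_{i'_e}-\sigma_{i_{j_{g_e}}}$, and a non-vanishing condition $(Eq^{(e)})^{\gamma_{i'_e}-\sigma_{i_{j_{g_e}}}}_{i_{j_{g_e}}}\neq 0$ on the top coefficient. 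Therefore the intersection $\bigcap_{g=1}^{s} Y_{\underline{i}_{\underline{j_g}}}$ is carved out in $C^{[\Delta]}$ by collecting, for every pair $(g,e)$, one equation giving a closed condition and one non-vanishing condition defining an open locus. The goal is to prove that, up to the linearization argument available for $\Gamma=\langle p,q\rangle$, the non-vanishing linear forms are in bijection with the disjoint union counted by $\#(\sigma_{\underline{i}_{\underline{j_1}}}\cup\cdots\cup\sigma_{\underline{i}_{\underline{j_s}}})$.

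The first step is to reduce, as in the proof of Theorem~\ref{piecewise fibration}, the equations $(Eq^{(e)})^{r}_{i_{j_{g_e}}}$ to their linear parts $L^{(e),r}_{i_{j_{g_e}}}$; by the $(p,q)$ linear-independence statement (proof of \cite[Theorem~13]{oblomkov2018hilbert}), the defining ideal of $C^{[\Delta]}$ is analytically isomorphic to its linearization and all subvarieties considered here inherit this property. Once we are in the linear setting, I would fix $e$ and compare the conditions coming from the various $g$: if $\sigma_{i_{j_{g_e}}}=\sigma_{i_{j_{g'_e}}}$ the equations and non-vanishing conditions literally coincide, while if $\sigma_{i_{j_{g_e}}}\neq\sigma_{i_{j_{g'_e}}}$ the two linear forms involve a different top $\nu^{r}_{i_{j},\bullet}$ coordinate together with $\lambda$-terms that are again linearly independent by \cite[Theorem~13]{oblomkov2018hilbert}. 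This settles the contribution to $\#(\cdot)$ coming from distinct elements within a single position $e$.

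The second step handles the cross-position case $(g,e)$ versus $(g',f)$ with $e<f$. The key observation, already used in Theorem~\ref{decomposition of C Delta i}, is that in passing from position $e$ to position $f$ the data $(\mathcal{G}_\lambda^{(f)})_{i'_e}$ introduces a brand new generator with fresh parameters $\lambda^{\bullet}_{i'_e}$. Consequently, if $\sigma_{i_{j_{g_e}}}=\sigma_{i_{j_{g'_f}}}=\sigma$, the linear form at position $e$ has shape $\ast\lambda$ while the linear form at position $f$ has shape $\ast\lambda+\ast\nu+\ast\lambda^{\bullet}_{i'_e}$, involving variables that do not appear at level $e$; this forces independence of the two constraints. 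The case of unequal $\sigma$'s at distinct positions is even easier, being a direct consequence of linear independence at each level. Assembling these local observations, I obtain a family of linearly independent linear forms in the $(\lambda,\nu)$-coordinates whose cardinality is exactly $\#\bigl(\sigma_{\underline{i}_{\underline{j_1}}}\cup\cdots\cup\sigma_{\underline{i}_{\underline{j_s}}}\bigr)$.

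The conclusion is then formal: each such linear form must be nonzero (open condition), contributing a factor of $\mathbb{C}^{\ast}$, while the remaining $N(\Delta)-\#(\cdots)$ coordinates are unconstrained, yielding the fibred-product description $(\mathbb{C}^{\ast})^{\#(\cdots)}\times_{\operatorname{Spec}\mathbb{C}}\mathbb{A}^{N(\Delta)-\#(\cdots)}$. The main obstacle I anticipate is precisely the bookkeeping in the cross-position analysis: one must track exactly which $\lambda$- and $\nu$-variables are introduced when moving from level $e$ to level $f$, and verify that those newly introduced coordinates genuinely appear in the higher-level linear form with nonzero coefficient. This is the only place where the hypothesis $\Gamma=\langle p,q\rangle$ enters in an essential way, through the linear independence supplied by \cite[Theorem~13]{oblomkov2018hilbert}; once this is in hand the rest of the argument is a routine combinatorial counting of generators and syzygies below the relevant conductor thresholds.
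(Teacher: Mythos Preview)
Your proposal follows essentially the same strategy as the paper's proof: both reduce to linear parts via the $(p,q)$ linear-independence result from \cite{oblomkov2018hilbert}, then perform a case analysis comparing constraints at the same position $e$ (same $\sigma$ gives identical conditions, different $\sigma$ gives independent linear forms) versus at distinct positions $e<f$ (where the key point is that the level-$f$ equation acquires an extra $\nu$-term because $(\mathcal{G}^{(f)}_\lambda)_{i'_e}$ has already been adjoined, forcing independence from the level-$e$ equation). The paper carries this out explicitly only for $s=2$ and declares the general case analogous, whereas you frame it for arbitrary $s$ from the start, but the substance of the argument is the same.
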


\begin{example}
Consider a curve $C$  defined by $x^{11}=y^{6}$. Its valuation group is   $\Gamma = \langle 6,11 \rangle$ with conductor $c = 5\times 10=50.$  Let $\Delta = (30, 35, 40, 45, 50, 55)_{\Gamma}$ be a subsemimodule of $\Gamma$ with syzygy 
$Syz(\Delta) = (41, 46, 51, 56, 61, 66)_{\Gamma}.$ Note that \[
 \Gamma \setminus \Delta = \{0, 6, 11, 12, 17, 18, 22, 23, 24, 28, 29, 33,34,39,44\}.\] 
We chose $\underline{i}=\{1,2,3,4\},(\gamma_{i_{1}},\dots,\gamma_{i_{m}})_{\Gamma} = (30, 35, 40, 45)_{\Gamma} .$ Then 
\[\begin{aligned}
C^{[\Delta],\underline{i}} =\{&I_{\lambda}=(f_{30}(\lambda),f_{35}(\lambda),f_{40}(\lambda),f_{45}(\lambda),f_{50}(\lambda),f_{55}(\lambda))  \in C^{[\Delta]}\mid \\
&I_{\lambda} \mbox{ can be  generated by } f_{30}(\lambda),f_{35}(\lambda),f_{40}(\lambda),f_{45}(\lambda)\}.  
\end{aligned}\]
\[
\begin{aligned}
f_{30}(\lambda) &= t^{30} + \lambda^3_1 t^{30} + \lambda_1^4 t^{34} + \lambda_1^9 t^{39} + \lambda_1^{14} t^{44}\\
f_{35}(\lambda) &= t^{35} + \lambda_2^4 t^{39} + \lambda_2^9 t^{44}\\
f_{40}(\lambda) &= t^{40} + \lambda_3^4 t^{44}\\
f_{45}(\lambda) &= t^{45}\\
\end{aligned}
\]

Let \[
\sigma_{i_{j_{1}}} = \sigma_1 = 41 < 50, \quad
\sigma_{i_{j_2}} = \sigma_2 = 46 < 55
\]
and 
\[
\sigma_{i_{k_{1}}} = \sigma_2 = 46 < 50, \quad 
\sigma_{i_{k_2}} = \sigma_1 = 41 < 55.
\]
where $\sigma_1 = 41 = 30+11 = 35+6$ and $\sigma_2 = 46 = 35+11 = 40+6.$\\

Now, we compute $Y_{\underline{i}_{\underline{j}}}=Y_{i_{j_{1}}}\cap Y_{i_{j_{2}}}$, $Y_{\underline{i}_{\underline{k}}}=Y_{i_{k_{1}}}\cap Y_{i_{k_{2}}}$ and their  intersection $Y_{\underline{i}_{\underline{j}}}\cap Y_{\underline{i}_{\underline{k}}}.$\\

For  $Y_{\underline{i}_{\underline{j}}}$, we have 
$(\Delta_{\underline{i}})_{> \sigma_{i_{j_{1}}}, <50} = \{ 42, 45, 46, 47, 48\}.$ Fix a decomposition,  
\[
42 = 30 + 12, 45 = 45 + 0, 
46 = 40 + 6, 
47 = 30 + 17, 
48 = 30 + 18.
\]
Then we define:
\[
\begin{aligned}
(S_v^{(1)})_{\sigma_{i_{j_{1}}}}^1 &= t^{11} + v_{1, 42}^1 t^{12} + v_{1, 47}^6 t^{17} + v_{1, 48}^{7} t^{18},\\
(S_v^{(n)})_{\sigma_{i_{j_{1}}}}^2 &= -t^6,\\
(S_v^{(n)})_{\sigma_{i_{j_{1}}}}^3 &= v_{1,46}^5 t^6,\\
(S_v^{(1)})_{\sigma_{i_{j_{1}}}}^4 &= v_{1, 45}^4.
\end{aligned}
\]
Let $(\mathcal{G}_{\lambda}^{(1)})_{e}=f_{\gamma_{e}}(\lambda)$, $e=1,2,3,4.$ Then $Y_{i_{j_{1}}}$ is defined by 
\[\begin{aligned}
\sum_{e=1}^{4} (\mathcal{G}_{\lambda}^{(1)})_{e}\circ (S_{v}^{(1)})_{i_{j_{1}}}^{e}&=v_{1,42}^{1}t^{42}+\lambda_{1}^{3}t^{44}+(v_{1,45}^{4}-\lambda_{2}^4+\lambda_{1}^4+v_{1,42}^1)t^{45}+(v_{1,46}^{5}+\lambda_{1}^4 v_{1,42}^1)t^{46}\\
&+v_{1,47}^{6}t^{47}+v_{1,48}^{7}t^{48}=0
\end{aligned}\]
and $(\lambda_{1}^{9}-\lambda_{2}^{9}+\lambda_{1}^{3}v_{1,47}^{6}+\lambda_{3}^{4}v_{1,46}^{5})t^{50}\neq 0.$
It implies that $Y_{i_{j_{1}}}$ is defined by  $\lambda_{1}^{3}=0, \lambda_{1}^{9}-\lambda_{2}^{9}\neq 0.$\\

Similarly, $Y_{i_{j_{2}}}$ is defined by 

\[\begin{aligned}
\sum_{e=1}^{5} (\mathcal{G}_{\lambda}^{(2)})_{e}\circ (S_{v}^{(2)})_{i_{j_{2}}}^{e}&=v_{2,47}^{1}t^{47}+v_{2,48}^{1}t^{48}+(v^4_{2,50}+\lambda_{2}^{4}-\lambda_{3}^{4}+\lambda_{1}^{3}v_{2,47}^{1})t^{50}\\
&+v_{2,51}^{5}t^{51}+(v_{2,52}^{6}+\lambda_{1}^{4}v_{2,48}^{2})t^{52}+v_{2,53}^{7}t^{53}=0
\end{aligned}\]
and $\lambda_{2}^{9}t^{55}\neq 0.$ It implies that $Y_{i_{j_{2}}}$ is defined by  $\lambda_{2}^{9}\neq 0.$\\

Hence, $Y_{\underline{i}_{\underline{j}}}$ is defined by 
\begin{equation}  \label{Yij}
\lambda_{1}^{3}=0, \lambda_{1}^{9}-\lambda_{2}^{9}\neq 0, \lambda_{1}^{9}\neq 0. 
\end{equation}

For  $Y_{\underline{i}_{\underline{k}}}$,  $Y_{i_{k_{1}}}$ is defined by

\[\begin{aligned}
\sum_{e=1}^{4} (\mathcal{G}_{\lambda}^{(1)})_{e}\circ (S_{v}^{(1)})_{i_{k_{1}}}^{e}&=v_{2,47}^{1}t^{47}+v_{2,48}^{1}t^{48}=0, \quad (\lambda_{2}^{4}-\lambda_{3}^{4}+\lambda_{1}^{3}v_{2,47}^{1})t^{50}\neq 0 .
\end{aligned}\]

It implies that $Y_{i_{k_{1}}}$ is defined by  $\lambda_{2}^{4}-\lambda_{3}^{4}\neq 0.$\\

 $Y_{i_{k_{2}}}$ is defined by 

\[\begin{aligned}
\sum_{e=1}^{5} (\mathcal{G}_{\lambda}^{(2)})_{e}\circ (S_{v}^{(2)})_{i_{k_{2}}}^{e}&=v_{1,42}^{1}t^{42}+\lambda_{1}^{3}t^{44}+(v_{1,45}^{4}-\lambda_{2}^4+\lambda_{1}^4+v_{1,42}^1)t^{45}+(v_{1,46}^{5}+\lambda_{1}^4 v_{1,42}^1)t^{46}\\
&+v_{1,47}^{6}t^{47}+v_{1,48}^{7}t^{48}+(v_{1,50}^9+\lambda_{1}^{9}-\lambda_{2}^{9}+\lambda_{1}^{3}v_{1,47}^{6}+\lambda_{3}^{4}v_{1,46}^{5})t^{50}\\
&+(v_{1,51}^{10}+\lambda_{1}^3v_{1,48}^7+\lambda_{1}^4v_{1,47}^6+\lambda_{1}^{7}v_{1,42}^1)t^{51}+(v_{1,52}^{11}+\lambda_{1}^4v_{1,48}^7)t^{51}+v_{1,53}^{12}t^{53}+v_{1,54}^{13}t^{54}\\
&=0
\end{aligned}\]
and $\lambda_{1}^{14}t^{55}\neq 0$. It implies that $Y_{i_{k_{2}}}$ is defined by  $\lambda_{1}^{3}= 0,\lambda_{1}^{14}\neq 0 .$\\

Hence, $Y_{\underline{i}_{\underline{k}}}$ is defined by 
\begin{equation} \label{Yik}
\lambda_{2}^{4}-\lambda_{3}^{4}\neq 0,\lambda_{1}^{3}= 0,\lambda_{1}^{14}\neq 0.
\end{equation}

The intersection $Y_{\underline{i}_{\underline{j}}}\cap Y_{\underline{i}_{\underline{k}}}$ is defined by equations (\ref{Yij}) and (\ref{Yik}). The common constraint $\lambda_{1}^3=0$ arises from the definition of $I_{\lambda}$ lying in $C^{[\Delta]}$. The additional constraints do not introduce any interdependence between the conditions for $I_{\lambda}$ lying in $Y_{\underline{i}_{\underline{j}}}$ and the conditions for it lying in $Y_{\underline{i}_{\underline{k}}}$.
\end{example}

In \parencite[Section 5]{oblomkov2012hilbert}, let $C$ be the curve defined by $y^k = x^n$. The authors consider a $\mathbb{C}^*$-action on $C^{[\ell],m}$. By \parencite[Corollary 2]{bialynicki1973fixed}, only the fixed points contribute to the Euler characteristic, so 
\[
\chi(C^{[\ell],m}) = \chi\big((C^{[\ell],m})^{\mathbb{C}^*}\big).
\]
This simplifies the algebro-geometric side of Conjecture~\ref{conjecture 1} to the generating function for the sum over all monomial ideals, which is computed via a residue calculation. They then evaluate this residue to verify Conjecture~\ref{conjecture 1}.

We recover this simplification using the decomposition of $C^{[\Delta],\leq m}$ given in Theorem~\ref{decomposition of C Delta i}.

\begin{theorem}\cite{oblomkov2012hilbert}
For the curve $C$ defined by $y^k = x^n$, we have
\[
\int_{C^{[*]}} q^{2\ell} (1 - a^2)^m \, d\chi = \sum_{\substack{J \text{ monomial}}} q^{2 \dim_{\mathbb{C}} \mathcal{O}_{C}/J} (1 - a^2)^{m(J)}
\]    
\end{theorem}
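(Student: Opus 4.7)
The plan is to combine Theorem \ref{decomposition of C Delta i} with the additivity and multiplicativity of the compactly supported Euler characteristic $\chi$, exploiting the single key fact that $\chi(\mathbb{C}^{\ast})=0$. Any locally closed stratum carrying at least one $\mathbb{C}^{\ast}$-factor in its product decomposition will contribute nothing to $\chi$, and this will force $\chi(C^{[\Delta],m})$ to vanish except in one distinguished case.

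First I would fix $\Delta\in \mathscr{D}_{\ell}$ with $n=n(\Delta)$ minimal generators and work out $\chi(C^{[\Delta],\le m})$ for $m<n$. By Theorem \ref{decomposition of C Delta i} one has $C^{[\Delta],\le m}=\bigcup_{|\underline{i}|=m}\bigcup_{\underline{j}}Y_{\underline{i}_{\underline{j}}}$ with each $Y_{\underline{i}_{\underline{j}}}\cong (\mathbb{C}^{\ast})^{n-m}\times \mathbb{A}^{N(\Delta)-n+m}$; moreover, by the corollary that follows the theorem, every iterated intersection of such strata (for fixed $\underline{i}$ but varying $\underline{j}$, and more generally via $C^{[\Delta],\underline{i}}\cap C^{[\Delta],\underline{i}'}=C^{[\Delta],\underline{i}\cap \underline{i}'}$) is isomorphic to a product $(\mathbb{C}^{\ast})^{k}\times \mathbb{A}^{N(\Delta)-k}$ with $k\ge n-m\ge 1$. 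Two nested applications of inclusion-exclusion (first over $\underline{j}$ for fixed $\underline{i}$, then over $\underline{i}$) combined with $\chi(\mathbb{C}^{\ast})=0$ then give $\chi(C^{[\Delta],\le m})=0$ whenever $m<n(\Delta)$. On the other hand $C^{[\Delta],\le m}=C^{[\Delta]}\cong \mathbb{A}^{N(\Delta)}$ as soon as $m\ge n(\Delta)$, so $\chi(C^{[\Delta],\le m})=1$ in that range. The telescoping identity $\chi(C^{[\Delta],m})=\chi(C^{[\Delta],\le m})-\chi(C^{[\Delta],\le m-1})$ yields
\[
\chi(C^{[\Delta],m})=\begin{cases}1 & \text{if } m=n(\Delta),\\ 0 & \text{otherwise.}\end{cases}
\]

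Next I would package the answer in terms of monomial ideals. Summing over $\Delta$ via the stratification $C^{[\ell],m}=\bigsqcup_{\Delta\in \mathscr{D}_{\ell}}C^{[\Delta],m}$ gives $\chi(C^{[\ell],m})=\#\{\Delta\in \mathscr{D}_{\ell}\mid n(\Delta)=m\}$. For $C=\{y^{k}=x^{n}\}$ with $\gcd(k,n)=1$ one has $\mathcal{O}_{C}=\mathbb{C}[[t^{k},t^{n}]]$, and the assignment $\Delta \mapsto J_{\Delta}:=\langle t^{\gamma}\mid \gamma\in T_{\Delta}\rangle$ sets up a bijection between $\Gamma$-subsemimodules and monomial ideals of $\mathcal{O}_{C}$, with $\dim_{\mathbb{C}}\mathcal{O}_{C}/J_{\Delta}=\#(\Gamma\setminus \Delta)=\ell(\Delta)$ and $m(J_{\Delta})=n(\Delta)$. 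Substituting:
\[
\int_{C^{[\ast]}}q^{2\ell}(1-a^{2})^{m}\,d\chi=\sum_{\ell,m}q^{2\ell}(1-a^{2})^{m}\chi(C^{[\ell],m})=\sum_{\Delta}q^{2\ell(\Delta)}(1-a^{2})^{n(\Delta)}=\sum_{J \text{ monomial}}q^{2\dim_{\mathbb{C}}\mathcal{O}_{C}/J}(1-a^{2})^{m(J)}.
\]

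The main obstacle is the double inclusion-exclusion bookkeeping: one must verify that at every level of intersection the product structure $(\mathbb{C}^{\ast})^{k}\times \mathbb{A}^{\star}$ holds with $k\ge n-m$, so that the $\chi(\mathbb{C}^{\ast})=0$ argument applies uniformly. This is precisely the content of the refined intersection corollary following Theorem \ref{decomposition of C Delta i}, valid for $\Gamma=\langle p,q\rangle$, and careful unwinding of the definitions of $\sigma_{\underline{i}_{\underline{j}}}$ should suffice. An alternative, cleaner route would be to introduce the natural $\mathbb{C}^{\ast}$-action on $C^{[\Delta]}$ induced by $t\mapsto \tau t$ and observe that its fixed points inside $C^{[\Delta],m}$ are exactly the monomial ideals; the Bialynicki-Birula formula $\chi(X)=\chi(X^{\mathbb{C}^{\ast}})$ would then give the result directly and make the combinatorics superfluous.
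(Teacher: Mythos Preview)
Your proposal is correct and follows essentially the same approach as the paper: both use the $(\mathbb{C}^{\ast})^{n-m}$ factors from Theorem~\ref{decomposition of C Delta i} together with $\chi(\mathbb{C}^{\ast})=0$ to conclude $\chi(C^{[\Delta],m})=0$ for $m<n(\Delta)$ and $\chi(C^{[\Delta],n(\Delta)})=1$, then sum via the bijection between $\Gamma$-subsemimodules and monomial ideals. Your write-up is in fact more careful than the paper's on one point: the paper simply asserts that a union of torus bundles has vanishing $\chi$, whereas you correctly observe that one must invoke inclusion-exclusion and the intersection corollary to ensure every piece (including overlaps) carries a $\mathbb{C}^{\ast}$-factor; your alternative suggestion via the Bia{\l}ynicki-Birula fixed-point argument is exactly the original Oblomkov--Shende route that the paper is recovering here.
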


\begin{proof}
Let $\Delta$ be a subsemimodule of $\Gamma$ with $n(\Delta)$ minimal generators as a $\Gamma$-semimodule. For $m \leq n(\Delta)$, we have
\[
C^{[\Delta], m} = C^{[\Delta], \leq m} \setminus C^{[\Delta], \leq m-1}.
\]

For $m < n(\Delta)$, by Theorem~\ref{decomposition of C Delta i}, the variety $C^{[\Delta], m}$ is a union of trivial algebraic torus bundles. Since such bundles have Euler characteristic zero, it follows that
\[
\chi(C^{[\Delta], \leq m}) = 0 \quad \text{and} \quad \chi(C^{[\Delta], m}) = 0.
\]

Since $C^{[\Delta], \leq n(\Delta)} = C^{[\Delta]}$, and $C^{[\Delta]}$ is an affine space by Theorem~\ref{dim of Hilbert scheme}, we have $\chi(C^{[\Delta]}) = 1$. Therefore,
\[
\chi(C^{[\Delta], n(\Delta)}) = \chi(C^{[\Delta], \leq n(\Delta)}) - \chi(C^{[\Delta], \leq n(\Delta)-1}) = 1 - 0 = 1.
\]

Thus, we conclude:

\[\begin{aligned}
\int_{C^{[*]}} q^{2\ell} (1 - a^2)^m \, d\chi &= \sum_{\ell\geq 0, \Delta\in \mathscr{D}_{\ell }} \sum_{m\geq 1}q^{2\ell}(1 - a^2)^{m}\chi(C^{[\Delta],m})\\
&= \sum_{\ell\geq 0, \Delta\in \mathscr{D}_{\ell }} q^{2\ell}(1 - a^2)^{n}\chi(C^{[\Delta],n(\Delta)})\\
&= \sum_{\ell\geq 0, \Delta\in \mathscr{D}_{\ell }} q^{2\ell}(1 - a^2)^{n(\Delta)}.\\
\end{aligned}
\]            
The result follows from the fact that there exists a bijection from the set of monomial ideals of ${\mathbb{C}}[[t^k,t^n]]$ to the set of subsemimodules of $\Gamma$. 
\end{proof}

\begin{example}
We consider $C=\{y^3=x^7\}$,  its semigroup is $\Gamma= \langle 3, 7 \rangle$ with conductor $c=12$. 

\[
\begin{aligned}
Zm_{(C,O)}^{\mathrm{Hilb}}(a^2, q^2) 
&= \sum_{\ell \geq 0} \sum_{m \geq 1} q^{2\ell} (1 - a^2)^{m-1} [C^{[\ell],m}] \\
&= \sum_{\ell \geq 0} \sum_{\Delta \in \mathscr{D}_\ell} \sum_{m \geq 1} q^{2\ell} (1 - a^2)^{m-1} [C^{[\Delta],m}],
\end{aligned}
\]
\end{example}

The computation of $Zm_{(C,O)}^{\mathrm{Hilb}}$ proceeds by evaluating the sum layer-wise with respect to the index $\ell$:\\

For $\ell=0$:
\[
\sum_{m \geq 1} q^{2\ell} (1 - a^2)^{m-1} [C^{[\ell],m}] = 1.
\]

For $\ell=1$:
\[
\sum_{m \geq 1} q^{2\ell} (1 - a^2)^{m-1} [C^{[\ell],m}] = q^2(1 - a^2).
\]

For $\ell=2$:
\[
\sum_{m \geq 1} q^{2\ell} (1 - a^2)^{m-1} [C^{[\ell],m}] = (1 + \mathbb{L}) q^4(1 - a^2).
\]

For $\ell=3$:
\[
\sum_{m \geq 1} q^{2\ell} (1 - a^2)^{m-1} [C^{[\ell],m}] = q^6(1 - a^2)^2 + \mathbb{L} q^6(1 - a^2) + \mathbb{L}^2 q^6.
\]

For $\ell=4$:
\[
\sum_{m \geq 1} q^{2\ell} (1 - a^2)^{m-1} [C^{[\ell],m}] = q^8(1 - a^2)^2 + (\mathbb{L} + 2\mathbb{L}^2) q^8(1 - a^2).
\]

For $\ell=5$:
\[
\sum_{m \geq 1} q^{2\ell} (1 - a^2)^{m-1} [C^{[\ell],m}] = (1 + \mathbb{L}) q^{10}(1 - a^2)^2 + (2\mathbb{L}^2 + \mathbb{L}^3) q^{10}(1 - a^2).
\]

For $\ell=6$:
\[
\sum_{m \geq 1} q^{2\ell} (1 - a^2)^{m-1} [C^{[\ell],m}] = (1 + \mathbb{L}) q^{12}(1 - a^2)^2 + (\mathbb{L}^2 + \mathbb{L}^3) q^{12}(1 - a^2) + \mathbb{L}^4 q^{12}.
\]

For $\ell=7$:
\[
\sum_{m \geq 1} q^{2\ell} (1 - a^2)^{m-1} [C^{[\ell],m}] = (1 + \mathbb{L} + 2\mathbb{L}^2) q^{14}(1 - a^2)^2 + (2\mathbb{L}^3 + \mathbb{L}^4) q^{14}(1 - a^2) + 2\mathbb{L}^4 q^{14}.
\]

For $\ell=8$:
\[
\sum_{m \geq 1} q^{2\ell} (1 - a^2)^{m-1} [C^{[\ell],m}] = (1 + \mathbb{L} + 2\mathbb{L}^2) q^{16}(1 - a^2)^2 + (2\mathbb{L}^3 + 3\mathbb{L}^4) q^{16}(1 - a^2).
\]

For $\ell=9$:
\[
\sum_{m \geq 1} q^{2\ell} (1 - a^2)^{m-1} [C^{[\ell],m}] = (1 + \mathbb{L} + 2\mathbb{L}^2 + \mathbb{L}^3) q^{18}(1 - a^2)^2 + (3\mathbb{L}^3 + 2\mathbb{L}^4) q^{18}(1 - a^2) + \mathbb{L}^5 q^{18}.
\]

For $\ell=10$:
\[
\sum_{m \geq 1} q^{2\ell} (1 - a^2)^{m-1} [C^{[\ell],m}] = (1 + \mathbb{L} + 2\mathbb{L}^2 + \mathbb{L}^3) q^{20}(1 - a^2)^2 + (3\mathbb{L}^3 + 2\mathbb{L}^4 + \mathbb{L}^5) q^{20}(1 - a^2) + \mathbb{L}^5 q^{20}.
\]

For $\ell=11$:
\[
\sum_{m \geq 1} q^{2\ell} (1 - a^2)^{m-1} [C^{[\ell],m}] = (1 + \mathbb{L} + 2\mathbb{L}^2 + \mathbb{L}^3) q^{22}(1 - a^2)^2 + (3\mathbb{L}^3 + 2\mathbb{L}^4 + 2\mathbb{L}^5) q^{22}(1 - a^2).
\]

For $\ell=12$:
\[
\sum_{m \geq 1} q^{2\ell} (1 - a^2)^{m-1} [C^{[\ell],m}] = (1 + \mathbb{L} + 2\mathbb{L}^2 + \mathbb{L}^3) q^{24}(1 - a^2)^2 + (3\mathbb{L}^3 + 2\mathbb{L}^4 + 2\mathbb{L}^5) q^{24}(1 - a^2) + \mathbb{L}^6 q^{24}.
\]

In summary, we have 
\begin{align*}
Zm_{(C,O)}^{\mathrm{Hilb}}(a^2, q^2) 
 &= \sum_{\ell=0}^{12}  \sum_{m \geq 1} q^{2\ell} (1 - a^2)^{m-1} [C^{[\ell],m}]   +  \sum_{\ell> 12}  \sum_{m \geq 1} q^{2\ell} (1 - a^2)^{m-1} [C^{[\ell],m}]  \\
& = 1 \\
& + q^2 (1-a^2) \\
& + (1+\mathbb{L}) q^4 (1-a^2) \\
& + \left[ \mathbb{L}^2 + (1+\mathbb{L})(1-a^2) + (1-a^2)^2 \right] q^6 \\
& + \left[ (\mathbb{L} + 2\mathbb{L}^2)(1-a^2) + (1-a^2)^2 \right] q^8 \\
& + \left[ (2\mathbb{L}^2 + \mathbb{L}^3)(1-a^2) + (1+\mathbb{L})(1-a^2)^2 \right] q^{10} \\
& + \left[ \mathbb{L}^4 + (\mathbb{L}^2 + \mathbb{L}^3)(1-a^2) + (1+\mathbb{L})(1-a^2)^2 \right] q^{12} \\
& + \left[ 2\mathbb{L}^4 + (2\mathbb{L}^3 + \mathbb{L}^4)(1-a^2) + (1+\mathbb{L} + 2\mathbb{L}^2)(1-a^2)^2 \right] q^{14} \\
& + \left[ (2\mathbb{L}^3 + 3\mathbb{L}^4)(1-a^2) + (1+\mathbb{L} + 2\mathbb{L}^2)(1-a^2)^2 \right] q^{16} \\
& + \left[ \mathbb{L}^5 + (3\mathbb{L}^3 + 2\mathbb{L}^4)(1-a^2) + (1+\mathbb{L} + 2\mathbb{L}^2 + \mathbb{L}^3)(1-a^2)^2 \right] q^{18} \\
& + \left[ \mathbb{L}^5 + (3\mathbb{L}^3 + 2\mathbb{L}^4 + \mathbb{L}^5)(1-a^2) + (1+\mathbb{L} + 2\mathbb{L}^2 + \mathbb{L}^3)(1-a^2)^2 \right] q^{20} \\
& + \left[ (3\mathbb{L}^3 + 2\mathbb{L}^4 + 2\mathbb{L}^5)(1-a^2) + (1+\mathbb{L} + 2\mathbb{L}^2 + \mathbb{L}^3)(1-a^2)^2 \right] q^{22} \\
& + \left[ \mathbb{L}^6 + (3\mathbb{L}^3 + 2\mathbb{L}^4 + 2\mathbb{L}^5)(1-a^2) + (1+\mathbb{L} + 2\mathbb{L}^2 + \mathbb{L}^3)(1-a^2)^2 \right]( q^{24}+q^{26}+\dots )
\end{align*}

\section{On Monomial semigroups}\label{Appendix}

We recall the notion of monomial semigroups as introduced in \cite{pfister1992reduced}. 
\begin{definition}[\cite{pfister1992reduced}]
A monomial curve singularity over ${\mathbb{C}}$ is an irreducible curve singularity with local ring isomorphic to $A={\mathbb{C}}[[t^{a_{1}},\dots,t^{a_{m}}]]$ with $\operatorname{\gcd}(a_{1},\dots,a_{m})=1$. 
\end{definition}

\begin{definition}[\cite{pfister1992reduced}]
A semigroup $\Gamma\subset\mathbb{N}$ is called monomial if $0\in \Gamma$, $\#(\mathbb{N}\setminus\Gamma)<\infty$ and each reduced and irreducible curve singularity with semigroup $\Gamma$ is a monomial curve singularity. 
\end{definition}

\begin{proposition}[\cite{pfister1992reduced}]
For a semigroup $\Gamma \subset \mathbb{N}$, the following are equivalent:

\begin{enumerate}
    \item[(1)] $\Gamma$ is a monomial semigroup.
    
    \item[(2)] $\Gamma$ is of one of the following forms:
    \begin{enumerate}
        \item[(i)] $\Gamma_{m,s,b} := \{ im \mid i = 0,1,\dots,s \} \cup [sm + b, \infty)$, where $1 \leq b < m$ and $s \geq 1$,
        
        \item[(ii)] $\Gamma_{m,r} := \{0\} \cup [m, m+r-1] \cup [m+r+1, \infty)$, where $2 \leq r \leq m-1$,
        
        \item[(iii)] $\Gamma_m := \{0, m\} \cup [m+2, 2m] \cup [2m+2, \infty)$, where $m \geq 3$.
    \end{enumerate}
    
    \item[(3)] $0 \in \Gamma$, $\#(\mathbb{N} \setminus \Gamma) < \infty$, and for every $x \in \mathbb{N} \setminus \Gamma$, define
    \[
    c(x) := \min\left\{ n \in \mathbb{N} \mid [n, \infty) \subset \Gamma \cup (x + \Gamma) \right\}.
    \]
    Then the condition
    \[
    \Gamma \cap (x + \Gamma) \subset [c(x), \infty)
    \]
    holds.
\end{enumerate}
\end{proposition}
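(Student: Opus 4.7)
The plan is to establish the equivalence via the cycle $(3) \Rightarrow (2) \Rightarrow (1) \Rightarrow (3)$, which cleanly separates the combinatorial content from the deformation-theoretic content. The combinatorial directions $(3) \Rightarrow (2)$ and the normal-form direction $(2) \Rightarrow (1)$ can be handled by explicit case analysis on the three enumerated families, while the harder direction $(1) \Rightarrow (3)$ is proved by contraposition, using a failure of the inclusion condition to build a non-monomial deformation.

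For $(3) \Rightarrow (2)$, the approach is a careful case analysis driven by the multiplicity $m := \min(\Gamma \setminus \{0\})$. First, if $\Gamma = \{0\} \cup [m,\infty)$ we land immediately in the family $\Gamma_{m,s,b}$ with $s=1$. Otherwise let $x$ be a gap with $x > m$. The condition $\Gamma \cap (x+\Gamma) \subset [c(x),\infty)$ applied with $m \in \Gamma$ forces $x + m \geq c(x)$, and analogous constraints applied to every other element of $\Gamma$ below $c(x)$ propagate rigid arithmetic restrictions on the positions of the remaining gaps. A bookkeeping argument on the number and distribution of gaps in the successive intervals $[km, (k+1)m]$ then cuts down the possibilities to the three tabulated families $\Gamma_{m,s,b}$, $\Gamma_{m,r}$, and $\Gamma_m$.

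For $(2) \Rightarrow (1)$, I would proceed family by family, in each case producing a polynomial change of parameter in $t$ that eliminates all non-monomial terms from any $A \subset \mathbb{C}[[t]]$ with $v(A \setminus \{0\}) = \Gamma$. For $\Gamma_{m,s,b}$, which has the two coprime minimal generators $m$ and $sm+b$, this reduces to the classical equisingular normal form theorem giving $A \cong \mathbb{C}[[t^m, t^{sm+b}]]$. For $\Gamma_{m,r}$ and $\Gamma_m$, the narrow gap structure limits the higher-order perturbations of each generator to a small explicit set of admissible exponents, and each such perturbation can be absorbed by an explicit automorphism of $\mathbb{C}[[t]]$; the computation is ad hoc but short in each case.

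For $(1) \Rightarrow (3)$, I would prove the contrapositive. Suppose (3) fails: there exist a gap $x$ and $\gamma, \gamma' \in \Gamma$ with $\gamma = x + \gamma'$ and $\gamma < c(x)$. The plan is to build a one-parameter family of rings obtained by perturbing a single generator of the monomial curve $\mathbb{C}[[t^\Gamma]]$, typically $t^{\gamma'} \rightsquigarrow t^{\gamma'} + s\,t^{\gamma' + x}$, and to show that the resulting $A_s$ has value semigroup $\Gamma$ for all $s$ (using $\gamma = \gamma' + x < c(x)$ to control the new elements that could appear), while for generic $s$ no automorphism of $\mathbb{C}[[t]]$ can re-monomialize $A_s$. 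The existence of such an $A_s$ contradicts (1). The main obstacle will be precisely this last point: verifying analytic non-isomorphism of $A_s$ and $A_0$ requires tracking the $\mathrm{Aut}(\mathbb{C}[[t]])$-action on the tangent space of equisingular deformations and exhibiting that the chosen perturbation represents a non-trivial class modulo reparameterizations; a secondary delicacy is ensuring, in the $(3) \Rightarrow (2)$ classification, that the sporadic family $\Gamma_m$ is not absorbed into or overlooked among the generic two-parameter families.
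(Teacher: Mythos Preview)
The paper does not prove this proposition at all: it is stated with a citation to \cite{pfister1992reduced} and no proof is given. There is therefore nothing in the paper to compare your proposal against; the result is simply imported from Pfister--Steenbrink as background for Section~\ref{Appendix}.

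Your outline is a plausible sketch of the cycle $(3)\Rightarrow(2)\Rightarrow(1)\Rightarrow(3)$, and the division of labor you describe (combinatorics for $(3)\Rightarrow(2)$, explicit reparameterizations for $(2)\Rightarrow(1)$, construction of a non-trivial deformation for $\neg(3)\Rightarrow\neg(1)$) matches the structure of the original argument in Pfister--Steenbrink. If you want to flesh this out, the honest obstacles are exactly the two you flag: in $(1)\Rightarrow(3)$, showing that the perturbed ring $A_s$ is genuinely not analytically isomorphic to the monomial one requires more than just exhibiting the deformation---you need an invariant (or a careful analysis of the $\mathrm{Aut}(\mathbb{C}[[t]])$-orbit) that distinguishes them; and in $(3)\Rightarrow(2)$, the case analysis is longer than your paragraph suggests, since one has to rule out all other gap configurations systematically. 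But since the paper treats this as a black box, no proof is expected here.
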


We have the following lemma, which is implicit in the proof of \cite[Theorem 11]{pfister1992reduced}:

\begin{lemma}\label{c(x)+y}
For $y\in \mathbb{Z}$ and $x\in \mathbb{N}\setminus \Gamma$, then $c(x)+y= \min \{m\in \mathbb{N}\mid [m,\infty) \subset  (\Gamma+y)\cup(x+y+\Gamma)\}$. 

\end{lemma}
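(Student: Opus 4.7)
The plan is to reduce the statement to the translation invariance of tail-inclusions on $\mathbb{Z}$. The key observation is that for any integer $n$, the inclusion
\[
[n, \infty) \subset \Gamma \cup (x + \Gamma)
\]
is equivalent to
\[
[n + y, \infty) \subset (\Gamma + y) \cup (x + y + \Gamma),
\]
since the map $k \mapsto k + y$ is a bijection on $\mathbb{Z}$ that carries the left-hand sets onto the right-hand sets.

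Using this equivalence, I would proceed in two short steps. First, applied to $n = c(x)$, which by definition satisfies $[c(x), \infty) \subset \Gamma \cup (x+\Gamma)$, the equivalence gives the inclusion
\[
[c(x) + y, \infty) \subset (\Gamma + y) \cup (x + y + \Gamma),
\]
showing that $c(x) + y$ is an admissible value for the minimum on the right-hand side. Second, for the minimality, note that by the very definition of $c(x)$ as a minimum, we have $c(x) - 1 \notin \Gamma \cup (x + \Gamma)$ (in the case $c(x) \geq 1$), whence $c(x) + y - 1 \notin (\Gamma + y) \cup (x + y + \Gamma)$. Thus no smaller integer than $c(x) + y$ can serve as the start of a tail contained in $(\Gamma + y) \cup (x + y + \Gamma)$.

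The only mildly delicate point is the discrepancy between minimising over $\mathbb{N}$ and over $\mathbb{Z}$. Since $\Gamma$ is a numerical semigroup contained in $\mathbb{N}$ and $x \geq 0$, the set $(\Gamma + y) \cup (x + y + \Gamma)$ is contained in $[y, \infty)$, so any tail $[m, \infty)$ it contains satisfies $m \geq y$; combined with the natural assumption that $c(x) + y \geq 0$ in the contexts in which the lemma is applied (in particular whenever $y \in \Gamma$, as in the cited proof of \cite[Theorem~11]{pfister1992reduced}), the minimum over $\mathbb{N}$ coincides with the minimum over $\mathbb{Z}$. This matching is the one routine verification the argument requires; no further obstacle is anticipated.
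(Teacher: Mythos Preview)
Your proposal is correct and follows essentially the same translation argument as the paper: both show the inclusion $[c(x)+y,\infty)\subset(\Gamma+y)\cup(x+y+\Gamma)$ by shifting, and both establish minimality by shifting back and invoking the minimality of $c(x)$ (you do it directly via $c(x)-1$, the paper by contradiction). Your extra care about the $\mathbb{N}$ versus $\mathbb{Z}$ discrepancy is a point the paper simply glosses over.
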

\begin{proof}
By definition, we have
\[
[c(x) + y, \infty) \subset (\Gamma \cup (x + \Gamma)) + y = (\Gamma + y) \cup (x + y + \Gamma).
\]

Suppose there exists $c_0 < c(x) + y$ such that
\[
[c_0, \infty) \subset (\Gamma + y) \cup (x + y + \Gamma).
\]
Then, subtracting $y$, we obtain
\[
[c_0 - y, \infty) \subset \Gamma \cup (x + \Gamma).
\]
Since $c_0 - y < c(x)$, this contradicts the minimality of $c(x)$. Therefore, no such $c_0$ can exist.

\end{proof}

\begin{corollary} Let $\Delta$ be a $\Gamma$-subsemimodule. Suppose $\gamma_{1}$ and $\gamma_{2}$ are two generators of $\Delta$ with $\gamma_{1}>\gamma_{2}$. Then, for any $\sigma\in Syz((\gamma_{1},\gamma_{2})_{\Gamma})$,  we have $\sigma\geq c(\Delta)$. 
\begin{proof}
Since $x=\gamma_{1}-\gamma_{2}\notin \Gamma$,  we have 
\[Syz(\left \langle\gamma_{1},\gamma_{2}\right\rangle)=(\Gamma+\gamma_{1})\cap (\Gamma+\gamma_{2})= (\Gamma\cap (\Gamma+x))+\gamma_{2} \subset [c(x),\infty)+\gamma_{2}=[c(x)+\gamma_{2},\infty).\]

Applying  lemma \ref{c(x)+y}, we get:  $c(x)+\gamma_{2} = c((\gamma_{1},\gamma_{2})_{\Gamma})$. Thus, for any  $\sigma\in Syz((\gamma_{1},\gamma_{2})_{\Gamma})$,  we conclude that: $\sigma \geq  c((\gamma_{1},\gamma_{2})_{\Gamma})\geq c(\Delta)$. 
\end{proof}
\label{syz large that c in monomial semigroup}
\end{corollary}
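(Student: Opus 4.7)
The plan is to reduce the bound to the defining property of monomial semigroups by translating syzygies by $-\gamma_2$, which brings the conductor function $c(x)$ into direct play. First I would rewrite the syzygy module via the identity
\[
\operatorname{Syz}((\gamma_1,\gamma_2)_{\Gamma}) = (\gamma_1+\Gamma)\cap (\gamma_2+\Gamma) = \gamma_2 + \bigl(\Gamma \cap (x+\Gamma)\bigr),
\]
where $x := \gamma_1 - \gamma_2$. I then need to check that $x \in \mathbb{N}\setminus \Gamma$: the difference is a positive integer because $\gamma_1>\gamma_2$, and if $x$ were in $\Gamma$ then $\gamma_1=\gamma_2+x \in \gamma_2+\Gamma$, contradicting the hypothesis that $\gamma_1$ and $\gamma_2$ belong to a minimal generating system of $\Delta$.

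Next I would invoke the characterization of monomial semigroups recalled at the start of Section~\ref{Appendix}: for every $x \in \mathbb{N}\setminus \Gamma$ one has $\Gamma \cap (x+\Gamma) \subseteq [c(x),\infty)$. Translating by $+\gamma_2$, this yields $\operatorname{Syz}((\gamma_1,\gamma_2)_{\Gamma}) \subseteq [c(x)+\gamma_2,\infty)$. Then Lemma~\ref{c(x)+y} identifies the threshold $c(x)+\gamma_2$ with the conductor $c((\gamma_1,\gamma_2)_{\Gamma})$ of the two-generator submodule, since the lemma rewrites $c(x)+\gamma_2$ exactly as the minimal $m$ for which $[m,\infty)\subseteq (\gamma_2+\Gamma)\cup (\gamma_1+\Gamma)$.

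Finally, the inclusion $(\gamma_1,\gamma_2)_{\Gamma}\subseteq \Delta$ forces $c(\Delta) \leq c((\gamma_1,\gamma_2)_{\Gamma})$, because any tail of $\mathbb{N}$ eventually contained in the smaller submodule is a fortiori contained in the larger one. Chaining the two inequalities gives $\sigma \geq c((\gamma_1,\gamma_2)_{\Gamma}) \geq c(\Delta)$, which is the claim. I do not expect any serious obstacle here; the only subtle point is the reduction step $x\notin \Gamma$, which uses minimality of $\gamma_1,\gamma_2$ inside $\Delta$ rather than inside the auxiliary submodule $(\gamma_1,\gamma_2)_{\Gamma}$ itself, together with the fact that the structural inequality $\Gamma \cap (x+\Gamma)\subseteq [c(x),\infty)$ is available only in the monomial setting and is precisely what makes the argument fail for general semigroups.
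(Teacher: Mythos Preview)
Your proposal is correct and follows essentially the same approach as the paper: rewrite the syzygy set as $\gamma_2 + (\Gamma \cap (x+\Gamma))$ with $x=\gamma_1-\gamma_2$, apply the monomial-semigroup inclusion $\Gamma\cap(x+\Gamma)\subseteq [c(x),\infty)$, identify $c(x)+\gamma_2$ with $c((\gamma_1,\gamma_2)_\Gamma)$ via Lemma~\ref{c(x)+y}, and conclude by comparing conductors. In fact you spell out two points the paper leaves implicit, namely why $x\notin\Gamma$ (from minimality of the generators) and why $c(\Delta)\le c((\gamma_1,\gamma_2)_\Gamma)$ (from the inclusion of submodules).
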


\textbf{Acknowledgments.} The authors are grateful to Eugene Gorsky for discussions related to this work. We extend our deep gratitude to Laurent Evain and Evelia Rosa Garc\'ia Barroso for their meticulous review of the manuscript; their insightful comments were invaluable in refining the final version. The second author was partially supported by he ANR-SINTROP. The third author also acknowledges financial support from the Guangzhou Elites Sponsorship Council (China) through the Overseas Study Program of the Guangzhou Elite Project.

\medskip
\printbibliography
\end{document}